\colorlet{darkblue}{blue!50!black}
 \newcommand{\aA}{\mathfrak{a}}
\newcommand{\ty}{\infty}
\newcommand{\FF}{{\cal F}}
\newcommand{\D}{{\mathfrak D}}
\newcommand{\lag}{\langle}
\newcommand{\rag}{\rangle}
\newcommand{\MM}{{\cal M}}
\newcommand{\VV}{{\cal V}}
\newcommand{\KK}{{\cal K}}
\newcommand{\IP}{{\mathbb P}}
\newcommand{\pP}{{\mathbb P}}
\newcommand{\E}{{\mathbb E}}
 \newcommand{\EE}{{\cal E}}
\newcommand{\la}{\lambda}
\newcommand{\dd}{\,{\textup d}}
\newcommand{\ddd}{{\textup d}}
\newcommand{\Osc}{\mathop{\rm Osc}\nolimits}
\newcommand{\wwww}{{\mathfrak w}}
\newcommand{\vk}{{\varkappa}}
\newcommand{\R}{{\mathbb R}}
\newcommand{\I}{{\mathbb I}}
\newcommand{\W}{{\mathcal W}}
\newcommand{\CC}{{\cal C}}
\newcommand{\Q}{{\mathbb Q}}
\newcommand{\bbar}{\boldsymbol{|}}
\newcommand{\UU}{{\cal U}}
\newcommand{\N}{{\mathbb N}}
\newcommand{\z}{\mathfrak{z}}
\newcommand{\RRR}{{\mathcal R}}
\newcommand{\BBBBB}{{\mathcal B}}
\newcommand{\SSSS}{{\mathfrak S}}
\newcommand{\Zz}{\mathfrak{z}}
\newcommand{\be}{\begin{equation}}
\newcommand{\ee}{\end{equation}}
\newcommand{\ba}{\begin{align}}
\newcommand{\ea}{\end{align}}
\newcommand{\bp}{\begin{proof}}
\newcommand{\ep}{\end{proof}}
\newcommand{\bi}{\begin{itemize}}
\newcommand{\ei}{\end{itemize}}
\newcommand{\om}{\omega}
\newcommand{\uu}{\mathfrak{u}}
\newcommand{\vv}{\mathfrak{v}}
\newcommand{\BBB}{\mathfrak B}
\newcommand{\ees}{{\cal E}}
\newcommand{\h}{{\cal H}}
\newcommand{\elll}{{\cal L}}
\newcommand{\ppp}{{\cal P}}
\newcommand{\PPPP}{\mathfrak P}
 \newcommand{\e}{\mathbb{E}}
\newcommand{\nn}{\mathbb{N}}
\newcommand{\pp}{\mathbb{P}}
\newcommand{\rr}{\mathbb{R}}
\newcommand{\kp}{\varkappa}
\newcommand{\q}{\quad}
\newcommand{\f}{\frac}
\newcommand{\lm}{\lambda}
\newcommand{\p}{\partial}
\newcommand{\ph}{\varphi}
\newcommand{\De}{\delta}
\newcommand{\de}{\Delta}
\newcommand{\g}{\nabla}
\newcommand{\dt}{\dot}
\newcommand{\supp}{\mathop{\rm supp}\nolimits}
\newcommand{\es}{\varepsilon}
\newcommand{\al}{\alpha}
\newcommand{\nt}{\noindent}
\newcommand{\iin} {\infty}
\newcommand{\mpp}{\emph}
\newcommand{\ef}{\eqref}
\newcommand{\sS}{\mathfrak{s}}
\newcommand{\we}{\mathfrak{w}}
\newcommand{\lan}{\langle}
\newcommand{\ran}{\rangle}
\theoremstyle{plain}
\newtheorem{theorem}{Theorem}[section]
\newtheorem*{mt}{Main Theorem}
\newtheorem{lemma}[theorem]{Lemma}
\newtheorem{proposition}[theorem]{Proposition}
\theoremstyle{definition}
\newtheorem{definition}[theorem]{Definition}
\theoremstyle{remark}
\numberwithin{equation}{section}
\newtheorem*{definition*}{Definition}
\newtheorem*{problem*}{Problem}
\newtheorem*{remark*}{Remark}
\newtheorem*{note*}{Note}
\numberwithin{equation}{section}
\begin{document}
 
\author{D.~Martirosyan\footnote{Department of Mathematics, University of Cergy-Pontoise, CNRS UMR 8088, 2 avenue
Adolphe Chauvin, 95300 Cergy-Pontoise, France; e-mail: \href{mailto:Davit.Martirosyan@u-cergy.fr}{Davit.Martirosyan@u-cergy.fr}} \and V.~Nersesyan\footnote{Laboratoire de Math\'ematiques, UMR CNRS 8100, Universit\'e de Versailles-Saint-Quentin-en-Yvelines, F-78035 Versailles, France;  e-mail: \href{mailto:Vahagn.Nersesyan@math.uvsq.fr}{Vahagn.Nersesyan@math.uvsq.fr}} 
}

\title{Local large deviations  principle for occupation measures  of the   damped
nonlinear wave equation perturbed by a   white~noise}
\date{\today}
 \maketitle

\begin{abstract}
   
   We consider the damped nonlinear wave (NLW) equation driven by a spatially regular white noise. Assuming that the     noise is non-degenerate in all Fourier modes, we establish a large deviations  principle (LDP) for the occupation measures of the trajectories. The lower bound in the LDP is of a local type, which is related to the weakly dissipative nature of the equation  and seems to be new  in the context of randomly forced PDE's. 
   The proof is based  on an extension   of methods developed in~\cite{JNPS-2012} and~\cite{JNPS-2014}  in the case of     kick forced dissipative PDE's with  parabolic  regularisation property such as, for example,  the Navier--Stokes system and the complex
Ginzburg--Landau equations.   We also show that a high concentration towards the stationary measure is impossible, by proving that the rate function that governs the LDP cannot have the trivial form (i.e., vanish on the stationary measure and be infinite elsewhere).
 
 \smallskip
\noindent
{\bf AMS subject classifications:}  35L70, 35R60, 60B12, 60F10

\smallskip
\noindent
{\bf Keywords:}   nonlinear wave equation, white noise,  large deviations principle, coupling method
\end{abstract}

\tableofcontents

\setcounter{section}{-1}

\section{Introduction}
\label{S:0}

 This paper is devoted to the   study of the large deviations principle (LDP) for the occupation measures of the stochastic nonlinear wave (NLW) equation in a bounded domain $D\subset\R^3$ with a smooth boundary $\partial D$:
 \begin{align}
 \p_t^2u+\gamma \p_tu-\de u+f (u)&=h(x)+\vartheta(t,x), \q u|_{\partial D}=0,\label{0.1}\\ 
  \,\,\, [u(0),\dt u(0)]&=[u_0, u_1].\label{0.2}\
\end{align} 
   Here $\gamma>0$ is a damping parameter, $h$ is a   function in $  H^1_0(D)$, and 
 $f$ is a nonlinear term satisfying some standard dissipativity and growth conditions   (see~\eqref{1.8}-\eqref{1.6}).   These conditions are satisfied  
  for the classical examples~$f(u)=\sin u$ and $f(u)=|u|^{\rho}u-\lm u$, where~$\lm\in\R$ and~$\rho\in(0,2)$, coming from the damped sine--Gordon and
Klein--Gordon equations.
We assume that~$\vartheta(t,x)$ is a white noise of the form
\be\label{0.3}
\vartheta(t,x)=\p_t \xi(t,x), \q  \xi(t,x)=\sum_{j=1}^\infty b_j  \beta_j(t)e_j(x),
\ee where
$\{\beta_j\}$ is a sequence of independent standard Brownian motions, the set of   functions $\{e_j\}$ is
an   orthonormal basis in  $L^2(D)$ formed by    eigenfunctions of the Dirichlet
Laplacian with eigenvalues $\{\lm_j\}$, and
 $\{b_j\}$ is a sequence of real numbers  
  satisfying  
\be \label{a0.4}
\BBB_1:=\sum_{j=1}^\iin\lm_j b_j^2<\infty.
\ee

 We   denote by~$(\uu_t,\pp_\uu), \uu_t= [u_t,\dt u_t]$  the Markov family  associated with this stochastic NLW equation  and parametrised by the   initial condition  
 $ \uu=[u_0, u_1]$. 
 The exponential ergodicity for  this family     is established in \cite{DM2014}, this   result       is recalled  below in Theorem~\ref{T-Davit}.

     \smallskip 
    The LDP for   the occupation measures of randomly forced PDE's  has been previously established in   \cite{gourcy-2007a,gourcy-2007b} in the case of the Burgers equation and the Navier--Stokes system, based on some abstract results from~\cite{wu-2001}. In these papers,  the force is   assumed to be a rough white noise, i.e., it is  
     of the form~\eqref{0.3}   with the following    condition  on the coefficients:   
           $$
  cj^{-\alpha}\le b_j\le Cj^{-\frac12-\es}, \q  \frac12<\alpha<1 ,\,\, \es\in \left(0, \alpha-\frac12\right].
  $$    In the case of a    perturbation which is a    regular    random 
   kick force, the LDP is proved  in~\cite{JNPS-2012, JNPS-2014} for a family of PDE's with parabolic regularisation (such as the Navier--Stokes system or the complex
Ginzburg--Landau equation).  See also \cite{JNPS-2013} for the proof of the LDP and
     the Gallavotti--Cohen principle in the case of a rough      kick force.

     \smallskip 

   The  aim of the  present 
    paper is to extend the results and the methods of   these works under more 
    general   assumptions  
  on  both  stochastic and deterministic   parts of the equations. The random perturbation in our setting  is a spatially regular white noise,  and the NLW equation is only weakly dissipative and  lacks   a regularising property. In what follows, we shall denote by $\mu$ the stationary measure of the family $(\uu_t,\pp_\uu)$, and for any bounded continuous function  $\psi:H_0^1(D)\times L^2(D) \to \R   $, we shall write $\langle\psi,\mu\rangle$ for the integral of~$\psi$ with respect to~$\mu$. We   prove  the following level-1 LDP for the solutions of problem~\eqref{0.1},~\eqref{0.3}. 
      \begin{mt} Assume that conditions \eqref{a0.4} and \eqref{1.8}-\eqref{1.6} are verified and~$b_j>0$ for all~$j\ge1$. Then   for any non-constant bounded  H\"older-continuous function $\psi:H_0^1(D)\times L^2(D) \to \R   $,  there is $\es=\es(\psi)>0$ 
 and a convex function~$I^\psi:\R\to \R_+$
such that,  
  for any    $\uu\in H^{\sS+1}(D)\times H^\sS(D)$ and any open subset~$O$ of the interval $(\langle\psi, \mu\rangle-\es, \langle \psi, \mu\rangle+\es)$, we have
\be\label{Llimit}
\lim_{t\to\infty} \frac1t\log   \pP_\uu\left\{\f{1}{t}\int_0^t\psi(\uu(\tau))\dd\tau\in O\right\}= -\inf_{\alpha\in O} I^\psi(\alpha),
\ee where $ \sS>0$ is a    small number. Moreover,    limit \eqref{Llimit} is uniform with respect to~$\uu$ in a bounded set of  $H^{\sS+1}(D)\times H^\sS(D)$.
   \end{mt} 
 We also establish  a more general result of   level-2 type in Theorem~\ref{T:MT}. 
 These two theorems are slightly different
   from the standard  
  Donsker--Varadhan form    (e.g., see Theorem~3 in~\cite{DV-1975}), since here the LDP is proved to hold   locally on some part of the phase space.

      \smallskip

      The proof of the Main Theorem is obtained by  extending the techniques   and results introduced in~\cite{JNPS-2012, JNPS-2014}. According to a local version of the G\"artner--Ellis theorem, relation \eqref{Llimit} will be established if we show that,  for some $  \beta_0>0 $, the following limit exists
      $$
Q(\beta  )=\lim_{t\to+\ty} \frac{1}{t}
\log\E_\uu\exp \biggl(\,\int_0^t\beta \psi(\uu_\tau)\dd \tau\biggr), \q |\beta|< \beta_0
$$ and it is differentiable in $\beta$ on $(-\beta_0,\beta_0)$.
 We show that both properties   can be derived from a multiplicative ergodic theorem, which is a
 convergence result for
  the  Feynman--Kac semigroup of  the stochastic NLW equation. A continuous-time version of a criterion established in \cite{JNPS-2014} shows that a multiplicative ergodic theorem   holds provided that   the following four  conditions   are satisfied: \mpp{uniform irreducibility, exponential tightness, growth condition,} and \mpp{uniform Feller property}.  
  The smoothness of the noise and
the lack of a strong dissipation and of a regularising property in the equation   result in substantial differences in the  techniques used to verify these     conditions. While in the case of   kick-forced models the  first two of them are checked directly, 
  they have a rather non-trivial proof in our case, relying  on a feedback stabilisation result  and some subtle estimates for the Sobolev norms of the solutions. Nonetheless, the most involved and highly technical part of the paper remains the verification of the uniform Feller property. Based on the coupling method, its proof is more intricate here mainly  due to a more complicated  Foia\c{s}--Prodi   type estimate for the stochastic  NLW equation. We get a uniform Feller property only for potentials that have a sufficiently small oscillation, and  this is the main reason why the LDP   established in this paper is of a local type.

 \smallskip
 
 The paper is organised as follows. We formulate in Section~1 the second main result of this paper on  the level-2 LDP for the NLW equation and, by using a  local version of Kifer's criterion, we reduce its proof to a  multiplicative ergodic theorem.  Section~2 is devoted to the derivation of the Main Theorem. In Sections~3 and~4, we are checking the conditions of  an abstract result about the convergence of generalised Markov semigroups. In Section~5, we prove the exponential tightness property and provide some estimates for the growth of   Sobolev norms of the solutions.   The multiplicative ergodic theorem is established in Section~6. In the Appendix, we prove the local   version of Kifer's criterion, the abstract convergence result for the semigroups, and   some other technical results which are used throughout the paper.

 \subsection*{Acknowledgments} 
The authors   thank Armen Shirikyan for  helpful discussions.
The research of DM  was carried out within the MME-DII Center of Excellence (ANR 11 LABX 0023 01) and partially supported by the ANR grant STOSYMAP (ANR 2011 BS01 015 01). The research of VN was supported by the ANR grants EMAQS (No. ANR 2011 BS01 017 01) and STOSYMAP.

\subsection*{Notation}
For a Banach space $X$, we denote by $B_X(a,R)$ the closed   ball in~$X$ of radius~$R$ centred at~$a$. In the case when~$a=0$, we write $B_X(R)$.   For any   function $V:X\to\R$, we set $\Osc_X(V):=\sup_X V-\inf_X V$.  
We   use the following~spaces:

\smallskip
\noindent
$L^\infty(X)$ is the space of bounded measurable functions $\psi:X\to\R$ endowed with the norm $\|\psi\|_\infty=\sup_{u\in X}|\psi(u)|$. 

\smallskip
\noindent
$C_b(X)$ is the space of continuous functions $\psi\in L^\infty(X)$, and  
$C_+(X)$ is the space of positive continuous functions $\psi:X\to\R$. 

\smallskip
\noindent
$C^q_b(X),$ $ q \in (0,1]$ is the space of functions $f\in C_b(X)$ for which the following
norm is finite 
$$
\|\psi\|_{C_b^ q }=\|\psi\|_\infty+\sup_{u\neq v} \frac{
{|\psi(u)-\psi(v)|}}{ \|u-v\|^ q }.
$$

\smallskip
\noindent 
$\MM(X)$ is the vector space of signed Borel
measures on $X$ with finite total mass   endowed with the topology of  the  weak convergence. $\MM_+(X)\subset \MM(X)$ is the cone of non-negative   measures. 

\smallskip
\noindent
$\ppp(X)$ is the set of probability Borel measures on~$X$. For $\mu\in\ppp(X)$ and  $\psi\in C_b(X)$,  we denote $\lag \psi,\mu\rag=\int_X\psi(u)\mu(\ddd u).$
If $\mu_1,\mu_2\in\ppp(X)$, we set
$$
|\mu_1-\mu_2|_{var}=\sup\{|\mu_1(\Gamma)-\mu_2(\Gamma)|:\Gamma\in\BBBBB(X)\},
$$where $\BBBBB(X)$ is the Borel $\sigma$-algebra of $X$.

\smallskip
\noindent
For any  measurable function ${\wwww}:X\to[1,+\infty]$, let   $C_\wwww(X)$ (respectively, $L_\wwww^\infty(X)$) be the space of continuous (measurable) functions $\psi:X\to\R$ such that $|\psi(u)|\le C\wwww(u)$ for all $u\in X$. We endow~$C_\wwww(X)$ and  $L_\wwww^\infty(X)$ with the seminorm
$$
\|\psi\|_{L_\wwww^\infty}=\sup_{u\in X}\frac{|\psi(u)|}{\wwww(u)}.
$$  

\smallskip
\noindent
$\ppp_\wwww(X)$ is  the space of measures $\mu\in\ppp(X)$ such that $\langle \wwww,\mu\rangle<\infty$.

\smallskip
\noindent
 For an open set $D$ of $\R^3$, we introduce the following function spaces:

\smallskip
\noindent
$L^p=L^p(D)$ is the Lebesgue space of measurable functions whose $p^{\text{th}}$ power is integrable. In the case $p=2$ the corresponding norm is denoted by $\|\cdot\|$.

\smallskip
\noindent
$H^\sS=H^\sS(D), \sS\ge0$ is  the domain of  definition of the operator $(-\Delta)^{\sS/2}$ endowed with the norm $\|\cdot\|_\sS$:
$$
H^\sS=\D\left((-\Delta)^{\sS/2}\right)=\left\{ u=\sum_{j=1}^\infty  u_j e_j \in L^2: \|u\|_\sS^2:=  \sum_{j=1}^\infty \lambda_j^s u_j^2<\infty\right\}.
$$  In particular, $H^1$ coincides with $H^1_0(D)$, the space of functions in the    Sobolev space of order $1$ that vanish at the boundary. We denote by $H^{-\sS}$   the dual of~$H^\sS$.

\section{Level-2   LDP for the NLW equation}
\label{S:1}

\subsection{Stochastic NLW equation and   its mixing properties}
\label{S:1.1}

In this subsection we give the precise hypotheses on the nonlinearity   and recall a result on the property of exponential mixing     for the Markov family associated with  the flow of \ef{0.1}.
We shall assume that $f $ belongs to~$C^2(\R)$, vanishes at zero,  satisfies the growth condition
\be\label{1.8}
|f ''(u)|\leq C(|u|^{\rho-1}+1),\q u\in\rr,
\ee  
for some  positive constants $C$ and $\rho<2$, and the dissipativity conditions 
\begin{align}
F(u)&\geq C^{-1} |f '(u)|^{\f{\rho+2}{\rho}}-\nu u^2-C, \label{1.5}\\
f (u)u- F(u)&\geq-\nu u^2-C, \label{1.6}
\end{align}
 where $F$ is a primitive of $f $,  $\nu$ is a positive number less than  $  (\lm_1\wedge\gamma)/8$. Let us note that inequality \ef{1.5} is slightly more restrictive than the one used in~\cite{DM2014};   this hypothesis allows us   to establish the exponential tightness property (see Section~\ref{S:4.1}). 
We   consider the NLW equation in the phase space   $\h=H^1\times L^2$   endowed with   the norm
\be\label{e40}
|\uu|_\h^2=\| u_1\|^2_1+\|u_2+\al u_1\|^2,\q  \uu=[u_1, u_2]\in\h,
\ee
where $\al=\al(\gamma)>0$ is a small parameter. 
Under the above conditions, for any   initial data $\uu_0=[u_0,u_1]\in \h$, there is a unique solution (or \emph{a flow})  $\uu_t= \uu(t; \uu_0)=[u_t,\dt u_t]$ of problem \ef{0.1}-\ef{0.3} in $\h$ (see Section~7.2 in~\cite{DZ1992}). 
For any $\sS\in \R$,  let~$\h^\sS$ denote the space $H^{\sS+1}\times H^\sS$   endowed   with the norm
$$|\uu|_{\h^{\sS}}^2=\|  u_1\|_{\sS+1}^2+\| u_2+\al u_1\|_\sS^2,\q  \uu=[u_1, u_2]\in\h^{\sS}
$$
with the same  $\al$ as in  \ef{e40}. If~$\uu_0\in \h^\sS$ and $0<\sS<1-\rho/2$, the solution $\uu(t; \uu_0)$ belongs\,\footnote{Some estimates for the  $\h^\sS$-norm of the solutions are given in Section~\ref{S:moments}.} to $ \h^\sS$ almost surely. Let us define a function  $\wwww:\h\to [0, \iin]$      by
\be\label{1.1}
  \wwww(\uu)=1+|\uu|_{\h^\sS}^2+ \ees^4(\uu),  
\ee  which will play the role of the  {\it weight function}. Here 
$$
\ees(\uu)=|\uu|_\h^2+2\int_D F(u_1)\,\dd x, \q \uu=[u_1,u_2]\in\h,
$$
 is the    energy functional of the NLW equation.     

\smallskip

We consider the  Markov family $(\uu_t,\pp_\uu)$ associated with   \ef{0.1} and define the corresponding Markov operators  
\begin{align*}
&\PPPP_t:C_b(\h)\to C_b(\h),  \quad\,\,\,\quad \quad \PPPP_t \psi(\uu)=\int_\h \psi(\vv) P_t(\uu,\ddd \vv),\\
&\PPPP_t^*:\ppp(\h)\to \ppp(\h), \quad \quad\quad\quad \PPPP_t^* \sigma(\Gamma)=\int_\h  P_t(\vv,\Gamma) \sigma(\ddd \vv),\quad t\ge0,
\end{align*}  
where $P_t(\uu,\Gamma)=\pp_\uu\{\uu_t\in\Gamma\}$ is the transition function. Recall that a measure~$\mu\in\ppp(\h)$ is said to be stationary   if $\PPPP_t^*\mu=\mu$ for any $t\ge0$. The following result is   Theorem~2.3 in~\cite{DM2014}.
\begin{theorem} \label{T-Davit} Let us assume that    conditions \eqref{a0.4} and \eqref{1.8}-\eqref{1.6} are verified and~$b_j>0$ for all~$j\ge1$.
Then   the   family~$(\uu_t,\pp_\uu)$  has a unique stationary measure $\mu\in\ppp(\h)$. Moreover, there are positive constants $C$ and $\kp$ such that, for any $\sigma\in\ppp(\h)$, we have
$$
|\PPPP^*_t\sigma-\mu|_L^*\leq C e^{-\kp t}\int_\h \exp\left(\kp|\uu|_\h^4\right)\,\sigma(\ddd \uu),
$$where we set 
$$
|\mu_1-\mu_2|_L^*=\sup_{\|\psi\|_{C_b^1}\leq 1}|\lag \psi,\mu_1\rag-\lag \psi,\mu_2\rag| 
$$ for any $ \mu_1,\mu_2\in\ppp(\h)$.
\end{theorem}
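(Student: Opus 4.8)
\smallskip
\noindent\emph{Proof sketch.}\quad
The plan is to prove Theorem~\ref{T-Davit} by the coupling method for randomly forced dissipative PDE's, adapted to the weakly dissipative hyperbolic equation~\eqref{0.1}. The argument splits into two essentially independent parts: a global \emph{Lyapunov--recurrence} estimate, coming from the energy balance of the equation, which forces the trajectories to enter large balls of $\h$ with good exponential moments; and a \emph{local mixing} estimate on those balls, obtained by coupling the finitely many low Fourier modes of two solutions while controlling the remaining high modes through a Foia\c{s}--Prodi type inequality. Granting these two ingredients, uniqueness of the stationary measure $\mu$ is immediate (two stationary measures would be at zero $|\cdot|_L^*$-distance), and the quantitative bound follows by iterating the local estimate between successive returns to a fixed ball.

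For the recurrence part I would apply Itô's formula to the energy functional $\ees(\uu_t)$ and then to $\exp(\kp\,\ees(\uu_t))$ for $\kp>0$ small. Using the damping term $\gamma\p_t u$, the dissipativity conditions \eqref{1.5}--\eqref{1.6}, and the summability \eqref{a0.4} of the noise coefficients, one obtains a drift estimate of the form $\E_\uu\exp(\kp\,\ees(\uu_t))\le e^{-ct}\exp(\kp\,\ees(\uu))+C$ with $c,C>0$ independent of $\uu$ and of $t\ge0$. Since \eqref{1.5} gives $\ees(\uu)\ge c_0|\uu|_\h^2-C$, this controls, uniformly in $t$, the exponential moments of $|\uu_t|_\h^2$, and, via a stopping-time argument, the tails of the times a trajectory started anywhere needs to enter a fixed ball $B_\h(R)$.

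The core of the proof is the local mixing on $B_\h(R)$. Fix $R$, take $\uu,\vv\in B_\h(R)$, and let $P_N$ denote the orthogonal projection in $L^2$ onto $\mathrm{span}\{e_1,\dots,e_N\}$. Two facts are needed. (i) \emph{Foia\c{s}--Prodi estimate}: if two solutions are driven by the same noise and their $P_N$-components coincide on $[0,T]$, then for $N$ large their difference decays in the $\h$-norm. Since \eqref{0.1} has no parabolic smoothing, this is proved through an energy estimate for the difference $w=u-v$ in the $\al$-modified norm~\eqref{e40}, where the dissipation $-\gamma\|\dt w\|^2$ together with the extra coercivity built into~\eqref{e40} must absorb the high-mode contribution of $f(u)-f(v)$, estimated via the growth bound~\eqref{1.8} and interpolation using the large eigenvalues $\lm_j$, $j>N$. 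It is convenient to implement this through a feedback stabilisation: rather than merely killing the low-mode difference, one adds to the $\vv$-equation a control supported on $\mathrm{span}\{e_1,\dots,e_N\}$ that simultaneously synchronises the low modes and renders the difference equation dissipative. (ii) \emph{Coupling of the low modes}: because $b_j>0$ for all $j$, the projection of the noise onto the first $N$ modes has a nondegenerate Gaussian law, so Girsanov's theorem produces a coupling $(\uu(t),\vv(t))$ of the two trajectories under which, with probability bounded below uniformly on $B_\h(R)$, the low modes follow the stabilising control on $[0,T]$, the Radon--Nikodym cost of the change of drift being integrable precisely because $\uu,\vv$ lie in a bounded set. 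Combining (i) and (ii) yields $T>0$, $\delta\in(0,1)$ and $p>0$ such that, for all $\uu,\vv\in B_\h(R)$, with probability at least $p$ one has $|\uu(T)-\vv(T)|_\h\le\delta|\uu-\vv|_\h$, while on the complementary event the difference stays controlled by the Lyapunov bound.

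Finally one iterates: alternating the one-step squeezing on $B_\h(R)$ with the recurrence estimate that returns the coupled pair to $B_\h(R)$, one gets geometric decay of $\E\bigl(|\uu(t)-\vv(t)|_\h\wedge1\bigr)$, hence of $|\PPPP_t^*\delta_\uu-\PPPP_t^*\delta_\vv|_L^*$; integrating in $\uu$ against an arbitrary $\sigma\in\ppp(\h)$ and in $\vv$ against $\mu$, and bookkeeping the exponential costs accumulated along the random sequence of returns to $B_\h(R)$ (which is what produces the fourth power in the weight), one arrives at $|\PPPP_t^*\sigma-\mu|_L^*\le Ce^{-\kp t}\int_\h\exp(\kp|\uu|_\h^4)\,\sigma(\ddd\uu)$. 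The main obstacle is step (i): for the hyperbolic equation there is no regularising effect to absorb the nonlinearity on the high modes, so the only available dissipation --- the damping, with $\gamma$ not assumed large --- must be exploited through delicate energy and Sobolev-norm estimates and, as indicated, a stabilisation device; this is the technically heaviest part, and the reason the resulting Foia\c{s}--Prodi bound is more involved than in the parabolic setting.
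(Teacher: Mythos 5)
This theorem is not proved in the paper at all --- it is recalled verbatim as Theorem~2.3 of~\cite{DM2014} --- and your sketch follows essentially the same route as that reference and as the machinery recalled in Section~\ref{S:3} here: the Lyapunov function $\exp(\kp\ees(\uu))$ with the energy recurrence estimates, the Foia\c{s}--Prodi estimate with ${\mathsf P}_N$-feedback stabilisation (cf.\ Proposition~\ref{4.13} and equation~\eqref{interm}), a Girsanov/maximal-coupling step for the low modes, and iteration of the squeezing between successive returns to a ball. The only small inaccuracy is cosmetic: the quartic weight $\exp(\kp|\uu|_\h^4)$ comes from the quartic growth of the energy functional, $|\ees(\uu)|\le C(1+|\uu|_\h^4)$ as in~\eqref{aa01}, so that $\exp(\kp\ees(\uu))$ is dominated by this weight, rather than from the bookkeeping of the return times.
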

   
\subsection{The statement of the   result}
\label{S:1.2}

Before giving the formulation of  the main result of this section, let us introduce some notation and
   recall some basic definitions from the theory of   LDP (see~\cite{ADOZ00}). 
For any $\uu \in \h$,  
we define the   following family of {\it occupation measures\/} 
 \begin{equation} \label{a0.5}
\zeta_t=\frac1t\int_{0}^{t}\delta_{\uu_\tau} \dd \tau,\quad t>0,
\end{equation}
   where~$\uu_\tau:=\uu(\tau;\uu)$ and~$\delta_\vv$ is the Dirac measure concentrated at $\vv\in \h$.
         For any~$V\in C_b(\h)$ and~$R>0$, we set
$$
Q_R(V)=\limsup_{t\to+\ty} \frac{1}{t}\log\sup_{\uu \in X_R}\E_\uu 
\exp\bigl(t\lag V, \zeta_t\rag\bigr),
$$
where $X_R:=B_{\h^\sS}(R)$,  $ \sS\in(0,1-\rho/2)$.
Then~$Q_R:C_b(\h)\to \R$ is  a convex    1-Lipschitz    function, and its    {\it Legendre transform\/} is given by    
\begin{equation}\label{I_R}  
I_R(\sigma):=\begin{cases} \sup_{V\in C_b(\h)}\bigl(\lag V, \sigma\rag-Q_R(V)\bigr) & \text{for $\sigma \in {\cal P}(\h)$},  \\ +\infty & \text{for   $\sigma \in \MM(\h)\setminus {\cal P}(\h).$}  \end{cases}
\end{equation}The function~$I_R: \MM(\h)\to [0,+\ty]$ is   convex   lower semicontinuous in the weak topology, and~$Q_R$ can be reconstructed from $I_R$  by the formula 
\be\label{Legtr}
Q_R(V)= \sup_{\sigma\in\ppp(\h)} \bigl(\lag V, \sigma\rag-I_R(\sigma)\bigr) \q \text{for any $V\in C_b(\h)$}.
\ee
We denote by~$\VV$   the set  of functions~$V\in C_b(\h)$ satisfying the following two properties. 
 \begin{description}
\item[\bf  Property~1.]   For any    $R>0$
  and~$\uu\in X_R$, the following limit exists   (called pressure function)
$$
Q(V)=\lim_{t\to+\ty} \frac{1}{t}
\log\E_\uu\exp \biggl(\,\int_0^tV(\uu_\tau)\dd \tau\biggr)
$$
 and does not depend on the initial condition $\uu$. Moreover, this limit is uniform with respect to $\uu\in X_R$.
 \end{description}
  \begin{description}
\item[\bf   Property~2.]
There is   a unique measure~$\sigma_V\in \ppp(\h)$ (called equilibrium state) satisfying the equality
$$
Q_R(V)=  \lag V, \sigma_V\rag-I_R(\sigma_V).
$$
\end{description}
  A mapping $I : \ppp(\h) \to [0, +\ty]$ is  a {\it good rate function\/} if  for any~$a \ge0$ the level set~$\{\sigma\in\ppp(\h) : I(\sigma) \le a \}$ is compact.  A good rate function~$I$ is {\it non-trivial} if  the  effective domain $D_I:=\{\sigma\in\ppp(\h) : I(\sigma) < \infty \}$ is    not a singleton.
 Finally,  we shall denote by $\UU$ the set of functions~$V\in C_b(\h)$   for which there is a number~$q\in (0,1]$, an  integer~$N \ge 1$,    and a function~$F \in C_b^q(\h_N)$
such that 
\be\label{repres1}
V(\uu)=F(P_N\uu),\q\uu\in \h,
\ee where $\h_N:= H_N\times H_N$, $H_N:=\text{span}\{e_1,\ldots, e_N\}$, and~$P_N$ is the orthogonal projection in~$\h$ onto $\h_N$.  Given a number   $\delta>0$,    $\UU_\delta$ is the subset of functions~$V\in \UU$
satisfying $\Osc(V)< \delta$.  
\begin{theorem}  \label{T:MT}
Under the conditions of the Main Theorem,
 for any  $R>0$, the function~$I_R: \MM(\h)\to [0,+\ty]$ defined by \eqref{I_R} is a non-trivial good rate function, and   the family~$\{\zeta_t,t>0\}$         satisfies the following local LDP. 
\begin{description}
\item[Upper bound.] 
For any closed set~$F\subset\ppp(\h)$, we have
\be\label{UB}
\limsup_{t\to\infty} \frac1t\log \sup_{\uu\in X_R} \pP_\uu\{\zeta_t\in F\}\le -I_R(F).
\ee
\item[Lower bound.]  For any open set~$G\subset\ppp(\h)$, we have
\be\label{LB}
\liminf_{t\to\infty} \frac1t\log \inf_{\uu\in X_R}\pP_\uu\{\zeta_t\in G\}\ge -I_R( \W \cap G).
\ee Here\,\footnote{The infimum over an empty set is equal to $+\ty$.}   $I_R(\Gamma):=\inf_{\sigma\in \Gamma} I (\sigma)$ for $\Gamma \subset \ppp(\h)$ and  
 $\W:=\{\sigma_V: V\in \VV\}$,  where~$\sigma_V$ is the equilibrium state\,\footnote{By the fact that $I_R$ is a good rate function, the set of equilibrium states  is non-empty for any  $V\in C_b(\h)$. In Property 2,  the important assumption is the uniqueness.} corresponding to $V$. 
 \end{description}
  Furthermore, there is a number $\delta>0$ such that $\UU_\delta \subset \VV$  and for any $V\in \UU_\delta$, the pressure function $Q_R(V)$ does not depend on $R$.
\end{theorem}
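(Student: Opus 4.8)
\smallskip

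The plan is to derive Theorem~\ref{T:MT} from a multiplicative ergodic theorem for the Feynman--Kac semigroup
$$
\PPPP_t^V\psi(\uu)=\E_\uu\Bigl[\psi(\uu_t)\exp\Bigl(\int_0^t V(\uu_\tau)\dd\tau\Bigr)\Bigr],\qquad V\in C_b(\h),\ t\ge0,
$$
by means of a local version of Kifer's criterion, proved in the Appendix. That criterion shows that $\{\zeta_t,t>0\}$ obeys the bounds \eqref{UB} and \eqref{LB} with $I_R$ a good rate function once the occupation measures are exponentially tight and $\VV$ contains a sufficiently rich subfamily of $C_b(\h)$; for the latter it suffices that $\UU_\delta\subset\VV$ for some $\delta>0$, because a cylinder function $F(P_N\uu)$ with $F\in C_b^q(\h_N)$ belongs to $\UU_\delta$ after multiplication by a small constant, and functions of this form determine measures on $\h$. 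The presence of $\W\cap G$ in place of $G$ in the lower bound is intrinsic to Kifer's method, which produces the matching lower bound only at measures that arise as (unique) equilibrium states; as soon as $\VV$ is rich, this set is large enough to make \eqref{LB} meaningful. Once $\UU_\delta\subset\VV$ is known, Property~1 and the definition of $Q_R$ give $Q_R(V)=Q(V)$ for every $V\in\UU_\delta$, and since $Q(V)$ carries no reference to $R$, this yields the last assertion of the theorem.

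It therefore remains to establish $\UU_\delta\subset\VV$, i.e.\ Properties~1 and~2 for $V\in\UU_\delta$. I would derive both from a multiplicative ergodic theorem asserting that, for such $V$, there exist a number $\lambda_V>0$, a function $h_V\in C_\wwww(\h)$ with $h_V>0$, and a measure $\mu_V\in\ppp_\wwww(\h)$ satisfying $\PPPP_t^Vh_V=\lambda_V^t h_V$, $(\PPPP_t^V)^*\mu_V=\lambda_V^t\mu_V$, $\langle h_V,\mu_V\rangle=1$, and
$$
\lambda_V^{-t}\,\PPPP_t^V\psi\longrightarrow\langle\psi,\mu_V\rangle\,h_V\quad\text{as }t\to\infty
$$
for every $\psi\in C_\wwww(\h)$, uniformly on each ball $X_R$ and with an exponential rate. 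Taking $\psi\equiv1$ gives Property~1 with $Q(V)=\log\lambda_V$, and the equilibrium state in Property~2 is the normalised measure $\sigma_V=h_V\mu_V$, its uniqueness being part of the conclusion. To prove this multiplicative ergodic theorem I would apply a continuous-time version of the abstract convergence theorem for generalised Markov semigroups from~\cite{JNPS-2014}, which reduces it to four properties of $\PPPP_t^V$ acting on the weighted space: \emph{uniform irreducibility}, \emph{exponential tightness}, a \emph{growth condition} bounding $\PPPP_t^V\wwww$ by a multiple of $\wwww$, and the \emph{uniform Feller property} of the normalised semigroup $\lambda_V^{-t}\PPPP_t^V$ on $C_\wwww(\h)$.

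The bulk of the work is the verification of these four properties (Sections~3--5). In contrast with the kick-forced parabolic setting, uniform irreducibility and exponential tightness are not automatic here: they rest on a feedback stabilisation result for the controlled NLW equation together with moment and regularisation-type estimates for the norms $|\uu_t|_{\h^\sS}$ and for the energy $\ees(\uu_t)$, which is exactly where the strengthened dissipativity hypothesis \eqref{1.5} and the choice of the weight function \eqref{1.1} are used. The growth condition follows from exponential bounds on $\ees(\uu_t)$ and $|\uu_t|_{\h^\sS}$. The principal difficulty---and the reason the LDP is only local---is the uniform Feller property: its proof is based on the coupling method applied to a Foia\c{s}--Prodi type decomposition of the NLW flow, and because the equation is only weakly dissipative and lacks a smoothing property, the relevant Foia\c{s}--Prodi estimate is weaker than in the parabolic case, so the argument closes only when $\Osc(V)$ is sufficiently small, which is precisely what forces the restriction to $\UU_\delta$. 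I expect this to be the hardest step.

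Finally, the non-triviality of $I_R$ follows by combining the upper bound \eqref{UB} with a control argument. Take $\vv^*=[v_1^*,0]$ with $v_1^*\in H_N$ for some $N\ge1$, and a bounded continuous potential $V_0$ which is large in a neighbourhood of $\vv^*$ and satisfies $V_0(\vv^*)>\langle V_0,\mu\rangle$; set $F=\{\sigma\in\ppp(\h):\langle V_0,\sigma\rangle\ge a\}$ with $\langle V_0,\mu\rangle<a<V_0(\vv^*)$, so that $F$ is weakly closed and $\mu\notin F$. Using the feedback stabilisation result already invoked for uniform irreducibility, one can drive the solution from any point of $X_R$ into a small neighbourhood of $\vv^*$ in unit time and keep it there on the interval $[1,t]$ with a control whose Girsanov cost grows at most linearly in $t$, so that $\pP_\uu\{\zeta_t\in F\}\ge e^{-Ct}$ for some finite $C$ and all large $t$. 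Comparing this with \eqref{UB} gives $I_R(F)\le C<\infty$, hence $I_R$ is finite at some measure of $F$, which is necessarily distinct from $\mu$; therefore $D_{I_R}$ is not a singleton and $I_R$ is non-trivial.
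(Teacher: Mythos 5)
You follow the paper's route for the bulk of the statement: the bounds \eqref{UB} and \eqref{LB} with a good rate function come from the local Kifer criterion (Theorem~\ref{T:2.3}) once exponential tightness is verified, and the inclusion $\UU_\delta\subset\VV$ together with the $R$-independence of $Q_R(V)=\log\la_V$ comes from the multiplicative ergodic theorem (Theorem~\ref{T:1.1}), exactly as in Steps~1--2 of the paper's proof. One point you gloss over: the uniqueness of the equilibrium state (Property~2) is \emph{not} "part of the conclusion" of the multiplicative ergodic theorem. In the paper it requires a separate argument (Proposition~\ref{P:JNPS}), which introduces the twisted semigroup $\SSSS_t^{V,F}$, identifies the equilibrium states of $V$ with the zeros of the auxiliary rate function $I_R^V$, and then shows that $h_V\mu_V$ is the unique such zero; some argument of this kind is needed and should not be omitted.

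The genuine gap is in your proof of non-triviality of $I_R$. The step "keep it there on the interval $[1,t]$ with a control whose Girsanov cost grows at most linearly in $t$" is unjustified. Holding the trajectory near a point $\vv^*=[v_1^*,0]$ with $v_1^*\in H_N$ requires a persistent drift close to $-\de v_1^*+f(v_1^*)-h$, and a Girsanov shift is admissible only for drifts in the Cameron--Martin space of $\xi$; under the standing assumptions ($b_j>0$ and $\sum\lm_jb_j^2<\infty$, with no lower bound on the decay of $b_j$) such a drift is in general not admissible, so in effect only stationary solutions $\hat\vv$ of the unperturbed equation can be reached at finite exponential cost. Even if you take $\vv^*=\hat\vv$, the Girsanov change of measure can only remove the finite-dimensional feedback; the full noise still acts on the shifted process, so the trajectory does not remain in a small neighbourhood of $\hat\vv$ on $[1,t]$. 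Forcing it to stay there amounts to a confinement (small-ball) event for the uncontrolled infinite-dimensional part of the noise over a time interval of length $t$, and there is no bound of the form $e^{-Ct}$ for this event under the stated hypotheses (it cannot be produced by Girsanov for the same Cameron--Martin reason, and a mode-by-mode allocation requires extra conditions such as $\sum\lm_j^{1/2}b_j<\infty$). The paper avoids all of this with a soft contradiction argument: if $D_{I_R}=\{\mu\}$, then $Q(V)=\lag V,\mu\rag$ for all $V\in C_b(\h)$, so taking a non-constant $V\in\UU_\delta$ with $\lag V,\mu\rag=0$ gives $\la_V=1$ and, by \eqref{a1.5} with $\psi={\mathbf1}$, $\sup_{t\ge0}\E_0\exp\bigl(\int_0^tV(\uu_\tau)\dd\tau\bigr)<\infty$, which contradicts the central limit theorem for the stochastic NLW equation (\cite{DM2014}, \cite{KS-book}) unless $V\equiv{\mathbf0}$. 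You should either adopt that argument or supply the missing exponential lower bound, which does not seem available under the paper's assumptions on $\{b_j\}$.
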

This theorem is proved in the next subsection,  using a multiplicative  ergodic theorem and a local version of Kifer's criterion for LDP. Then in Section~\ref{S:MTP}, we combine it with a local  version of the  G\"artner--Ellis theorem  to establish the Main Theorem. 
   
\subsection{Reduction to a  multiplicative  ergodic theorem}
\label{S:1.3}

 In this subsection    we reduce the proof of  Theorem~\ref{T:MT}   to some properties related to the large-time behavior of      the   {\it Feynman--Kac semigroup}     defined by
$$
\PPPP_t^V \psi(\uu)=\E_\uu \left\{ \psi(\uu_t)\exp \biggl(\,\int_0^tV(\uu_\tau)\dd \tau\biggr)\right\}.
$$ For any $V\in C_b(\h)$ and $t\ge0$, the application~$\PPPP_t^V$ maps $C_{b}(\h)$ into itself.  Let us denote by $\PPPP_t^{V*}:\MM_+(\h)\to \MM_+(\h)$ its dual semigroup, and recall that a measure~$\mu\in\ppp(\h)$ is   an {\it eigenvector\/}     if there is $\la\in\R$ such that $\PPPP^{V*}_t\mu=\la^t \mu$  for any~$t>0$.   Let $\wwww$ be the function defined by \eqref{1.1}.
  From~\eqref{e30} with $m=1$  it follows that~$\PPPP_t^V$ maps\,\footnote{When we write $C_\wwww(\h^\sS)$ or  $C(X_R)$, the sets $\h^\sS$ and $X_R$ are assumed to be endowed with the topology induced by $\h$.}
      $C_{\wwww}(\h^\sS)$ into itself  (note that $\wwww_1=\wwww$ in~\eqref{e30}).
We shall say that a function $h\in C_\wwww(\h^\sS)$ is an eigenvector for the semigroup $\PPPP_t^V$ if~$\PPPP^V_t h(\uu)=\la^t h(\uu)$ for    any~$\uu\in \h^\sS$ and~$t>0$.
Then  we have the following theorem.  \begin{theorem} \label{T:1.1}
Under the conditions  of  the Main Theorem,    there is    $\delta>0$    such that     the following assertions hold  for any $V\in \UU_\delta$.
\begin{description}
\item[Existence and uniqueness.] The semigroup  $\PPPP_t^{V*}$ admits a unique   eigenvector~$\mu_V \in\ppp_\wwww(\h)$ corresponding to an  eigenvalue $\la_V>0$.   Moreover,  for any $m\ge1$, we have 
\be
\int_\h \left[|\uu|^m_{\h^\sS}+ \exp(\kp\ees(\uu))\right] \mu_{ V}(\ddd\uu)<\iin,  \label{momentestimate}
\ee
 where $\kp:=(2\al)^{-1}\BBB$ and $\BBB:=\sum b_j^2$.
The semigroup $\PPPP_t^{V}$ admits a unique eigenvector~$h_V\in C_{\wwww}(\h^\sS)\cap C_+(\h^\sS)$ corresponding to  $\la_V$    normalised by the condition $\langle h_V,\mu_V\rangle=1$.
\item[Convergence.] 
For any $\psi\in C_{\wwww}(\h^\sS)$, $\nu\in \ppp_{\wwww}(\h)$, and $R>0$, we have 
\begin{align}
\lambda_V^{-t}\PPPP_t^V \psi&\to\lag \psi,\mu_V\rag h_V
\quad\mbox{in~$C_b(X_R)\cap L^1(\h,\mu_V)$ as~$t\to\infty$}, \label{a1.5}\\
\lambda^{-t}_V\PPPP_t^{V*}\nu&\to\lag h_V,\nu\rag\mu_V
 \quad\mbox{in~$\MM_+(\h)$ as~$t\to\infty$}. \label{a1.6}
\end{align} 
\end{description}
\end{theorem}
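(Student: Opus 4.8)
The plan is to deduce this multiplicative ergodic theorem from the general convergence result for generalised Markov semigroups established in~\cite{JNPS-2014}, adapted to continuous time. That abstract criterion asserts that a Feynman--Kac type semigroup has a unique eigenvector/eigenmeasure pair with the stated convergence properties provided four hypotheses hold on the phase space (here $X=\h$, with the weight $\wwww$ defined in~\eqref{1.1} and the family of compacts $X_R=B_{\h^\sS}(R)$): \emph{uniform irreducibility}, \emph{exponential tightness}, a \emph{growth condition} relating $\PPPP_t^V\wwww$ to $\wwww$, and the \emph{uniform Feller property} for the normalised semigroup on the sets $X_R$. So the first step is to state precisely the continuous-time version of the criterion from~\cite{JNPS-2014} (this is one of the appendix results I am allowed to invoke), and then verify its four hypotheses one at a time for the stochastic NLW equation.

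For the \textbf{growth condition}, I would use the moment estimates for $|\uu_t|_{\h^\sS}$ and for $\exp(\kp\ees(\uu_t))$ — the energy balance for~\eqref{0.1} together with It\^o's formula and the choice $\kp=(2\al)^{-1}\BBB$ make $\exp(\kp\ees(\uu_t))$ an almost-supermartingale after a suitable correction — to show $\PPPP_t^V\wwww\le C e^{-ct}\wwww+C$ for $V$ of small oscillation; since $V$ is bounded, $\Osc(V)<\delta$ forces $\la_V$ to lie near $1$, and the exponential decay coming from the dissipativity conditions~\eqref{1.5}--\eqref{1.6} dominates. This also yields~\eqref{momentestimate} once $\mu_V$ is produced, by passing the moment bounds through the eigenvector relation. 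For \textbf{exponential tightness} I would invoke the estimates of Section~5 (the exponential tightness property and the Sobolev-norm growth bounds, which the paper flags as relying on the strengthened hypothesis~\eqref{1.5}); concretely, one shows $\E_\uu\exp(t\lag V,\zeta_t\rag)\mathbbm{1}\{\zeta_t\notin K_\varepsilon\}$ is exponentially small uniformly on $X_R$ for a suitable compact $K_\varepsilon\subset\ppp(\h)$. For \textbf{uniform irreducibility} I would use the feedback stabilisation result for the controlled NLW equation (the deterministic control problem: steer any initial datum in $X_R$ into a small ball around a fixed point in finite time), combined with non-degeneracy $b_j>0$ for all $j$ to convert controllability into a uniform lower bound on transition probabilities $P_t(\uu,\cdot)$ of small balls, uniformly over $\uu\in X_R$.

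The \textbf{main obstacle}, as the introduction itself signals, is the \textbf{uniform Feller property}: one must show that for $V\in\UU_\delta$ the family $\{\la_V^{-t}\PPPP_t^V\psi : t\ge 0\}$ is uniformly equicontinuous on each $X_R$ for $\psi$ in a suitable class, or rather establish the slightly weaker uniform-Feller condition of the abstract criterion directly for the un-normalised semigroup. I would attack this with the coupling method: construct a coupling of two trajectories started from nearby points $\uu,\uu'\in X_R$ using a Foia\c{s}--Prodi reduction — force the low (first $N$) Fourier modes to coincide via an extra drift absorbed by Girsanov, and show the high modes contract because the NLW flow is dissipative there. The difficulty relative to the parabolic case is that there is no smoothing, so the Foia\c{s}--Prodi estimate for~\eqref{0.1} is only of an exponential-in-time-integrated type and couples the $\h$-norm difference to an auxiliary quantity; controlling the Radon--Nikodym density of the coupling then requires $\Osc(V)$ small, which is exactly where the restriction to $\UU_\delta$ (and hence the locality of the final LDP) enters. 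Since $V=F(P_N\uu)$ depends only on finitely many modes, once the low modes are synchronised the weight $e^{\int_0^t V}$ is the same along both coupled trajectories up to the coupling time, and the uniform Feller bound follows by splitting at that time and using the growth condition to control the tails. Once all four hypotheses are in place, the abstract theorem delivers $\mu_V$, $\la_V$, $h_V$ with $\langle h_V,\mu_V\rangle=1$, the positivity and $C_\wwww\cap C_+$ regularity of $h_V$, and the convergences~\eqref{a1.5}--\eqref{a1.6}, completing the proof.
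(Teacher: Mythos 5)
Your plan follows the same route as the paper: reduce Theorem~\ref{T:1.1} to the continuous-time abstract criterion for generalised Markov semigroups (Theorem~\ref{T:5.3}), verify the growth condition by a Lyapunov-type argument requiring $\Osc(V)$ small (Proposition~\ref{P:2.4}), uniform irreducibility by combining a feedback stabilisation/controllability argument with the non-degeneracy of the noise (Proposition~\ref{P:2.2}), and the uniform Feller property by the coupling construction with a Foia\c{s}--Prodi reduction and a Girsanov-type shift, the smallness of $\Osc(V)$ being exactly what makes the final series converge (Theorem~\ref{T:3.1}). So the architecture is right.

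There is, however, a concrete gap: the abstract criterion does \emph{not} deliver the existence of the eigenmeasure $\mu_V$. As stated (and as in its discrete-time predecessor), Theorem~\ref{T:5.3} only asserts uniqueness of an eigenvector $\mu_t$ satisfying the tail condition \eqref{5.11}, and all of its conclusions (existence of $h_V$, convergence \eqref{5.12}) are conditional on such a measure existing for every $t>0$. In the paper this existence is a separate step (Proposition~\ref{e23}): one applies the Leray--Schauder theorem to the map $\sigma\mapsto \PPPP_t^{V*}\sigma/\PPPP_t^{V*}\sigma(\h)$ on the convex compact set of measures with a bounded $\tilde\wwww_m$-moment, which simultaneously produces the eigenmeasure, the moment bound \eqref{momentestimate}, and the verification of \eqref{5.11}; your suggestion to obtain \eqref{momentestimate} ``by passing the moment bounds through the eigenvector relation'' presupposes the eigenmeasure you have not yet constructed. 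Three smaller points: exponential tightness is not among the hypotheses of Theorem~\ref{T:5.3} (it enters only in the reduction of the LDP via the local Kifer criterion); what is needed instead, and is missing from your list, is the time-continuity of $t\mapsto\PPPP_t^V g(\uu)$, checked in Step~1 of Section~\ref{S:5} using \eqref{e30}. The abstract theorem yields the convergence only for $\psi$ in the closure of the determining family and in $C(X_R)$, so the full statements \eqref{a1.5}--\eqref{a1.6} for all $\psi\in C_\wwww(\h^\sS)$ and $\nu\in\ppp_\wwww(\h)$ require the approximation arguments of Steps~3--4 of Section~\ref{S:5}, which rely again on the growth bound. Finally, on the coupling event the weights $\exp\bigl(\int_0^t V(\uu_\tau)\dd\tau\bigr)$ and $\exp\bigl(\int_0^t V(\uu'_\tau)\dd\tau\bigr)$ are not equal, since only the auxiliary process, not the low modes of $\uu$, coincides with $\uu'$; the paper instead bounds their ratio using the H\"older continuity of $V$ together with the exponential Foia\c{s}--Prodi decay \eqref{FPE1}, which is where the requirement $V\in C_b^q$ of the class $\UU$ is used.
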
 
This result is proved in Section~\ref{S:5}. Here we apply it to establish  Theorem~\ref{T:MT}. 
\begin{proof}[Proof of     Theorem \ref{T:MT}]
{\it Step~1: Upper and lower bounds}. We apply Theorem~\ref{T:2.3} to prove estimates~\eqref{UB} and \eqref{LB}. 
Let us consider the following totally ordered set $(\Theta, \prec)$, where  $\Theta= \R^*_+\times X_R$    and  $\prec$ is a relation defined by  $(t_1,\uu_1)\prec (t_2,\uu_2)$ if and only if $t_1\le t_2$. 
For any $\theta=(t,\uu)\in \Theta$,  we set $r_\theta:=t$ and~$\zeta_\theta:=\zeta_t$, where~$\zeta_t$  is  the random probability measure given by~\eqref{a0.5} defined on the probability space $(\Omega_\theta,\FF_\theta,\IP_\theta):=(\Omega,\FF,\IP_\uu)$.   The conditions of Theorem~\ref{T:2.3}    are satisfied   for the family~$\{\zeta_\theta\}_{\theta\in \Theta}$. Indeed,
    a family~$\{x_\theta\in \R,\theta\in\Theta\}$ converges  if and only if it converges uniformly with respect to $\uu\in X_R$ as~$t\to+\ty$.  Hence~\eqref{2.1} holds with $Q=Q_R$, and for any $V\in \VV$, Properties~1 and~2 imply limit \eqref{2.1a} and  the     uniqueness of  the   equilibrium state.  It remains to check the following condition, which we postpone to  Section~\ref{S:4}.
    \begin{description}
\item[\bf Exponential tightness.] There is  a function $\varPhi : \h  \to [0,+\ty]$   whose level sets $ \{\uu \in \h : \varPhi(\uu)\le a\}$ are compact for any $a\ge0$  and 
$$
\E_\uu\exp \biggl(\,\int_0^t\varPhi(\uu_\tau)\dd \tau\biggr)\le 
Ce^{c t},\quad      \uu\in X_R, \,t>0
$$for some positive constants $C$ and $c$.
\end{description}
Theorem~\ref{T:2.3}  implies that 
     $I_R$   is a   good rate function and  the following two inequalities hold  for any  closed set~$F\subset \ppp(\h)$ and    open set  $G\subset \ppp(\h)$ 
\begin{align*}
\limsup_{\theta\in\Theta} \frac{1}{r_\theta}\log\pP_\theta\{\zeta_\theta\in F\}&\le -  I_R (F),\\
\liminf_{\theta\in\Theta} \frac1{r_\theta}\log\pP_\theta\{\zeta_\theta\in G\}&\ge -  I_R (\W\cap G).
\end{align*}
These   inequalities imply~\eqref{UB} and \eqref{LB}, since we have the  equalities 
\begin{align*}
\limsup_{\theta\in\Theta} \frac{1}{r_\theta}\log\pP_\theta\{\zeta_\theta\in F\}&=\limsup_{t\to\infty} \frac1t\log \sup_{\uu\in X_R} \pP_\uu\{\zeta_t\in F\},\\
\liminf_{\theta\in\Theta} \frac1{r_\theta}\log\pP_\theta\{\zeta_\theta\in G\}&=\liminf_{t\to\infty} \frac1t\log \inf_{\uu\in X_R}\pP_\uu\{\zeta_t\in G\}.
\end{align*}

\medskip
  {\it Step~2: Proof of  the inclusion $\UU_\delta\subset\VV$}.  Let   $\delta>0$ be  the constant in  Theorem~\ref{T:1.1}.    Taking~$\psi={\mathbf1}$ in~\eqref{a1.5}, we get  Property~1 with~$Q_R(V):=\log \la_V$ for any $V\in \UU_\delta$  (in particular,  $Q(V):=Q_R(V)$  does not depend
on $R$).   

  Property~2 is deduced from limit~\eqref{a1.5}     in the same way as in~\cite{JNPS-2014}. Indeed, for any $V\in \UU_\delta$, we introduce  the semigroup  
\begin{equation} \label{6.67}
\SSSS_t^{V,F}\psi (\uu)=\lambda_V^{-t}h_V^{-1}\PPPP_t^{V+F}(h_V \psi)(\uu), \q \psi,F\in C_b(\h), \, t\ge0,
\end{equation}     the function  
\be\label{Qhav0}
Q_R^V(F):=\limsup_{t\to+\ty} \frac{1}{t}\log\sup_{\uu \in X_R}\log(\SSSS_t^{V,F}{\mathbf1})(\uu),  
\ee and the Legendre transform $I^V_R: \MM(\h)\to [0,+\infty] $ of $Q_R^V(\cdot)$. 
The arguments of  Section~5.7 of~\cite{JNPS-2014} show that $\sigma\in \ppp(\h)$ is an equilibrium state for $V$ if and only if  $I^V_R(\sigma)=0$. So the uniqueness follows from  the following result which   is a continuous-time version of Proposition 7.5 in \cite{JNPS-2014}. Its proof is given in the Appendix.
       \begin{proposition}\label{P:JNPS}For any $V\in \UU_\delta$ and $R>0$, the measure~$\sigma_V=h_V\mu_V$ is the unique 
        zero of $I_R^V$.  
      \end{proposition}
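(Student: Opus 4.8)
The plan is to exploit that, for $V\in\UU_\delta$, the rescaled Feynman--Kac semigroup $\SSSS_t:=\SSSS_t^{V,0}=\lambda_V^{-t}h_V^{-1}\PPPP_t^V(h_V\,\cdot\,)$ is a genuine Markov semigroup on $C_b(\h^\sS)$: it preserves $\mathbf1$ because $\PPPP_t^Vh_V=\lambda_V^th_V$, and the probability measure $\sigma_V=h_V\mu_V$ (well defined since $\lag h_V,\mu_V\rag=1$) is stationary for it because $\PPPP_t^{V*}\mu_V=\lambda_V^t\mu_V$; moreover $\sigma_V$ is carried by $\h^\sS$ and inherits from~\eqref{momentestimate} the finiteness of all moments $\lag|\cdot|_{\h^\sS}^m,\sigma_V\rag$. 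First I would record the consequences of the Convergence part of Theorem~\ref{T:1.1}: applying~\eqref{a1.5}--\eqref{a1.6} with $\psi$ replaced by $h_V\psi\in C_\wwww(\h^\sS)$ and dividing by the pointwise positive function $h_V$ gives, for every $\psi\in C_b(\h)$ and $R>0$,
$$
\SSSS_t\psi\to\lag\psi,\sigma_V\rag\ \text{ in }C_b(X_R)\cap L^1(\h,\mu_V),\qquad \SSSS_t^{*}\nu\to\sigma_V\ \text{ in }\MM_+(\h),\quad\nu\in\ppp_\wwww(\h);
$$
in particular $\SSSS_t\psi\to\lag\psi,\sigma_V\rag$ pointwise on $\h^\sS=\bigcup_{R>0}X_R$, and $\sigma_V$ is the only stationary probability measure of $\{\SSSS_t\}$ carried by $\h^\sS$.

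Next I would show $I_R^V(\sigma_V)=0$. Since $\SSSS_t\mathbf1=\mathbf1$ one has $Q_R^V(0)=0$, hence $I_R^V\ge0$, and it suffices to prove $\lag F,\sigma_V\rag\le Q_R^V(F)$ for all $F\in C_b(\h)$. For this I would use the Doob transform: the random variables $Z_t:=\lambda_V^{-t}h_V^{-1}(\uu)h_V(\uu_t)\exp(\int_0^tV(\uu_\tau)\dd\tau)$ satisfy $\E_\uu Z_t=1$ and the multiplicative cocycle identity, hence define a consistent family of probability measures $\widetilde\pP_\uu$ on path space under which $\uu_\cdot$ is Markov with transition semigroup $\{\SSSS_t\}$, and for which $\SSSS_t^{V,F}\mathbf1(\uu)=\widetilde\E_\uu\exp(\int_0^tF(\uu_\tau)\dd\tau)$. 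Jensen's inequality gives $\SSSS_t^{V,F}\mathbf1(\uu)\ge\exp(\int_0^t\SSSS_\tau F(\uu)\dd\tau)$; fixing some $\uu\in X_R$ and combining $\SSSS_\tau F(\uu)\to\lag F,\sigma_V\rag$ with Ces\`aro convergence of time averages yields $Q_R^V(F)\ge\lag F,\sigma_V\rag$. Hence $I_R^V(\sigma_V)=0$.

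For uniqueness, let $\sigma\in\ppp(\h)$ with $I_R^V(\sigma)=0$, i.e. $\lag F,\sigma\rag\le Q_R^V(F)$ for all $F\in C_b(\h)$. I would first show $\sigma(\h^\sS)=1$. Let $\varPhi$ be the exponential-tightness function; it has compact level sets in $\h$ and may be taken to dominate $c\,|\cdot|^2_{\h^\sS}$, so $\{\varPhi<\infty\}\subseteq\h^\sS$. For $\uu\in X_R$ one has $\SSSS_t^{V,\varPhi\wedge M}\mathbf1(\uu)\le\lambda_V^{-t}h_V^{-1}(\uu)\E_\uu[h_V(\uu_t)\exp(\int_0^t(V+\varPhi)(\uu_\tau)\dd\tau)]$, and combining $h_V\le C\wwww$, the growth estimate~\eqref{e30}, the moment bounds~\eqref{momentestimate}, the exponential-tightness estimate and $\inf_{X_R}h_V>0$ (a property of $h_V$ established in the proof of Theorem~\ref{T:1.1}) one gets $\sup_MQ_R^V(\varPhi\wedge M)<\infty$; then $\lag\varPhi\wedge M,\sigma\rag\le Q_R^V(\varPhi\wedge M)$ and $M\uparrow\infty$ force $\lag\varPhi,\sigma\rag<\infty$, whence $\sigma(\h^\sS)=1$. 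Since $\sigma$ is carried by $\h^\sS$, Dugundji's extension theorem lets me extend the inequality $\lag F,\sigma\rag\le Q_R^V(F)$ to all $F\in C_b(\h^\sS)$.

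Finally I would deduce $\sigma=\sigma_V$ by showing $\sigma$ is $\SSSS_1$-stationary and invoking unique ergodicity. For $g\in C_b(\h)$ I would apply the extended inequality to $F=\pm t(\SSSS_1g-g)\in C_b(\h^\sS)$; the key estimate is $Q_R^V\big(t(\SSSS_1g-g)\big)\le Ct^2$ for small $|t|$. One may evaluate $Q_R^V$ along integer times (since $\SSSS_s^{V,F}\mathbf1\in[e^{-s\|F\|_\infty},e^{s\|F\|_\infty}]$ for $s\in[0,1]$), write $\int_0^n(\SSSS_1g-g)(\uu_\tau)\dd\tau=B_n-A_n$ with $|B_n|\le2\|g\|_\infty$ and $A_n=\int_0^n\big(g(\uu_{\tau+1})-\widetilde\E[g(\uu_{\tau+1})\mid\FF_\tau]\big)\dd\tau$, split $A_n=\sum_k\psi_k$ (with $\psi_k:=\int_k^{k+1}(\cdots)\dd\tau$, $|\psi_k|\le2\|g\|_\infty$) according to the parity of $k$ (each parity class being a martingale difference sequence for the suitable shifted filtration), and apply the Azuma--Hoeffding and Cauchy--Schwarz inequalities to get $\widetilde\E_\uu\exp(\lambda A_n)\le\exp(C\lambda^2\|g\|_\infty^2n)$ uniformly in $\uu$, which gives the claim. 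Dividing $\pm\lag\SSSS_1g-g,\sigma\rag\le Ct$ by $t$ and letting $t\to0^+$ yields $\lag\SSSS_1g,\sigma\rag=\lag g,\sigma\rag$ for all $g\in C_b(\h)$, i.e. $\SSSS_1^{*}\sigma=\sigma$; then $\lag g,\sigma\rag=\lag\SSSS_ng,\sigma\rag$, and since $\SSSS_ng\to\lag g,\sigma_V\rag$ pointwise on $\h^\sS$ with $|\SSSS_ng|\le\|g\|_\infty$, dominated convergence (using $\sigma(\h^\sS)=1$) gives $\lag g,\sigma\rag=\lag g,\sigma_V\rag$ for all $g\in C_b(\h)$, so $\sigma=\sigma_V$. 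I expect the main obstacle to be exactly this quadratic bound: the continuous-time additive functional $\int_0^n(\SSSS_1g-g)(\uu_\tau)\dd\tau$ telescopes only up to unit-interval boundary terms, so the parity splitting is what recasts it as a genuine martingale estimate while keeping all constants uniform over $\uu\in X_R$; a secondary difficulty is the bundle of weighted Feynman--Kac estimates in the preceding paragraph, which rely on the fine properties of the eigenfunction $h_V$ (positivity, domination by $\wwww$, and a positive lower bound on bounded subsets of $\h^\sS$) obtained in the proof of Theorem~\ref{T:1.1}.
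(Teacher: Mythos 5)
Your argument is correct in substance, but it follows a genuinely different route from the paper's. Both proofs share the same endgame: show that any zero of $I_R^V$ is an invariant measure of the twisted semigroup $\SSSS_t^{V}$ carried by $\h^\sS$, and then conclude by its unique ergodicity (the paper isolates this as a lemma identifying $h_V\mu_V$ as the unique stationary measure; you re-derive it from \eqref{a1.5}--\eqref{a1.6}). The difference is in how invariance is extracted from $I_R^V(\sigma)=0$: the paper introduces the generator domain $\D({\mathbb L_V})$, proves $Q_R^V(-{\mathbb L_V}\psi/\psi)=0$ for strictly positive $\psi$ in that domain (the Donsker--Varadhan trick), and linearises via the perturbation $1+\theta\psi$ to get $\lag {\mathbb L_V}\psi,\sigma\rag=0$ on a determining class; you instead prove the quadratic bound $Q_R^V\bigl(s(\SSSS_1g-g)\bigr)\le Cs^2$ by writing the additive functional under the Doob-transformed measure as a bounded telescoping term plus two martingale-difference blocks (the parity splitting, which is the right fix for the unit-overlap in measurability) and applying Azuma--Hoeffding, then linearising in $s$. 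Your treatment of the existence part also differs and is arguably more direct: the paper gets the nonemptiness of the zero set abstractly (equilibrium states exist because $I_R$ is a good rate function), whereas you prove $I_R^V(\sigma_V)=0$ by Jensen's inequality under the tilted measure plus Ces\`aro convergence of $\SSSS_\tau F$. Your verification that a zero charges only $\h^\sS$ (exponential tightness and truncation of $\varPhi$) makes explicit a point the paper passes over without comment.

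Two technical steps need repair. The appeal to Dugundji's extension theorem is not legitimate: $\h^\sS$ is dense, not closed, in $\h$, and a bounded function continuous on $\h^\sS$ for the induced topology (such as $\SSSS_1g-g$, which involves $h_V$) need not extend continuously to $\h$. The cleanest fix is to read $I_R^V$, as the paper's own proof implicitly does, as the Legendre transform of $Q_R^V$ over $C_b(\h^\sS)$, in which case no extension is required; alternatively, extend $F$ from the compact set $X_\rho$ by Tietze and control the complement using exponential tightness and $\lag\varPhi,\sigma\rag<\infty$. Second, $\varPhi=|\cdot|_{\h^\sS}^{\vk}$ (it cannot be taken to dominate $c|\cdot|_{\h^\sS}^2$, only a small power, which is enough since $\{\varPhi<\infty\}=\h^\sS$) and $\varPhi\wedge M$ are merely lower semicontinuous on $\h$, so before using $\lag F,\sigma\rag\le Q_R^V(F)$ you must approximate $\varPhi\wedge M$ from below by bounded continuous functions and pass to the limit monotonically; and the uniform bound $\sup_M Q_R^V(\varPhi\wedge M)<\infty$ requires a Cauchy--Schwarz splitting of the weight $h_V(\uu_t)\le C\wwww(\uu_t)$ from the exponential factor, with the tightness exponent taken small enough to absorb the resulting factor $2$. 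With these standard adjustments your proof goes through; the martingale estimate itself (conditional centring of the blocks, measurability with respect to the shifted filtrations, uniformity over $\uu\in X_R$) is sound.
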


  {\it Step~3: Non-triviality of $I_R$}. We argue by contradiction. Let us assume that~$D_{I_R}$ is a singleton. 
  By Proposition~\ref{P:JNPS} with $V={\mathbf0}$, we have that the stationary measure~$\mu$ is the unique zero\,\footnote{Note that when $V={\mathbf0}$, we have $\la_V=1$, $h_V={\mathbf1}$, $I_R^V=I_R$, and $\mu_V=\mu$.} of $I_R$, so  $D_{I_R}=\{\mu\}$. Then~\eqref{Legtr} implies that $Q(V)=\lag V,\mu\rag$ for any $V\in C_b(\h)$. Let us choose   any non-constant~$V\in \UU_\delta$ such that $\lag V,\mu\rag=0$. Then $Q(V)=0$, and    limit~\eqref{a1.5} with $\psi={\mathbf1}$ implies that~$\la_V=e^{Q(V)}=1$ 
    and   
    \be\label{e52}
\sup_{t\ge0}\E_0 \exp\left(\int_0^t   V(\uu_\tau)\dd \tau\right)<\infty,
\ee
where $\E_0$ means that we consider the trajectory issued from the origin. Combining this with  the central limit theorem (see   Theorem 2.5 in \cite{DM2014} and Theorem~4.1.8 and Proposition~4.1.4  in~\cite{KS-book}), we get  $V= {\mathbf0}$. This contradicts the assumption that $V$ is non-constant and completes the proof of Theorem~\ref{T:MT}.
      \end{proof}

\section{Proof of the Main Theorem}\label{S:MTP} 
   {\it Step~1: Proof in the case    $\psi\in \UU$}. For any  $R>0$ and  non-constant   $\psi\in \UU$, we  
   denote   
$$
I_R^\psi(p)=
\inf   \{I_R(\sigma): \lag \psi, \sigma\rag=p, \,\sigma\in \ppp(\h) \}, \q  p\in \R,
$$ where $I_R $ is given  by \eqref{I_R}. Then  $Q_R(\beta \psi )$ is  convex    in $\beta \in \R$, and
using \eqref{Legtr},  it is straightforward to check that
$$
Q_R(\beta \psi )   =\sup_{  p \in \R}\left( \beta  p-I_R^\psi( p)\right) \q\text{for $\beta \in \R$}.  
$$
   By well-known properties of convex functions of a real variable    
   (e.g., see~\cite{RV_73}),  $Q_R(\beta \psi )$  is differentiable in $\beta \in \R $, except possibly on a countable set, the  right and left derivatives~$D^+Q_R(\beta \psi )$ and~$D^-Q_R(\beta \psi )$ exist  at any    $\beta $ and $D^-Q_R(\beta \psi ) \le D^+Q_R(\beta \psi )$. Moreover,     the following equality holds   for some $\beta, p \in \R$
 \be  \label{Q_R10}
Q_R(\beta \psi )  =  \beta  p-I_R^\psi( p)   
\ee if and only if $ p\in [D^-Q_R(\beta \psi ) , D^+Q_R(\beta \psi )]$.    Let us set  $\beta _0:= \delta/ (4\|\psi\|_{\ty})$, where~$\delta>0$ is  the constant in Theorem \ref{T:MT}. Then for any~$|\beta |\le \beta _0$, we have~$\beta \psi \in \UU_\delta\subset \VV$  and  $ Q_R(\beta \psi )$  does not depend on $R>0$; we set $Q(\beta \psi ):=Q_R(\beta \psi )$. Let us show that~$D^-Q(\beta \psi ) = D^+Q(\beta \psi )$ for any $|\beta |< \beta _0$, i.e.,  
$Q(\beta \psi )$ is differentiable at $\beta $.  Indeed, assume that $ p_1,  p_2 \in [D^-Q(\beta \psi ) , D^+Q(\beta \psi )] $. Then   equality~\eqref{Q_R10} holds  with $ p= p_i, i=1,2$. As $I_R$ is a good rate function,    there are measures $\sigma_i\in  \ppp(\h)$ such that~$\lag \psi,\sigma_i\rag=p_i$ and~$I_R(\sigma_i)=I_R^\psi( p_i), i=1,2.$ Thus
$$
Q(\beta \psi )  =  \beta  p_i-I_R^\psi( p_i)=\lag \beta \psi ,\sigma_i\rag -I_R(\sigma_i),
$$ i.e., $\sigma_1$ and $\sigma_2$ are equilibrium states corresponding to $V=\beta \psi $. 
As $\beta \psi \in \VV$, from Property~2 we derive that $\sigma_1=\sigma_2$, hence $ p_1= p_2$. Thus $Q(\beta  \psi)$ is differentiable at $\beta $ for any~$|\beta |< \beta _0$.
Let us define the convex function 
\begin{equation} \label{ratefu0}
Q^\psi(\beta ):=\begin{cases}  Q(\beta \psi ), & \text{for $|\beta |\le \beta _0  $},  \\ +\infty, & \text{for    $|\beta |> \beta _0  $ }  \end{cases}
\end{equation}and its Legendre transform
\be\label{ratefu}
I^\psi( p):=\sup_{\beta \in \R}\left( \beta  p-Q^\psi(\beta )\right) \q\text{for $ p\in \R$}.
\ee Then $I^\psi$  is a finite convex function  not depending on $R>0$. As    $Q^\psi(\beta )$ is differentiable    at  any $|\beta |< \beta _0$ and  \eqref{2.1a} holds with $Q=Q^\psi(\beta )$ (with respect to the    directed set   $(\Theta, \prec)$ defined in the proof of Theorem \ref{T:MT}), we see that the conditions of 
    Theorem~A.5 in \cite{JOPP} are satisfied\,\footnote{
Theorem~A.5 in \cite{JOPP} is stated in the case   $\Theta=\R_+$. However, the proof presented
there remains valid for  random variables indexed by a directed
set.}.  Hence, we have \eqref{Llimit} for any open subset~$O$ of the interval~$J^\psi:=  (D^+Q^\psi(-\beta _0) , D^-Q^\psi(\beta _0))$.

\medskip
  {\it Step~2: Proof in the case    $\psi\in C_b(\h)$}. Let us first define the rate function $I^\psi:\R\to \R_+$ in the case of a general function   $\psi\in C_b(\h)$.  To this end,   we  take  a sequence~$\psi_n\in \UU$ such that $\|\psi_n\|_\ty\le \|\psi\|_\ty$ and $\psi_n\to \psi$ in $C(K)$ for any compact~$K\subset \h$. The   argument  of the proof of property (a) in Section~5.6 in~\cite{JNPS-2014} implies that Property~1 holds with $V=\beta \psi$ for any $|\beta|\le \beta_0$, where~$\beta_0$ is defined  as in Step~1, and  for any compact set $\KK\subset \ppp(\h)$, we have
  \be\label{ratefu1}
\sup_{\sigma\in \KK}|\lag \psi_n-\psi,\sigma \rag|\to 0\quad \text{as $n\to\ty$}. 
  \ee
   Moreover, from the proof of Proposition~3.17 in~\cite{FK2006} it follows that  
    \be\label{karsahm}
   Q_R(\beta \psi_n )\to  Q_R(\beta \psi )\q \text{for  $|\beta|\le \beta_0$.}
  \ee
  This implies that $Q_R(\beta \psi )$ does not depend on $R$ when $|\beta|\le \beta_0$, so we can define the functions~$Q^\psi$ and $I^\psi$ by \eqref{ratefu0} and \eqref{ratefu}, respectively.
 
 \medskip

Let $J^\psi$ be the interval defined in Step 1.
   To establish limit \eqref{Llimit},
 it suffices to show that for any open subset~$O\subset J^\psi$   the following two inequalities hold
 \begin{align}
\limsup_{t\to\infty} \frac1t\log \sup_{\uu\in X_R} \pP_\uu\{\zeta_t^\psi\in O\}&\le -  I^\psi (O),\label{upx}\\
\liminf_{t\to\infty} \frac1t\log \inf_{\uu\in X_R} \pP_\uu\{\zeta_t^\psi\in O\}&\ge -  I^\psi (O),\label{lwx}
\end{align}where $\zeta_t^\psi:=\lag \psi,\zeta\rag$.
To prove \eqref{upx}, we first apply \eqref{UB}  for a closed subset  $F\subset \ppp(\h)$ defined by~$F=\{\sigma\in \ppp(\h): \lag \psi, \sigma\rag\in \overline O\}$, where $\overline O$ is the closure of~$O$ in $\R$: 
\begin{align}\label{upy}
\limsup_{t\to\infty} \frac1t\log \sup_{\uu\in X_R} \pP_\uu\{\zeta_t^\psi\in O\}&\le \limsup_{t\to\infty} \frac1t\log \sup_{\uu\in X_R} \pP_\uu\{\zeta_t^\psi\in \overline O\}\nonumber \\
&= \limsup_{t\to\infty} \frac1t\log \sup_{\uu\in X_R} \pP_\uu\{\zeta_t\in F\}  \nonumber \\
&\le -  I_R (F).
\end{align}As $Q_R(\beta \psi ) \le Q^\psi(\beta)$ for any $\beta\in \R$, we have 
\be\label{upy2}
   I^\psi (\overline O)\le I_R^\psi(\overline O). \ee It is straightforward to check that  
\be\label{upy1} 
     I_R^\psi(\overline O)=  I_R (F). \ee
 From  the continuity of $I^\psi$ it follows that  $  I^\psi (  O)= I^\psi (\overline O)$. Combining this with~\eqref{upy}-\eqref{upy1},   we get \eqref{upx}.

\medskip

To establish \eqref{lwx},  we first recall that   the exponential tightness property and Lemma~3.2  in~\cite{JNPS-2014} imply  that for any $a>0$ there is a compact $\KK_a\subset \ppp(\h)$ such that
  \be\label{zetap0}
 \limsup_{t\to\ty} \frac1t \log \sup_{\uu\in X_R}  \pP_\uu\{\zeta_t\in \KK_a^c\} \le-a.
 \ee Let us take any $p\in O$ and choose $\es>0$ so small that that $(p-2\es,p+2\es)\subset O$. Then for any $a>0$, we have
 \be\label{zetap}
 \pP_\uu\{\zeta_t^\psi\in O\} \ge \pP_\uu\{\zeta_t^\psi\in (p-2\es,p+2\es), \zeta_t\in \KK_a\}. 
 \ee By \eqref{ratefu1}, we can choose   $n\ge1$ so large that
$$
\sup_{\sigma\in \KK_a}|\lag \psi_n-\psi,\sigma \rag|\le \es.
$$Using    \eqref{zetap},  we get  
\begin{align}\label{zetap1}
\pP_\uu\{\zeta_t^\psi\in O\} &\ge \pP_\uu\{\zeta_t^{\psi_n}\in (p-\es,p+\es), \zeta_t\in   \KK_a\}\nonumber\\
&\ge \pP_\uu\{\zeta_t^{\psi_n}\in (p-\es,p+\es)\}- \pP_\uu\{ \zeta_t\in   \KK_a^c\}. 
\end{align}We need the following elementary property of convex functions;  see the Appendix for the proof.
 \begin{lemma}\label{e51}
 Let $J\subset \R$ be an open interval and $f_n:J\to\R$ be a sequence of convex functions   converging   pointwise to a  finite function $f$.  Then   we have
 \begin{align*}
\limsup_{n\to\infty}  D^+ f_n(x) &\le  D^+ f(x),  \\
\liminf_{n\to\infty}  D^- f_n(x) &\ge  D^- f(x) , \q x\in J. 
\end{align*}

 \end{lemma}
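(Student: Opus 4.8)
\textbf{Proof proposal for Lemma~\ref{e51}.} The plan is to reduce everything to the elementary slope inequality for convex functions and the monotone characterisations of one-sided derivatives. First I would record that the pointwise limit $f$ of convex functions is itself convex (immediate from passing to the limit in the convexity inequality $f_n(\lambda x+(1-\lambda)y)\le \lambda f_n(x)+(1-\lambda)f_n(y)$), so that $D^+f(x)$ and $D^-f(x)$ exist and are finite at every $x\in J$, $J$ being open. The key facts to invoke are that for a convex function $g$ on $J$ and any $x\in J$,
\be\label{e51-aux}
D^+g(x)=\inf_{h>0,\,x+h\in J}\frac{g(x+h)-g(x)}{h},\qquad D^-g(x)=\sup_{h>0,\,x-h\in J}\frac{g(x)-g(x-h)}{h},
\ee
and that the difference quotient $\frac{g(x+h)-g(x)}{h}$ is nondecreasing in $h$ while $\frac{g(x)-g(x-h)}{h}$ is nonincreasing in $h$; in particular $D^+g(x)\le \frac{g(x+h)-g(x)}{h}$ and $D^-g(x)\ge \frac{g(x)-g(x-h)}{h}$ for all admissible $h>0$.

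For the first inequality, fix $x\in J$ and $\varepsilon>0$. By \eqref{e51-aux} applied to $f$, choose $h>0$ with $x+h\in J$ and $\frac{f(x+h)-f(x)}{h}\le D^+f(x)+\varepsilon$. For every $n$, convexity of $f_n$ gives $D^+f_n(x)\le \frac{f_n(x+h)-f_n(x)}{h}$; letting $n\to\infty$ and using $f_n(x+h)\to f(x+h)$, $f_n(x)\to f(x)$ yields
$$
\limsup_{n\to\infty}D^+f_n(x)\le \frac{f(x+h)-f(x)}{h}\le D^+f(x)+\varepsilon.
$$
Since $\varepsilon>0$ is arbitrary, $\limsup_{n\to\infty}D^+f_n(x)\le D^+f(x)$.

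For the second inequality, fix $x\in J$ and $\varepsilon>0$, and by \eqref{e51-aux} choose $h>0$ with $x-h\in J$ and $\frac{f(x)-f(x-h)}{h}\ge D^-f(x)-\varepsilon$. For every $n$, convexity of $f_n$ gives $D^-f_n(x)\ge \frac{f_n(x)-f_n(x-h)}{h}$; letting $n\to\infty$ gives
$$
\liminf_{n\to\infty}D^-f_n(x)\ge \frac{f(x)-f(x-h)}{h}\ge D^-f(x)-\varepsilon,
$$
and again arbitrariness of $\varepsilon$ finishes the argument. There is no genuine obstacle here: the only point requiring care is keeping straight the direction of monotonicity of the difference quotients—right quotients increase, left quotients decrease—so that the one-sided derivative is bounded by a fixed difference quotient in the correct sense, which is exactly what lets the pointwise convergence $f_n\to f$ (needed only at the three points $x-h$, $x$, $x+h$) pass to the limit.
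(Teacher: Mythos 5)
Your proof is correct and rests on the same mechanism as the paper's: bound $D^\pm f_n(x)$ by a fixed difference quotient chosen so that the corresponding quotient of $f$ is within $\varepsilon$ of $D^\pm f(x)$, then pass to the limit using pointwise convergence at the two relevant points. The paper merely packages this as a proof by contradiction and treats only the first inequality (the second being symmetric), so your direct, slightly more complete write-up is essentially the same argument.
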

 This lemma implies that, for sufficiently large $n\ge1$, we have 
 $$
 (p-\es,p+\es) \subset J^{\psi_n}=  (D^+Q^{\psi_n}(-\beta ^n_0) , D^-Q^{\psi_n}(\beta ^n_0)),$$ where  $\beta _0^n:= \delta/ (4\|\psi_n\|_{\ty})$.  Hence
 the result of Step 1 implies that
$$
 \lim_{t\to\infty} \frac1t\log   \pP_\uu\{\zeta_t^{\psi_n}\in (p-\es,p+\es)\}= -I^{\psi_n}((p-\es,p+\es))
  $$ uniformly with respect to $\uu\in X_R$. As 
  $$\limsup_{n\to\ty}Q^{\psi_n}(\beta)\le Q^{\psi}(\beta), \q \beta\in \R, $$
  we have   
  $$\liminf_{n\to\ty}I^{\psi_n}(q)\ge I^{\psi}(q), \q q\in \R. $$
  This implies that 
 $$
\liminf_{n\to\ty}  I^{\psi_n}((p-\es,p+\es))\ge I^{\psi}((p-\es,p+\es)).
  $$ Thus we can choose  $n\ge1$   so large that 
$$
 \liminf_{t\to\infty} \frac1t\log  \inf_{\uu\in X_R}  \pP_\uu\{\zeta_t^{\psi_n}\in (p-\es,p+\es)\}\ge -I^{\psi}((p-\es,p+\es))-\es. 
  $$
  Combining    this  with \eqref{zetap1} and \eqref{zetap0} and choosing $a>I^{\psi}((p-\es,p+\es))+\es$, we obtain
  $$
   \liminf_{t\to\infty} \frac1t\log  \inf_{\uu\in X_R} \pP_\uu \{\zeta_t^{\psi}\in O \}\ge   -I^{\psi}((p-\es,p+\es))-\es .
  $$
Since $p\in O$ is arbitrary and  $\es>0$ can be chosen arbitrarily small, we get \eqref{lwx}.

\medskip
  {\it Step~3:  The interval $J^\psi$}.  Let us    show that if   $\psi\in C^q_b(\h),$ $ q \in (0,1]$ is non-constant, then  the interval $J^\psi= (D^+Q^\psi(-\beta_0 ) , D^-Q^\psi(\beta_0))$ is non-empty and contains the point $\lag \psi, \mu \rag$. Clearly we can assume that $\lag \psi, \mu \rag=0$. As~$Q^\psi(0)=0$, it is sufficient to show that $\beta=0$ is the only point of the interval $[-\beta_0, \beta_0]$, where~$Q^\psi(\beta)$ vanishes. Assume the opposite. Then, replacing $\psi$ by $-\psi$ if needed, we can suppose that there is $\beta\in (0, \beta_0]$ such that $Q^\psi(\beta)=0$. As in Step 3 of Theorem \ref{T:MT}, this implies
  $$
\sup_{t\ge0}\E_0 \exp\left(\beta \int_0^t   \psi(\uu_\tau)\dd \tau\right)<\infty
$$
and $\psi\equiv 0$. This contradicts our assumption that $\psi$ is non-constant and completes the proof of the Main Theorem.

\section{Checking conditions of  Theorem \ref{T:5.3}}\label{S:2}

The proof of Theorem~\ref{T:1.1} is based on an application   of Theorem \ref{T:5.3}.
In this  section,    we verify    the growth condition, the uniform irreducibility property, and   the   existence of an eigenvector    for the following generalised Markov family of     transition kernels (see Definition~\ref{D:5.2})  
$$
P_t^V(\uu,\Gamma)=(\PPPP_t^{V*} \delta_\uu ) (\Gamma),\quad  V\in C_b(\h),\,\, \Gamma\in \BBBBB(\h),\,\,  \uu\in \h,\,\,t\ge0 
$$ in the phase space
$X=\h $ endowed with  a sequence of compacts     $X_R= B_{\h^\sS}(R)$, $R\ge1$ and  a weight function~$\wwww$ defined by~\eqref{1.1}. The uniform Feller property is the most   delicate condition to check in Theorem~\ref{T:5.3},   it will be established   in Section~\ref{S:3}. 
In the rest of the paper,     we shall always assume  that the hypotheses of  Theorem  \ref{T:MT} are fulfilled.

\subsection{Growth condition}
\label{S:2.2}

Since we take  $X_R= B_{\h^\sS}(R)$,   the set $X_\ty$ in  the growth condition in  Theorem~\ref{T:5.3}    will be   equal to $\h^\sS$ which  is dense in $\h$.  For any $\uu\in \h^\sS$ and $t\ge0$, we have~$\uu(t;\uu)\in \h^\sS$, so      the measure $P_t^V(\uu,\cdot)$ is concentrated on $\h^\sS$.  As $V$ is a bounded function, condition \eqref{5.9} is verified. Let us show that         estimate \eqref{5.8} holds    for any       $V$ with a    sufficiently small oscillation. 
 \begin{proposition} \label{P:2.4} 
 There is a constant $\delta>0$ and an integer  $R_0\ge1$ such that, 
for any $V\in C_b(\h)$ satisfying $\Osc(V)< \delta$, 
we have 
\begin{align}
&\sup_{t\ge0}
\frac{\|\PPPP_t^V\wwww\|_{L_\wwww^\infty}}{\|\PPPP_t^V{\mathbf1}\|_{R_0}}<\infty,\label{a5.8} 
\end{align}    where $\mathbf 1$ is the function on~$\h$ identically equal to~$1$ and $\|\cdot\|_{R_0}$ is the $L^\ty$ norm on $X_{R_0}$.

\end{proposition}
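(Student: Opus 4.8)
The plan is to prove \eqref{a5.8} by combining two ingredients: a lower bound on $\|\PPPP_t^V{\mathbf1}\|_{R_0}$ that does not degenerate in $t$, and an upper bound on $\|\PPPP_t^V\wwww\|_{L_\wwww^\infty}$ that grows at most like the same exponential rate. The key observation is that since $\Osc(V)<\delta$, one may write $V = c + V_0$ with $c$ a constant and $\inf V_0 = 0$, $\sup V_0 < \delta$; the constant $c$ factors out of both numerator and denominator as $e^{ct}$ and cancels, so without loss of generality we may assume $0 \le V \le \delta$. Then trivially $\PPPP_t^V{\mathbf1}(\uu) \ge \E_\uu\exp(0) = 1$ uniformly, which already gives $\|\PPPP_t^V{\mathbf1}\|_{R_0} \ge 1$ for all $t$. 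Hence it suffices to show $\sup_{t\ge0}\|\PPPP_t^V\wwww\|_{L_\wwww^\infty} < \infty$, i.e. a bound of the form $\PPPP_t^V\wwww(\uu) \le C\wwww(\uu)$ for all $\uu\in\h^\sS$, $t\ge0$, with $C$ independent of $t$ (and of $V$ in the relevant class). Since $V\le\delta$, we have $\PPPP_t^V\wwww(\uu) \le e^{\delta t}\,\PPPP_t\wwww(\uu)$, which is not yet enough because of the factor $e^{\delta t}$; the improvement must come from a genuine \emph{contraction} estimate for $\PPPP_t\wwww$ together with choosing $\delta$ small relative to the contraction rate.

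First I would establish a supermartingale-type estimate for the weight function under the flow itself. Recall $\wwww(\uu) = 1 + |\uu|_{\h^\sS}^2 + \ees^4(\uu)$. Using the energy balance for the NLW equation (Itô's formula applied to $\ees(\uu_t)$) one gets, thanks to the dissipativity conditions \eqref{1.5}--\eqref{1.6} and the choice $\nu < (\lm_1\wedge\gamma)/8$, an exponential drift inequality of the form $\E_\uu\ees^4(\uu_t) \le e^{-ct}\ees^4(\uu) + C$ for some $c>0$; similarly, the estimates for the $\h^\sS$-norm referenced in Section~\ref{S:moments} (the footnote after \eqref{1.1}) give $\E_\uu|\uu_t|_{\h^\sS}^2 \le e^{-ct}|\uu|_{\h^\sS}^2 + C(1+\ees^4(\uu))$ or similar — the point being a drift condition $\PPPP_t\wwww(\uu) \le q(t)\wwww(\uu) + C$ with $q(t)\to 0$. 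Pick $t_0$ with $q(t_0) =: q < 1$. Then choosing $\delta$ so small that $e^{\delta t_0}q < 1$ and also $e^{\delta t_0}q$ stays below $1$, the operator $\PPPP_{t_0}^V$ satisfies $\PPPP_{t_0}^V\wwww \le e^{\delta t_0}(q\wwww + C) = q'\wwww + C'$ with $q' < 1$. Iterating over $n$ steps, $\PPPP_{nt_0}^V\wwww \le (q')^n\wwww + C'/(1-q')$, and interpolating over the remaining time in $[0,t_0]$ (using the crude bound $\PPPP_s^V\wwww \le e^{\delta s}\PPPP_s\wwww \le e^{\delta t_0}(\PPPP_s\wwww)$ and a uniform-on-$[0,t_0]$ bound $\PPPP_s\wwww \le C''\wwww$, which follows from the same drift estimate) yields $\sup_{t\ge0}\PPPP_t^V\wwww(\uu) \le C\wwww(\uu)$, as required.

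The main obstacle is getting the drift inequality $\PPPP_t\wwww(\uu) \le q(t)\wwww(\uu) + C$ with a decay factor $q(t)<1$ \emph{for the full weight} $\wwww$, which bundles the $\h^\sS$-norm together with $\ees^4$; the $\h^\sS$-component does not close by itself (higher Sobolev norms are controlled only modulo lower-order energy terms, because of the cubic-type nonlinearity with $\rho<2$), so one needs the combined Lyapunov structure — the $\ees^4$ term must dominate the error produced when estimating $|\uu_t|_{\h^\sS}^2$. This is precisely the kind of subtle Sobolev-norm estimate the introduction warns about, and it is deferred to Section~\ref{S:moments}; here I would simply invoke those moment bounds. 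A secondary (but purely bookkeeping) point is tracking that the constant $\delta$ and the integer $R_0 = \lceil \sqrt{C'/(1-q')}\rceil$-type quantity can be fixed uniformly over all $V$ with $\Osc(V)<\delta$, which is immediate since the only property of $V$ used is $\sup V - \inf V < \delta$ after the reduction to $0\le V\le\delta$.
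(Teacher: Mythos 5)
The reduction in your opening paragraph is where the argument breaks, and the gap is not repairable within your scheme. After normalising $0\le V\le\delta$, you replace the denominator in \eqref{a5.8} by the trivial bound $\|\PPPP_t^V\mathbf 1\|_{R_0}\ge 1$ and claim that it then suffices to prove $\sup_{t\ge0}\|\PPPP_t^V\wwww\|_{L_\wwww^\infty}<\infty$. That statement is false in general: since $\PPPP_t^V\wwww(\uu)\ge \PPPP_t^V\mathbf 1(\uu)=\E_\uu\exp\bigl(\int_0^t V(\uu_s)\dd s\bigr)$, Jensen's inequality and the mixing of the flow give growth at least like $e^{t\lag V,\mu\rag}$, and after your normalisation $\lag V,\mu\rag>0$ for every $V$ not $\mu$-a.e.\ equal to its infimum; equivalently, by Theorem~\ref{T:1.1} the numerator grows like $\la_V^t$ with $\la_V>1$. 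The same defect appears concretely in your iteration: from $\PPPP_{t_0}^V\wwww\le q'\wwww+C'$ you cannot conclude $\PPPP_{nt_0}^V\wwww\le (q')^n\wwww+C'/(1-q')$, because composing the estimate produces the inhomogeneous terms $C'(q')^{j}\,\PPPP_{(n-1-j)t_0}^V\mathbf 1$, and $\PPPP_s^V\mathbf 1$ is not $\le 1$ (the Feynman--Kac semigroup is not sub-Markovian when $V\ge0$): it can be as large as $e^{\delta s}$ and genuinely grows like $\la_V^s$, so the resulting series diverges exponentially in $n$. In short, numerator and denominator in \eqref{a5.8} both grow exponentially at the same rate; the content of the proposition is precisely that the growth of $\PPPP_t^V\wwww(\uu)$ is dominated by $\wwww(\uu)$ times the growth of $\|\PPPP_t^V\mathbf 1\|_{R_0}$, and they cannot be estimated separately.

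The missing ingredient, which is the heart of the paper's proof, is a comparison between $\PPPP_t^V\mathbf 1$ at an arbitrary point of $\h^\sS$ and its supremum over the compact set $X_{R_0}$: one first shows $\sup_{t\ge0}\|\PPPP_t^V\mathbf 1\|_{L_\wwww^\infty}/\|\PPPP_t^V\mathbf 1\|_{R_0}<\infty$ by introducing the hitting time $\tau(R_0)=\inf\{t\ge0:|\uu_t|_{\h^\sS}\le R_0\}$, proving the exponential moment bound $\E_\uu e^{\delta_0\tau(R_0)}\le C\wwww(\uu)$ (Lemma~\ref{L:5.1}, which is also what determines $R_0$), and using the strong Markov property at $\tau(R_0)$; the smallness condition on $\Osc(V)$ is needed exactly so that the factor $e^{\|V\|_\infty\tau(R_0)}$ accumulated before reaching $X_{R_0}$ is absorbed by this moment bound. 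With that in hand, the drift step is run keeping the term $\PPPP^V\mathbf 1$ rather than a constant, namely $\PPPP_{Tk}^V\wwww\le q\,\PPPP_{T(k-1)}^V\wwww+Ce^{T\delta}\PPPP_{T(k-1)}^V\mathbf 1$ with $q<1$ (using \eqref{e30}, $\delta\le\alpha/2$, and the monotonicity of $t\mapsto\PPPP_t^V\mathbf 1$ for $V\ge0$), which upon iteration yields $\PPPP_{Tk}^V\wwww\le q^k\wwww+C''\PPPP_{Tk}^V\mathbf 1$; dividing by $\|\PPPP_{Tk}^V\mathbf 1\|_{R_0}$ and applying the first comparison gives \eqref{a5.8} along the times $Tk$, with a routine extension to all $t$. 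Your remaining ingredients (cancellation of the additive constant in $V$, the Lyapunov estimate for the full weight $\wwww$, uniformity in $V$) are fine, but without the return-time comparison the proof cannot close.
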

 
\begin{proof} Without loss of generality, we can assume
that
  $V\ge0$ and $\Osc(V)=\|V\|_\infty$. Indeed, it suffices to replace  $V$ by $V-\inf_H V$. We   split    the proof of~\eqref{a5.8}   into two steps. 

\medskip

{\it Step 1}. Let us   show that there are  $\delta_0>0$ and   $R_0\ge1$  such that 
\begin{equation} \label{6.0015}
\sup_{t\ge0}
\frac{\|\PPPP_{t}^V{\mathbf1}\|_{L_\wwww^\infty}}{\|\PPPP_{t}^V{\mathbf1}\|_{R_0}}<\infty,
\end{equation} provided that     $\|V\|_\infty< \delta_0$. 
To prove this, we  introduce the  stopping time
$$
 \tau(R)=\inf\{t\ge0: |\uu_t|_{\h^\sS}\le R \}
$$and   use the following result.  
 \begin{lemma}\label{L:5.1}
 There are  positive  numbers $\delta_0$,   $C$, and  $R_0$ such that
 \be\label{8.1}
\E_\uu e^{\delta_0\tau(R_0)}\le C\wwww(\uu),  \q \uu\in \h^\sS.
\ee
 \end{lemma} We omit the proof of this lemma,  since it is carried out  by   standard arguments, using the  Lyapunov function~$\wwww$ and   estimate  \eqref{e30} for $m=1$ (see Lemma~3.6.1 in~\cite{KS-book}).  
Setting $G_t:=\{\tau(R_0)> t\}$ and 
\begin{equation}\label{S6ogt}
\Xi_V(t):=\exp \left(\int_0^tV(\uu_s)\dd s\right),
\end{equation} we get   
\begin{equation}
\PPPP_t^V{\mathbf1}(\uu)=\E_\uu\Xi_V(t)
=\E_\uu \bigl\{\I_{G_t}\Xi_V(t)\bigr\}+\E_\uu\bigl\{\I_{G_t^c}\Xi_V(t)\bigr\}=:I_1+I_2.
\label{6.16}
\end{equation}
  Since $V\ge0$, we have    $\PPPP_t^V{\mathbf1}(\uu)\ge1$. Combining this with    \eqref{8.1} and $\|V\|_\infty < \delta_0$, we obtain  for any $\uu\in \h^\sS$   
$$
I_1\le \E_\uu \Xi_V\bigl(\tau(R_0)\bigr)\le \E_\uu\exp\bigl(\delta_0\tau(R_0)\bigr)\le C\,\wwww(\uu)
\le C\,\wwww(\uu)\,\|\PPPP_t^V{\mathbf1}\|_{R_0}. 
$$
The strong Markov property and \eqref{8.1} imply 
\begin{align*}
I_2&\le \E_\uu\bigl\{\I_{G_t}\Xi_V(\tau(R_0))\,\E_{\uu(\tau(R_0))}\Xi_V(t)\bigr\}
 \\&\le\E_\uu  \{e^{\delta_0\tau(R_0)} \}\,\|\PPPP_t^V{\mathbf1}\|_{R_0} \le C\,\wwww(\uu)\,\|\PPPP_t^V{\mathbf1}\|_{R_0} ,
\end{align*}
where we write $\uu(\tau(R_0))$ instead of $\uu_{\tau(R_0)}$. Using \eqref{6.16} and the    estimates for $I_1$ and $I_2$, we get~\eqref{6.0015}.

\medskip

{\it Step~2}.  
To prove~\eqref{a5.8}, we set $\delta:= \delta_0\wedge (\alpha/2)$ and   assume that~$\|V\|_\infty < \delta$ and~$t=Tk$, where $k\ge1$ is an integer and $T>0$ is so large that $q:=2e^{-T\frac\alpha2}<1$. Then, using the Markov property and~\eqref{e30}, we get 
\begin{align*}
\PPPP_{Tk}^V\wwww(\uu) &\le  e^{T\delta} \E_\uu \left\{\Xi_V(T(k-1)) \wwww(\uu_{Tk}) \right\}\\&=  e^{T\delta} \E_\uu \left\{\Xi_V(T(k-1))  \E_{\uu(T(k-1))} \wwww(\uu_T)  \right\} \\&\le e^{T\delta} \E_\uu \left\{\Xi_V(T(k-1))   [2e^{-T\al   }\we(\uu_{T(k-1)})+C_1] \right\} \\&\le q \PPPP_{T(k-1)}^V\wwww(\uu)+ e^{T\delta}  C_1 \PPPP_{T(k-1)}^V{\mathbf1}(\uu).
\end{align*} Iterating this and using fact that $V\ge0$, we obtain
\begin{align*}
\PPPP_{Tk}^V\wwww(\uu)  \le q^k  \wwww(\uu)+ (1-q)^{-1}e^{T\delta}  C_1 \PPPP_{Tk}^V{\mathbf1}(\uu).
\end{align*} Combining this with \eqref{6.0015}, we  see that 
$$
A:=\sup_{k\ge0}
\frac{\|\PPPP_{Tk}^V\wwww\|_{L_\wwww^\infty}}{\|\PPPP_{Tk}^V{\mathbf1}\|_{R_0}}<\infty.  
$$ 
To derive \eqref{a5.8} from this, we use the semigroup property and the fact that $V$ is  non-negative and bounded:
\begin{align*}
\|\PPPP^V_t\we\|_{L_\we^\infty} &=\|\PPPP^V_{t-Tk}(\PPPP^V_{Tk}\we)\|_{L_\we^\infty} \le  C_2\|\PPPP^V_{Tk }\we\|_{L_\we^\infty},\\
\|\PPPP_t^V{\mathbf1}\|_{R_0}&\ge\|\PPPP_{Tk}^V{\mathbf1}\|_{R_0},
\end{align*}where $k\ge0$ is such that $Tk\le t< T(k+1)$  and 
$$
 C_2:=\sup_{s\in[0,T]}\|\PPPP^V_s\we\|_{L_\we^\infty}\le e^{T\|V\|_\ty}  \sup_{s\in[0,T]}\|\PPPP_s\we\|_{L_\we^\infty}<\ty.
$$ So we get   
$$
\sup_{t\ge0}
\frac{\|\PPPP_t^V\wwww\|_{L_\wwww^\infty}}{\|\PPPP_t^V{\mathbf1}\|_{R_0}}\le  C_2A<+\ty.
$$This completes the proof of the proposition.

 \end{proof}

 \subsection{Uniform irreducibility}
\label{S:2.1}

In this section, we show that the family $\{P_t^V\}$  satisfies the uniform irreducibility condition   with respect to the sequence of compacts~$\{X_R\}$.
Since~$V$ is bounded, we have
$$
P_t^V(\uu,\ddd \vv)\ge e^{-t\|V\|_\infty}P_t(\uu,\ddd \vv),\quad\mbox{$\uu\in \h$},
$$where $P_t(\uu,\cdot)$  stands for the transition function of~$(\uu_t,\IP_\uu)$. 
 So it suffices to establish  the uniform irreducibility for $\{P_t\} $. 
 \begin{proposition}\label{P:2.2}
For any  integers $\rho, R\ge1$   and any   $r>0$, there are  positive numbers~$l=l(\rho,r,R)$ and~$p=p(\rho,r)$ such that
\begin{equation} \label{aa0}
P_l(\uu,B_{\h}(\hat \uu,r))\ge p\quad\mbox{for all~$\uu\in X_R ,\,\hat \uu\in X_\rho$}.
\end{equation} 
 \end{proposition}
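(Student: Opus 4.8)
The plan is to combine a soft reduction with a quantitative control argument. First I would use the dissipativity of the Lyapunov function $\wwww$ to reduce the assertion to the case where the starting point lies in a \emph{universal} bounded subset of $\h^\sS$; this is precisely what makes the probability $p$ independent of $R$, while the time needed to enter that universal set absorbs the $R$–dependence into $l$. Then, on that universal set, I would build a finite–dimensional feedback control that steers the low Fourier modes along a prescribed smooth path to the low modes of $\hat\uu$, while the high modes remain small by themselves because the noise, the force $h$ and the nonlinearity $f$ are all almost finite–dimensional when measured in $\h$. Finally a Girsanov change of measure — legitimate because $b_j>0$ for every $j$ — turns this deterministic construction into a positive lower bound for $P_l(\uu,B_\h(\hat\uu,r))$.

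\emph{Reduction.} By \eqref{e30} with $m=1$ one has $\E_\uu\wwww(\uu_t)\le 2e^{-\al t}\wwww(\uu)+C_1$ for all $\uu\in\h^\sS$ and $t\ge0$. Since $\wwww$ is bounded on $X_R$, there is $T_0=T_0(R)$ with $\E_\uu\wwww(\uu_{T_0})\le C_1+1$ for all $\uu\in X_R$, whence $\pP_\uu\{\uu_{T_0}\in X_{R_1}\}\ge\tfrac12$ for the universal radius $R_1:=(2C_1+2)^{1/2}\vee1$. By the strong Markov property at time $T_0$ it then suffices to produce $l'=l'(\rho,r)>0$ and $p'=p'(\rho,r)>0$ such that $P_{l'}(\vv,B_\h(\hat\uu,r))\ge p'$ for all $\vv\in X_{R_1}$ and $\hat\uu\in X_\rho$; this yields the proposition with $l=T_0(R)+l'$ and $p=p'/2$.

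\emph{Feedback control and conclusion.} Fix $\vv\in X_{R_1}$, $\hat\uu\in X_\rho$, take $l':=1$, and let $Q_N:=\mathrm{Id}-P_N$. Using the Sobolev bounds of Section~\ref{S:moments}, fix an a priori radius $R_2=R_2(R_1,\rho)$ for $\sup_{t\le1}|\uu_t|_\h$ along the controlled trajectory below, and then choose $N=N(\rho,r)$ so large that $|Q_N\uu'|_\h<r/8$ for all $\uu'\in X_{R_1}\cup X_\rho$ and $\|Q_Nh\|+\sup_{\|w\|_1\le R_2}\|Q_Nf(w)\|<r/8$; the last bound uses $h\in H^1_0\subset L^2$ and the fact that $f$ maps $B_{H^1}(R_2)$ into a precompact subset of $L^2$ (by the growth conditions \eqref{1.8}--\eqref{1.6} and the compact embedding $H^1\hookrightarrow L^p$, $p<6$). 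Let $\bar y\in C^\infty([0,1];H_N)$ be a cubic interpolation matching $(\bar y(0),\dot{\bar y}(0))=P_N\vv$ and $(\bar y(1),\dot{\bar y}(1))=P_N\hat\uu$ in $\h_N$, and set
\[
\eta(t):=P_Nf(\uu_t)-P_Nh+\ddot{\bar y}(t)+\gamma\dot{\bar y}(t)-\Delta\bar y(t)\ \in\ H_N,
\]
with $f$ replaced by a bounded truncation outside $B_{H^1}(2R_2)$, so that $\eta$ is bounded in $L^2([0,1];H_N)$ by a constant $K=K(\rho,r)$ and the construction is effective on the event — of $\tilde\pP$–probability close to $1$ by the choice of $R_2$ — that the trajectory stays in $B_{H^1}(2R_2)$. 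Under the measure $\tilde\pP$ obtained from $\pP_\vv$ by the Girsanov shift $\beta_j\mapsto\beta_j+\int_0^\cdot b_j^{-1}\eta_j\,\dd s$ for $j\le N$, the low–mode part of the solution coincides with $\bar y$ and hence equals $P_N\hat\uu$ at $t=1$, while $Q_N\uu_t$ solves the damped wave equation with right–hand side $Q_Nh-Q_Nf(u)+\p_t(Q_N\xi)$ and data $Q_N\vv$. The energy inequality for the damped wave semigroup (exponential contraction rate $\gamma/2$, uniform in the mode), the smallness bounds above, and $\sum_{j>N}b_j^2\to0$ give $\E^{\tilde\pP}\sup_{t\le1}|Q_N\uu_t|_\h^2<(r/8)^2$ for $N$ large, so $\tilde\pP\{\uu_1\in B_\h(\hat\uu,r)\}\ge\tfrac34$. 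Since the Cameron--Martin energy $\sum_{j\le N}\int_0^1 b_j^{-2}\eta_j^2\,\dd t\le K^2/\min_{j\le N}b_j^2=:M(\rho,r)<\infty$ (here $b_j>0$ is essential), the standard estimate $\pP_\vv(A)\ge e^{-M}\,\tilde\pP(A)^2$ (see e.g. \cite{KS-book}) applied to $A=\{\uu_1\in B_\h(\hat\uu,r)\}$ yields $p'=\tfrac{9}{16}e^{-M(\rho,r)}>0$, which completes the plan.

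\emph{Main obstacle.} The finite–dimensional controllability is elementary once the low/high splitting is in place; the genuine difficulty is the uniform a priori control of the Sobolev norms — the choice of $R_2$, i.e.\ bounding $\sup_{t\le1}|\uu_t|_\h$ along the perturbed trajectories uniformly over $\vv\in X_{R_1}$ and $\hat\uu\in X_\rho$, and then over $\uu\in X_R$ via the reduction step — because the NLW equation has no smoothing effect and is only weakly dissipative; this is where the estimates of Section~\ref{S:moments} and the weak dissipation enter crucially. One may equivalently replace the Girsanov step by a support–theorem argument in which $\xi$ is approximated in the appropriate path space by the primitive of $\eta$; the analytic difficulties are the same.
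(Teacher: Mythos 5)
Your reduction step (using \eqref{e30} and Chebyshev to enter a universal ball in time $T_0(R)$ with probability $\ge\tfrac12$, then the Markov property) is sound and is exactly the first step of the paper's proof. The genuine gap is in the Girsanov step. An absolutely continuous change of measure cannot make the low modes deterministic: under $\tilde\pP$ the driving noise $\p_t P_N\xi$ is still a (new) white noise, so the low-mode component of the solution is not $\bar y$ but $\bar y+w_t$, where $w$ is the stochastic response of the damped linear wave operator to the low-mode noise with zero data. Consequently the claim ``the low--mode part of the solution coincides with $\bar y$ and hence equals $P_N\hat\uu$ at $t=1$'' is false, and the bound $\tilde\pP\{\uu_1\in B_\h(\hat\uu,r)\}\ge\tfrac34$ (hence $p'=\tfrac{9}{16}e^{-M}$) is unjustified: you would additionally need a lower bound on the probability that the Gaussian fluctuation $w$ is small at time $1$, which is a positive constant depending on $N$ and the $b_j$'s but is in no way close to $1$. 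A second, related gap: the shift you actually apply uses a truncated nonlinearity, so the intended cancellation of $P_Nf(\uu_t)$ holds only on the event that the trajectory stays in $B_{H^1}(2R_2)$, and estimating the $\tilde\pP$-probability of that event again requires the low-mode analysis that was skipped. The argument is repairable (only positivity is needed, not $3/4$), but as written the key quantitative step does not hold.

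The cleaner route is the one you mention only in passing at the end, and it is the paper's: after the same Lyapunov reduction to a universal ball $X_d$, construct a \emph{deterministic} control and invoke the non-degeneracy of $\xi$ in path space. Concretely, the paper first steers any $\vv\in X_d$ into a small neighbourhood of the stationary point $\hat\vv=[\hat v,0]$ by the Foia\c{s}--Prodi feedback ${\mathsf P}_N[f(\tilde v)-f(\hat v)]$ over a long time $\tilde m$ (Proposition 6.5 of \cite{DM2015}), then uses an explicit exact-controllability path from $\hat\vv$ to smooth targets $\hat\uu$ over one unit of time, obtains uniformity in $\vv\in X_d$, $\hat\uu\in X_\rho$ by a compactness/contradiction argument (which is why smooth targets suffice), and concludes from $\pp\{\|\xi-\ph_\vv\|_{C(0,m;H^1)}<\De\}>0$ (Lemma~\ref{e39}). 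Note also that this concatenated control needs a stabilisation time $\tilde m=\tilde m(d,\kp)$, so even the intermediate time cannot be taken equal to $1$ in the paper's scheme; your idea of prescribing the low-mode trajectory directly over time $1$ could avoid that, but only after the probabilistic step above is fixed.
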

 \begin{proof}   
 
\medskip
  Let us show that,  
for sufficiently large      $d\ge1$  and any
    $R\ge1$,  there is a time $k=k(R)$    such that  
    \begin{equation}\label{aa1}
P_k(\uu, X_{d})\ge \frac12, \quad\mbox{$\uu\in X_R$}.
\end{equation} 
Indeed,  by         \eqref{e30} for $m=1$, we have   
$$
\e_\uu|\uu_t|_{\h^\sS}^2 \leq \e_\uu \wwww(\uu_t) \leq 2e^{-\al  t}\we(\uu)+C_1. 
$$  Combining this with the estimate 
\be\label{aa01}
|\ees(\uu)|\leq C_2(1+|\uu|_\h^4),
\ee  we get
$$
\e_\uu|\uu_t|_{\h^\sS}^2 \leq C_3 e^{-\al t} R^{16} +C_1,   \q \uu \in X_R.
$$ The Chebyshev inequality implies that 
$$P_{t}(\uu, X_d   )\ge 1-   d^{-2}( C_3 e^{-\al t} R^{16} +C_1 ).
$$Choosing $t=k$ and $d$   so large that $e^{-\al k}R^{16} \le1$ and $d^2>2( C_3 +C_1) $, we obtain~\eqref{aa1}.
 
\medskip
    Combining \eqref{aa1} with  Lemma~\ref{e39} and   the Kolmogorov--Chapman relation, we get \eqref{aa0}  for $l=k+m$ and $p=q/2$.  
\end{proof}

 \begin{lemma}\label{e39}
For any integers $d,\rho\ge1$ and any $r>0$,   there are positive   numbers~$m=m(d,\rho,r)$ and~$q=q(d, \rho,r)$ such that
\begin{equation} \label{e6}
P_m(\vv,B_{\h}(\hat \uu,r))\ge q\quad\text{for all $\vv\in X_d, \,\hat \uu\in X_\rho$}.
\end{equation} 
\end{lemma}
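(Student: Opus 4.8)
The plan is to prove Lemma~\ref{e39} by a controllability argument for the NLW equation together with a suitable approximate continuity of the flow. The statement is the standard ``approximate controllability implies positivity of transition probabilities'' result, and the proof splits into a deterministic control step and a stochastic step relying on the non-degeneracy of the noise (all $b_j>0$).

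First I would reduce everything to a single target point and a single source ball. Fix $d,\rho\ge1$ and $r>0$. The key deterministic fact is: for the controlled equation
\[
\p_t^2u+\gamma\p_tu-\de u+f(u)=h(x)+\eta(t,x),\qquad u|_{\p D}=0,
\]
there exist a time $m$ and, for each initial point $\vv\in X_d$, a control $\eta\in L^2([0,m];H^1_0(D))$ (which may be taken of the form $\sum_{j} c_j(t)e_j(x)$ with finitely many modes) steering $\vv$ into the ball $B_\h(\hat\uu,r/2)$, uniformly over $\vv\in X_d$ and $\hat\uu\in X_\rho$; moreover the controls can be chosen bounded in $L^2([0,m];H^1_0)$ by a constant depending only on $d,\rho,r$. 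A clean way to get this is to use the dissipativity of the unforced flow: by Theorem~\ref{T-Davit} type estimates (or directly by the energy estimate behind \eqref{e30}), for $\eta\equiv0$ the flow issued from $X_d$ enters a fixed ball $B_\h(r_0)$ by a time $k_0=k_0(d)$; then from this fixed ball one connects to the smooth target $\hat\uu$ in bounded time by a direct interpolation control (write the desired trajectory $w(t)$ that equals the free solution up to time $k_0$ and then is a smooth path from $w(k_0)$ to $\hat\uu$, and set $\eta:=\p_t^2 w+\gamma\p_t w-\de w+f(w)-h$; one checks $\eta\in L^2H^1_0$ using $\hat\uu\in\h^\sS$ and $\sS>0$, which gives the needed regularity of $f(w)$ via \eqref{1.8} and Sobolev embedding in $D\subset\R^3$). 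The total time $m=m(d,\rho,r)$ and the control bound are then uniform over the relevant sets.

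The second step is the transfer from the control to the white noise. Let $\uu(t;\vv,\eta)$ denote the solution of the controlled equation and $\uu_t=\uu(t;\vv)$ the stochastic solution with $\eta$ replaced by $\xi$ given by \eqref{0.3}. By a standard perturbation estimate for the NLW flow (Gronwall in the energy norm $|\cdot|_\h$, using the local Lipschitz property of $f$ and the a priori bounds on both trajectories), there is $\e=\e(d,\rho,r)>0$ such that
\[
\sup_{t\in[0,m]}\Big|\,\xi(t)-\int_0^t\eta(s)\dd s\,\Big|_{H^1_0}<\e
\quad\Longrightarrow\quad |\uu_m-\uu(m;\vv,\eta)|_\h<r/2,
\]
hence $\uu_m\in B_\h(\hat\uu,r)$. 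Here I would first rewrite the equation in terms of $\tilde u=u-\zeta_\xi$ (or use the variation-of-constants/Duhamel form) so that only the difference of the forcing primitives enters. It then remains to bound below the probability of the event that the Brownian-type process $\xi(\cdot)$ stays $\e$-close in $C([0,m];H^1_0)$ to the given continuous path $t\mapsto\int_0^t\eta(s)\dd s$. Since $\eta$ uses only finitely many modes $e_1,\dots,e_{N}$ and $\BBB_1=\sum\lm_jb_j^2<\infty$ with every $b_j>0$, this is a positivity statement for a finite-dimensional Brownian motion (the modes $j\le N$) times a small-ball estimate for the Gaussian tail of the remaining modes $j>N$ in $H^1_0$; both are classical and give a strictly positive probability $q=q(d,\rho,r)>0$, uniform in the starting point because the control bound is uniform. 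Combining the two steps yields \eqref{e6}.

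The main obstacle is the deterministic controllability step with \emph{uniform} bounds and with controls of the required $L^2([0,m];H^1_0)$ regularity: one must make sure the constructed control lands in the right space despite the nonlinearity $f$ and the lack of smoothing, and that the time $m$ and the control norm depend only on $d,\rho,r$ and not on the particular $\vv,\hat\uu$. This is exactly where the dissipativity conditions \eqref{1.5}--\eqref{1.6}, the growth bound \eqref{1.8} (so that $f$ maps $H^1_0\cap H^{\sS+1}$ boundedly, via Sobolev embedding in dimension three with $\rho<2$), and the smoothness $\hat\uu\in X_\rho\subset\h^\sS$ all get used; the stochastic part is then routine.
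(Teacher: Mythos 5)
There is a genuine gap in your deterministic controllability step, and it is precisely the difficulty that the paper's proof is built to avoid. Your control is defined by $\eta:=\p_t^2 w+\gamma\p_t w-\de w+f(w)-h$ along an interpolating path $w$ joining the state $w(k_0)$ reached by the unforced dynamics to the target $\hat\uu\in X_\rho$. For this $\eta$ to be admissible it must be $H^1_0$-valued (the noise primitive $\xi$ lives in $C([0,m];H^1)$ since $\BBB_1<\infty$), hence you need $-\de w(t)\in H^1$, i.e.\ roughly $w(t)\in H^3$ in the first component. But $w(k_0)$ is no smoother than the flow preserves ($\h^\sS$ with $\sS<1-\rho/2$, so small), and the target itself only satisfies $\hat u_1\in H^{1+\sS}$, so $-\de \hat u_1\in H^{\sS-1}$, which is nowhere near $H^1$; your claimed check ``$\eta\in L^2H^1_0$ using $\hat\uu\in\h^\sS$ and $\sS>0$'' therefore fails ($\sS$ is a small exponent, not $\ge2$), and even $f(w)\in H^1$ is doubtful since $H^{1+\sS}\not\subset L^\infty$ for $\sS<1/2$. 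Relying on dissipativity of the unforced flow does not help, because the point it delivers in the absorbing ball is not smooth either.

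The paper resolves exactly this in two moves that are absent from your plan. First, it reduces the uniform bound \eqref{e6} to mere positivity \eqref{e3} for \emph{smooth} targets $\hat\uu\in\tilde X_\rho$ (with $C_0^\infty$ components) by a compactness--continuity contradiction argument, using that $X_d$, $X_\rho$ are compact in $\h$. Second, the control is built in two phases: a feedback stabilisation phase using the finite-dimensional Foia\c{s}--Prodi term ${\mathsf P}_N[f(\tilde v)-f(\hat v)]$ (automatically $H^1$-valued because it lives in the span of finitely many eigenfunctions) which steers any $\vv\in X_d$ into a $\kp$-neighbourhood of the stationary state $\hat\vv=[\hat v,0]$, where elliptic regularity gives $\hat v\in H^2$ and $-\de\hat v\in H^1$; and then an explicit interpolation $u(t)=a(t)\hat v+b(t)\hat u_1+c(t)\hat u_2$ from $\hat\vv$ to the smooth target, for which $-\de u+f(u)\in C(0,1;H^1)$ can indeed be verified. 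Your final stochastic step (Gronwall perturbation estimate plus positivity of the noise around a prescribed $H^1$-path) matches the paper's and is fine, and obtaining uniformity via uniform control bounds rather than compactness would also be acceptable; but without repairing the regularity of the control --- e.g.\ by stabilising to a smooth equilibrium with a finite-dimensional feedback and approximating $X_\rho$ by smooth targets --- the core of the argument does not go through.
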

\bp
   It is sufficient to prove that there is $m\geq 1$ such that
\be\label{e3}
P_m(\vv, B_\h(\hat\uu,r/2))>0 \q\text{ for all }\vv\in X_d, \, \hat \uu\in  \tilde X_\rho,
\ee where   $\tilde X_\rho=\{\uu=[u_1,u_2]\in X_\rho: u_1,u_2\in C_0^\iin(D)\}$.
Indeed, let us take this inequality for granted and assume that~\ef{e6} is not true. Then there are   sequences $\vv_j\in X_d$ and  $\hat \uu_j\in X_\rho$  such that
\be\label{e4} 
P_m(\vv_j, B_\h(\hat\uu_j,r))\to 0.
\ee
Moreover, up to extracting a subsequence, we can suppose that $\vv_j$ and $\hat \uu_j$ converge  in~$\h$. Let us denote by $\vv_*$ and $\hat \uu_*$ their   limits. Clearly, $\vv_*\in X_d$ and~$\hat\uu_*\in X_\rho$. Choosing $j\ge1$ so large that $|\hat\uu_j-\hat\uu_*|_\h<r/2$ and applying the Chebyshev        inequality, we get
\begin{align*}
P_m(\vv_*,B_\h(\hat \uu_* ,r))&\leq P_m(\vv_j,B_\h(\hat\uu_j, r/2))+\pp\{|\uu(m; \vv_j)-\uu(m;\vv_*)|_\h\geq r/2\}\\
&\leq P_m(\vv_j,B_\h(\hat\uu_j, r/2))+4/r^2  \,\e|\uu(m; \uu_j)-\uu(m;\vv_*)|^2_\h.
\end{align*}
Combining this with \ef{e4} and using the  convergence $\vv_j\to\vv_*$ and a density property, we arrive at a contradiction with \ef{e3}. Thus, inequality \ef{e6} is reduced to the derivation of~\ef{e3}. We shall prove the latter in three steps.

\medskip
{\it Step~1: Exact controllability.}
 In what follows, given any $\ph\in C(0, T;H^1 )$, we shall denote by~$S_\ph(t;\vv)$ the solution at time $t$ of the problem
$$
\p_t^2u+\gamma \p_tu-\de u+f (u)=h+\dt\ph, \q u|_{\partial D}=0, \q t\in [0,T]
$$
issued from $\vv$.
Let $\hat\vv=[\hat v, 0]$, where~$\hat v\in H^1 $ is  a solution of  $$
-\de\hat v+f (\hat v)=h(x).
$$
 In this step we   prove that for  any $\hat \uu=[\hat u_1, \hat u_2]\in \tilde X_\rho$, there is $\ph_*$ satisfying
\be\label{e7}
\ph_*\in C(0, 1;H^1)\q\text{ and } \q S_{\ph_*}(1;\hat\vv)=\hat\uu.
\ee
First note that, since the function $f $ is continuous from $H^1$ to $L^2$, we have 
$$
-\de\hat v=-f (\hat v)+h\in L^2 ,
$$
so that $\hat v\in H^2   $. Moreover, since $f $ is also continuous from $H^2$ to~$H^1$ (recall that $f$ vanishes at the origin), we have $f (\hat v)\in H^1 $. As $h\in H^1 $, it follows that
\be\label{e2}
-\de\hat v\in H^1.
\ee
Let us introduce the functions
\begin{align}
u(t)&=a(t)\hat v+b(t)\hat u_1+c(t)\hat u_2,\label{e8}\\
\ph_{*}(t)&=\int_0^t (\p_t^2u+\gamma \p_tu-\de u+f (u)-h ) \dd \tau, \nonumber
\end{align}
where $a, b,c\in C^\ty( [0,1], \R)$  satisfy
\begin{align*}
a(0)&=1, \q a(1)=\dt a(0)=\dt a(1)=0, \q  b(1)=1, \q b(0)=\dt b(0)=\dt b(1)=0,\\
\dt c(1)&=1, \q c(0)=c(1)=\dt c(0)=0.
\end{align*}
Then, we have 
 $[u(0),\dt u(0)]=\hat\vv$,      $ [u(1),\dt u(1)]=\hat\uu$, and  
  $S_{\ph_*}(1;\hat\vv)=\hat\uu$.  Let us show the first    relation in \ef{e7}. In view of \ef{e8} and the smoothness of the functions~$a, b$ and~$c$, we have
$$
\p_t^2u+\gamma \p_tu-h\in C(0,1;H^1)
$$
and thus it is sufficient to prove that
\be\label{e1}
-\de u+f (u)\in C(0, 1;H^1).
\ee
Since $u\in C(0, 1; H^2  )$, we have $f (u)\in C(0, 1; H^1 )$. Moreover, in view of \ef{e2} and the  smoothness of   $\hat u_1$ and $\hat u_2$, we have 	    $-\de u\in C(0, 1; H^1)$. Thus, inclusion \ef{e1} is established and we arrive at \ef{e7}. Let us note that by continuity and compactness, there is  $\kp=\kp(\hat\vv,\rho, r)>0$, not depending on~$\hat \uu\in \tilde X_\rho$,  such that 
\be\label{e14}
S_{\ph_{*}}(1;\vv)\in B_\h(\hat\uu,r/4)   \q \text{ for any }\vv\in B_\h(\hat\vv, \kp).
\ee

\medskip
{\it Step~2: Feedback stabilisation.} 
We now show that there is $\tilde m\geq 1$ depending only on $d$ and $\kp$ such that for any $\vv\in X_d$ there is $\tilde\ph_{\vv}$ satisfying
\be\label{e11}
\tilde\ph_\vv\in C(0, \tilde m; H^1)\q\text{ and }\q S_{\tilde\ph_{\vv}}(\tilde m, \vv)\in B(\hat\vv, \kp).
\ee
To see this, let us consider the flow $\tilde\vv(t;\vv)$ associated with the solution of the equation
\be\label{e12}
\p_t^2\tilde v+\gamma \p_t \tilde v-\de \tilde v+f (\tilde v)=h +{\mathsf P}_N[f (\tilde v)-f (\hat v)], \q t\in [0,\tilde m]
\ee
issued from $\vv\in X_d$, where  ${\mathsf P}_N$ stands for the orthogonal projection in $L^2 $ onto the subspace spanned by the functions~$e_1,e_2,\ldots,e_N$. Then, in view of Proposition 6.5 in \cite{DM2015}, for $N\geq N(|\hat\vv|_\h, d)$, we have
$$
|\tilde \vv(\tilde m;\vv)-\hat\vv|_\h^2\leq |\vv-\hat\vv|_\h^2\,e^{-\al \tilde m}\leq C_d\, e^{-\al \tilde m}< \kp
$$
for $\tilde m$ sufficiently large. It follows that \ef{e11} holds with the function 
$$
\tilde\ph_\vv(t)=\int_0^t {\mathsf P}_N[f (\tilde v)-f (\hat v)] \dd \tau.
$$

\medskip
{\it Step~3: Proof of \eqref{e3}.}
 Let us take $m=\tilde m+1$ and,  for any $\vv\in X_d$, define a   function $\ph_\vv(t)$   on the interval $[0, m]$ by
\begin{equation*} 
\ph_\vv(t)=\begin{cases}  \tilde \ph_\vv(t)   & \text{for $t\in[0, m-1] $},  \\ \tilde\ph_\vv(m-1)+\ph_*(t-m+1)  & \text{for    $t\in [m-1, m].$  }  \end{cases}
\end{equation*}
In view of \ef{e7}, \ef{e14}, and \ef{e11}, we have $\ph_\vv(t)\in C(0, m;H^1)$  and $S_{\ph_\vv}(m;\vv)\in B_\h(\hat\uu, r/2).$ Hence
 there is $\De>0$ such that $S_{\ph}(m;\vv)\in B_\h(\hat\uu, r/2)$  provided  $\|\ph-\ph_\vv\|_{C(0,m;H^1)}<\De$.
It follows that
$$
P_m(\vv, B_\h(\hat\uu,r/2))\geq \pp\{\|\xi-\ph_\vv\|_{C(0,m;H^1)}<\De\}.
$$
To complete the proof, it remains to note that, due to the non-degeneracy of~$\xi$, the term on the right-hand side of this inequality is positive.
\ep

\subsection{Existence of an eigenvector}
\label{S:2.3}

 For any   $m\ge1$, let us define     functions $\we_m, \tilde  \we_m:\h\to [1, +\iin]$ by 
 \begin{align}
\we_m(\uu)&=1+|\uu|_{\h^\sS}^{2m}+\ees^{4m}(\uu),  \label{e27}\\
\tilde \we_m(\uu)&=\we_m(\uu)+\exp(\kp\ees(\uu)), \q \uu\in \h,  \label{e32}
\end{align}
where  $\kp$ is the constant in Theorem \ref{T:1.1}.  The following proposition proves the existence of an eigenvector $\mu=\mu(t,V,m)  $ for the operator~$\PPPP_t^{V*}$ for any $t>0$. We shall see in Section~\ref{S:5} that the measure~$\mu$ actually  does not depend on~$t$ and~$m$.
 \begin{proposition}\label{e23}
 For any $t>0$, $V\in C_b(\h)$ and $m\geq 1$, the operator $\PPPP_t^{V*}$ admits an eigenvector $\mu=\mu(t,V,m) \in \ppp(\h)$ with a positive eigenvalue~$\lambda=\lambda(t,V,m)$: 
$$
\PPPP_t^{V*}\mu =\lambda  \mu.
$$  Moreover,  we have  
 \be\label{e24}
 \int_\h \tilde \wwww_m(\uu)\mu (\ddd\uu)<\iin,  
 \ee  
 \be\label{e25}
 \|\PPPP_t^V\we_m\|_{X_R}\int_{X_R^c}\we_{m}(\uu)\mu (\ddd\uu)\to 0\q \text{ as }R\to\iin.
 \ee
 \end{proposition}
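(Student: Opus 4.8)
The plan is to construct the eigenvector by a standard Bogolyubov–Krylov-type argument applied to a suitably renormalised Feynman–Kac evolution, combined with the Lyapunov estimate \eqref{e30} to get both tightness and the moment bounds \eqref{e24}–\eqref{e25}. Fix $t>0$, $V\in C_b(\h)$ and $m\geq 1$. For a probability measure $\sigma\in\ppp(\h)$ define the renormalised image $\Lambda_t^V\sigma := (\PPPP_t^{V*}\sigma)/(\PPPP_t^{V*}\sigma)(\h)$; since $V$ is bounded, $0<e^{-t\|V\|_\infty}\le (\PPPP_t^{V*}\sigma)(\h)\le e^{t\|V\|_\infty}$, so $\Lambda_t^V$ is a well-defined continuous map of $\ppp(\h)$ into itself (continuity in the weak topology follows from $\PPPP_t^V$ mapping $C_b(\h)$ into $C_b(\h)$, which is recalled in the text). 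A fixed point $\mu$ of $\Lambda_t^V$ is exactly an eigenvector of $\PPPP_t^{V*}$ with eigenvalue $\lambda := (\PPPP_t^{V*}\mu)(\h)>0$, so it suffices to produce such a fixed point.

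To get a fixed point I would iterate: start from $\delta_{\uu_0}$ with some fixed $\uu_0\in\h^\sS$, set $\mu_n := \frac1n\sum_{k=1}^n (\Lambda_t^V)^k \delta_{\uu_0}$, and extract a weakly convergent subsequence. The key point is tightness of $\{\mu_n\}$, and this is where the weight function enters. Using the Lyapunov-type bound \eqref{e30} (with the relevant $m$, and noting $\we_m$ and $\tilde\we_m$ have the same form as the weights there) together with $V\ge -\|V\|_\infty$, one obtains a uniform-in-$n$ estimate $\langle \tilde\we_m,\mu_n\rangle \le C(t,V,m,\uu_0)<\infty$: indeed $\PPPP_t^V\tilde\we_m \le e^{t\|V\|_\infty}\PPPP_t\tilde\we_m \le e^{t\|V\|_\infty}(\varrho\,\tilde\we_m + C)$ for some $\varrho<1$ once $t$ is replaced by a large multiple — and the general $t$ is handled exactly as in the proof of Proposition~\ref{P:2.4}, Step~2, by writing $t=Tk+s$ and iterating over blocks of length $T$ chosen so that the contraction factor $e^{T\|V\|_\infty}\varrho^{\,k}$ beats $1$. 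Since $\tilde\we_m$ has compact sublevel sets in $\h$ (its sublevel sets are bounded in $\h^\sS$, hence precompact in $\h$), the bound $\sup_n\langle\tilde\we_m,\mu_n\rangle<\infty$ gives tightness; let $\mu$ be a weak limit of a subsequence. A routine argument (the usual averaging trick: $\Lambda_t^V\mu_n - \mu_n \to 0$ in the appropriate sense, using continuity of $\Lambda_t^V$ and boundedness of the renormalising constants) shows $\mu$ is a fixed point of $\Lambda_t^V$, hence an eigenvector with $\lambda>0$.

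It remains to check \eqref{e24} and \eqref{e25}. For \eqref{e24}: by Fatou / lower semicontinuity of $\sigma\mapsto\langle\tilde\we_m,\sigma\rangle$ under weak convergence, $\langle\tilde\we_m,\mu\rangle \le \liminf_n\langle\tilde\we_m,\mu_n\rangle<\infty$. For \eqref{e25}: from the eigenvector relation, $\lambda^{-1}\langle\PPPP_t^V\we_m,\mu\rangle = \langle\we_m,\mu\rangle<\infty$ by \eqref{e24}; moreover $\PPPP_t^V\we_m$ is, by \eqref{e30} and boundedness of $V$, bounded on $X_R$ by a constant times $\sup_{X_R}\we_m\le C R^{c}$, while $\int_{X_R^c}\we_m\,d\mu \to 0$ as $R\to\infty$ because $\we_m$ is $\mu$-integrable — but I need the product to vanish, so a little more care is required: split $\int_{X_R^c}\we_m\,d\mu \le \langle \tilde\we_m,\mu\rangle / \inf_{X_R^c}(\tilde\we_m/\we_m)$ and use that on $X_R^c$ one has $\tilde\we_m/\we_m \ge 1 + \exp(\kp\ees(\uu))/\we_m(\uu)$; since $\ees$ grows at least like $|\uu|_\h^2$ while $\we_m$ grows polynomially in $|\uu|_{\h^\sS}$ and $\ees$, the presence of the exponential term $\exp(\kp\ees)$ in $\tilde\we_m$ forces $\int_{X_R^c}\we_m\,d\mu$ to decay faster than any power of $R$, in particular fast enough to kill the polynomial growth of $\|\PPPP_t^V\we_m\|_{X_R}$. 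The main obstacle is precisely this last balancing act in \eqref{e25}: one must track the explicit growth rates of $\|\PPPP_t^V\we_m\|_{X_R}$ in $R$ (via \eqref{e30}) against the exponential-moment gain recorded in \eqref{e24}, and verify the product genuinely tends to $0$ rather than merely staying bounded. Everything else — the fixed-point construction and the tightness — is soft, relying only on the boundedness of $V$ and the Lyapunov estimate already available.
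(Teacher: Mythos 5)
Your construction of the eigenvector has a genuine gap: the Krylov--Bogolyubov averaging trick does not apply to the \emph{nonlinear} map $\Lambda_t^V\sigma=\PPPP_t^{V*}\sigma/\PPPP_t^{V*}\sigma(\h)$. The usual argument that $\Lambda_t^V\mu_n-\mu_n\to0$ relies on affinity of the map, and $\Lambda_t^V$ is not affine: writing $\nu_k=(\Lambda_t^V)^k\delta_{\uu_0}$ and $c_k=\PPPP_t^{V*}\nu_k(\h)$, one finds that $\Lambda_t^V\mu_n$ is the average of $\nu_2,\dots,\nu_{n+1}$ with the non-uniform weights $c_k/\sum_j c_j$, while $\mu_n$ is the uniform average of $\nu_1,\dots,\nu_n$; these need not be asymptotically close (e.g.\ a two-cycle with distinct normalising constants would make the Ces\`aro limit fail to be a fixed point). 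So a weak limit of your $\mu_n$ is not shown to be an eigenvector. The paper avoids this by a fixed-point theorem: using \eqref{e34} it shows that $G(\sigma)=\PPPP_t^{V*}\sigma/\PPPP_t^{V*}\sigma(\h)$ maps the convex compact set $D_{A,m}=\{\sigma:\lan\tilde\we_m,\sigma\ran\le A\}$ into itself (for $m$ large enough that $\Osc_\h(V)\le\al m/2$, which costs nothing since larger weights dominate smaller ones), and then applies the Leray--Schauder theorem; membership in $D_{A,m}$ gives \eqref{e24} for free. Your Lyapunov/tightness computation is in the right spirit, but it must be fed into such a fixed-point argument, not into an averaging one.

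Your treatment of \eqref{e25} is also flawed, and you correctly identified it as the delicate point. Since $X_R=B_{\h^\sS}(R)$, on $X_R^c$ only the $\h^\sS$-norm is large; the energy $\ees(\uu)$ is controlled by the $\h$-norm (see \eqref{aa01}) and can remain bounded on $X_R^c$ (high-frequency states with small $\h$-norm), so the ratio $\tilde\we_m/\we_m=1+\exp(\kp\ees)/\we_m$ can be as small as $1+O(R^{-2m})$ there. Hence the exponential term in $\tilde\we_m$ gives no super-polynomial decay of $\int_{X_R^c}\we_m\,\ddd\mu$, and your balancing argument does not close. The paper's remedy is purely polynomial: it constructs the eigenvector with the higher weight $\we_n$, $n=17m$, so that $\lan\we_n,\mu\ran<\infty$; then the Cauchy--Schwarz and Chebyshev inequalities give $\int_{X_R^c}\we_m\,\ddd\mu\le C_m\lan\we_n,\mu\ran R^{-n}$, while \eqref{e30} and \eqref{aa01} give $\|\PPPP_t^V\we_m\|_{X_R}\le C_m'e^{t\|V\|_\infty}(R^{16m}+1)$; since $n=17m>16m$, the product tends to zero.
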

\bp
{\it Step~1}. We first establish the existence of an eigenvector $\mu $ for $\PPPP_t^{V*}$ with a positive eigenvalue and satisfying \ef{e24}.
Let $t>0$ and $V$ be fixed. For any~$A>0$ and $m\geq 1$, let us introduce the convex   set
$$
D_{A, m}=\{\sigma\in \ppp(\h): \lan \tilde \we_m, \sigma\ran\leq A\},
$$
and    consider the continuous mapping from $D_{A, m}$ to $\ppp(\h)$ given by
$$
G(\sigma)=\PPPP_t^{V*}\sigma/\PPPP_t^{V*}\sigma(\h).
$$
Thanks to inequality \ef{e34}, we have
\begin{align}
\lan\tilde\we_{m}, G(\sigma)\ran&\leq \exp\left(t \Osc_\h(V)\right)\lan\tilde\we_{m}, \PPPP_t^*\sigma\ran\notag\\
&\leq 2\exp\left(t (\Osc_\h(V)-\al m)\right)\lan\tilde\we_{m}, \sigma\ran+C_m \exp\left(t  \Osc_\h(V)\right)\label{e28}.
\end{align}
Assume that $m$ is so large that 
$$
\Osc_\h(V)\leq \al m/2\q\text{ and }\q \exp(-\al m t/2)\leq 1/4,
$$
and let $A:=2 C_m e^{\al m t}$. 
Then, in view \ef{e28}, we have $\lan\tilde\we_{m}, G(\sigma)\ran\leq A$ for any~$\sigma\in D_{A, m}$, i.e.,       $G(D_{A, m})\subset D_{A, m}$. Moreover, it is easy to see that the set~$D_{A, m}$ is compact in $\ppp(\h)$ (we   use the Prokhorov compactness criterion  
to show that it is relatively compact and the Fatou lemma to prove that it is closed). Due to the Leray--Schauder theorem, 
the map~$G$ has a fixed point~$\mu \in D_{A, m}$. Note that, by the definitions of~$D_{A, m}$ and~$G$, the measure $\mu $ is an eigenvector of $\PPPP_t^{V*}$ with    positive  eigenvalue~$\lambda:=\PPPP_t^{V*}\mu (\h)$ and satisfies~\ef{e24}.  

\medskip
{\it Step~2}.
We now establish \ef{e25}. Let us fix an integer $m\geq 1$ and let $n=17m$. In view of the previous step, there is an eigenvector $\mu $ satisfying~$\lan\we_n,\mu \ran<\iin$.  From the Cauchy--Schwarz and Chebyshev inequalities it follows  that
\be\label{dm1}
\int_{X_R^c}\we_{m}(\uu)\mu (\ddd\uu)\leq \lan\we^2_{m}, \mu \ran^{1/2}(\mu (X_R^c))^{1/2}\leq C_m\lan\we_{n}, \mu_{t,V}\ran R^{-n}.
\ee
 On the other hand, using   \ef{e30} and \eqref{aa01}, we get    
$$
\|\PPPP_t^V\we_m\|_{X_R}\leq\exp(t\|V\|_\iin)\sup_{\uu\in X_R}\e_\uu\we_m(\uu_t)\leq C_m' \exp(t\|V\|_\iin)(R^{16m}+1).
$$Combining this with \eqref{dm1}, we obtain
  \ef{e25}.
\ep

\section{Uniform Feller property}
\label{S:3}
   \subsection{Construction of coupling processes} \label{S:3.1}

As in the case of discrete-time models considered in~\cite{JNPS-2012,JNPS-2014},  the proof of the uniform Feller property is based on the coupling method. This method has proved to be an important tool for the study of the ergodicity of randomly forced PDE's (see    Chapter~3  in~\cite{KS-book}   and the papers \cite{KS-jmpa2002,mattingly-2002, odasso-2008, DM2014}).  
 In this section, we recall a construction of coupled trajectories from~\cite{DM2014}, which was used   to establish the exponential mixing   for problem   \eqref{0.1}, \eqref{0.3}.   This construction will play a central role in the proof of the uniform Feller property in the next section.
 
\smallskip

For any~$\z,\z' \in \h$, let us  denote by $\uu_t$ and $\uu_t'$   the flows   of~\eqref{0.1},~\eqref{0.3} issued from $\z$ and $\z'$, respectively. 
For   any integer $N\ge 1$,    let $\vv=[v,\p_t v]$ be the flow of the problem  
\be\label{interm}
 \p_t^2v+\gamma \p_tv-\de v+f (v)+{\mathsf P}_N(f (u) -f (v))=h +\vartheta(t,x), \q v|_{\partial D}=0, \q  \vv (0)= \z'.  
\ee
The laws of the processes $\{\vv_t, t\in [0,1]\}$ and $\{\uu'_t, t\in [0,1]\}$ are denoted by~$\la(\z,\z')$ and $\la(\z')$, respectively.  We have the following estimate for the   total variation distance between $\la(\z,\z')$ and $\la(\z')$. \begin{proposition}\label{P:TVE}
There is an integer  $N_1\ge1$ such that, for any $N\ge N_1$,  $\es>0$, and  $\z, \z'\in \h$, we have   
\begin{equation}\label{eoejtnvf} 
| \la(\z,\z')-\la(\z')|_{var}\le    C_*\es^a+C_*\left[\exp\left(C_{N} \es^{a-2}|\z-\z'|_\h^2 e^{(|\EE(\z)|+|\EE(\z')|)} \right)-1\right]^{1/2},
\end{equation} where    $a<2$, $C_*$, and $C_N$  are   positive numbers not depending on $\es,  \z,$ and $\z'$.
\end{proposition}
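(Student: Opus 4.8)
The plan is to compare the auxiliary flow $\vv_t$ solving \eqref{interm} with the true flow $\uu'_t$ solving \eqref{0.1}, \eqref{0.3}, both driven by the \emph{same} realisation of the noise $\vartheta$, and to transfer the estimate to the laws via the Girsanov theorem. First I would observe that $\vv_t$ and $\uu'_t$ solve the same equation up to the extra drift term $g_t := {\mathsf P}_N(f(u_t)-f(v_t))$, which is a finite-dimensional (range in $H_N$) and, by the growth condition \eqref{1.8} together with standard Sobolev estimates in $\h$, can be bounded pointwise in time by a constant (depending on $N$) times $|\uu_t - \vv_t|_\h$ times a polynomial/exponential factor in the energies. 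The key deterministic input is a Foia\c s--Prodi type bound for the difference $\uu_t - \vv_t$: since the intermediate equation is designed so that the low modes are slaved, one gets $|\uu_t-\vv_t|_\h^2 \le |\z-\z'|_\h^2 \, e^{\,C(\cdots)}$ on $[0,1]$, with the constant controlled by the energies $\EE(\z),\EE(\z')$ along the trajectory (this is exactly where the factor $e^{(|\EE(\z)|+|\EE(\z')|)}$ in \eqref{eoejtnvf} originates); here I would invoke the a priori energy estimates for the NLW flow already available from \cite{DM2014}.

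Next I would set up the Girsanov change of measure. Write $\vartheta = \partial_t\xi$ with $\xi=\sum b_j\beta_j e_j$; since the extra drift $g_t$ lies in $H_N$ and $b_j>0$, for $N$ large (this fixes $N_1$, via the requirement that $\{e_1,\dots,e_N\}$ lie in the range of the diffusion — automatic here since all $b_j>0$) the process $g_t$ is adapted to the noise and $\sum_{j\le N} b_j^{-2}\langle g_t,e_j\rangle^2$ is finite. The law $\la(\z,\z')$ of $\vv$ is then absolutely continuous with respect to the law $\la(\z')$ of $\uu'$, and by Pinsker's inequality / the standard $L^1$–$L^2$ bound,
\[
|\la(\z,\z')-\la(\z')|_{var}^2 \;\le\; \tfrac12\,\E\!\left[\Big(\tfrac{d\la(\z,\z')}{d\la(\z')}-1\Big)^2\right]
\;=\;\tfrac12\Big(\E\,\mathcal Z_1^2 - 1\Big),
\]
where $\mathcal Z_1$ is the Radon--Nikodym density, an exponential martingale with quadratic variation $\int_0^1\sum_{j\le N} b_j^{-2}\langle g_s,e_s\rangle^2\,ds$. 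A cheap bound on $\E\,\mathcal Z_1^2$ in terms of $\sup_{[0,1]}|g_s|^2$ is not integrable because the energies are unbounded, so I would \emph{truncate}: introduce the stopping time at which $|\uu_s|_\h$ or $|\vv_s|_\h$ first exceeds a level, or more directly, for a parameter $\es>0$ split $\Omega = \{\,|g_s|\le \es^{(a-2)/2}\rho(s)\ \forall s\,\}\cup(\text{complement})$ — schematically, decompose according to whether the relevant exponential moment is large or small. On the good event one controls $\E[\mathcal Z_1^2;\text{good}]$ by $1+[\exp(C_N\es^{a-2}|\z-\z'|_\h^2 e^{|\EE(\z)|+|\EE(\z')|})-1]$ using the Foia\c s--Prodi bound above; on the bad event, where the noise has made the energy atypically large, one uses the exponential moment estimates for $\EE$ (Lyapunov/energy bounds for the NLW equation from \cite{DM2014}, cf. the weight $\wwww$) to show this probability, hence its contribution, is $\le C_*\es^a$ for $a<2$. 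Combining the two pieces and taking square roots yields \eqref{eoejtnvf}.

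The main obstacle, as the authors themselves flag in the introduction, is the Foia\c s--Prodi estimate for the stochastic NLW equation: unlike the parabolic case there is no smoothing, so controlling $|\uu_t-\vv_t|_\h$ requires working in the right energy-type norm \eqref{e40} and carefully tracking the exponential dependence on the energies $\EE(\z),\EE(\z')$ — this is what forces the somewhat awkward form of the right-hand side of \eqref{eoejtnvf} and the split at level $\es$. The Girsanov and Pinsker steps are then routine, but one must be careful that the stopping-time truncation is compatible with the change of measure (the density must remain a genuine martingale up to time $1$, which follows from Novikov's condition applied on the truncated event, $g$ being finite-dimensional and the energies finite a.s.).
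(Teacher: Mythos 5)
Your plan is correct and follows essentially the same route as the paper, which in fact omits the proof and refers to Section~4.2 of \cite{DM2014}: there the estimate is obtained exactly by a Girsanov transformation removing the finite-dimensional drift ${\mathsf P}_N(f(u)-f(v))$, with a stopping-time truncation of that drift (so the density is a true martingale), the Foia\c{s}--Prodi bound controlling the drift by $|\z-\z'|_\h^2$ times an exponential of the energies, and the exponential supermartingale/energy estimates bounding the probability of the truncation event by $C_*\es^a$. Your sketch reproduces this scheme, including the correct fix (truncating the drift rather than merely splitting $\Omega$) that you flag at the end.
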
 
This proposition is essentially established in Section 4.2 in \cite{DM2014} in a different form, and we shall omit the proof. 
  By Proposition~1.2.28 in~\cite{KS-book}, there is a probability space $(\hat \Omega, \hat \FF, \hat\pP)$ and   measurable functions $\VV , \VV' :\h\times \h\times \hat \Omega\to  C([0,1],\h) $ such that $(\VV (\z,\z'), \VV'(\z,\z'))$  is a maximal coupling for~$(\la(\z,\z'), \la(\z'))$ for any $\z,\z'\in \h$. We denote by~$\tilde \vv=[\tilde v_t, \p_t \tilde v]$ and $\tilde \uu'_t=[\tilde u'_t, \p_t \tilde u']$ the restrictions of $\VV$ and  $ \VV' $ to time $t\in [0,1]$. Then $\tilde v_t$ is a solution of the problem
  $$
 \p_t^2 \tilde v+\gamma \p_t \tilde v-\de \tilde v+f (\tilde v)-{\mathsf P}_N f (\tilde v)=h +\psi(t), \q \tilde v|_{\partial D}=0, \q  \tilde \vv (0)= \z',
$$
 where the process $\{\int_0^t\psi(\tau)\dd \tau, t\in [0,1]\}$ has the same law as 
 $$\left\{\xi(t)- \int_0^t {\mathsf P}_N f (u_\tau)\dd \tau, t\in [0,1]\right\}.$$
Let $\tilde \uu_t= [\tilde u, \p_t \tilde u]$ be a solution of 
$$
 \p_t^2 \tilde u+\gamma \p_t \tilde u-\de \tilde u+f (\tilde u)-{\mathsf P}_N f (\tilde u)=h +\psi(t), \q \tilde u|_{\partial D}=0, \q  \tilde \uu (0)= \z.  
$$
 Then $\{\tilde \uu_t, t\in[0,1]\}$ has the same law as $\{  \uu_t, t\in[0,1]\}$ (see Section~6.1 in~\cite{DM2014} for the proof).   Now the coupling   operators  $\RRR$ and $\RRR'$ are defined by 
$$
\RRR_t(\z,\z',\omega)=\tilde \uu_t,\quad \RRR'_t(\z,\z',\omega)=\tilde \uu_t', \q \z, \z'\in \h, \omega\in\hat\Omega.
$$ 
  By Proposition \ref{P:TVE}, if $N\ge N_1$, then   for any $\es>0$, we have 
\begin{align}\label{sdsflklk}
\hat \pP\{\exists t\in [0&,1]\text{ s.t. } \tilde \vv_t\neq \tilde \uu_t' \} \nonumber\\&\le C_* \es^a+C_*\left[\exp\left(C_{N} \es^{a-2}|\z-\z'|_\h^2 e^{(|\EE(\z)|+|\EE(\z')|)} \right)-1\right]^{1/2}.
\end{align}
 Let $(\Omega^k,\FF^k,\pP^k)$, $k\ge0$ be a sequence of independent copies of the
probability space $(\hat \Omega, \hat \FF, \hat\pP)$. We denote by~$(\Omega,\FF,\pP)$ the   direct product of the  spaces $(\Omega^k,\FF^k,\pP^k)$, and  for any~$\z,\z'\in \h$, $\omega=(\omega^1,\omega^2,\ldots)\in\Omega$, and $k\ge0$, we set~$\tilde u_0=u$, $\tilde u_0'=u'$,  and 
\begin{align*}
\tilde \uu_t(\omega)&=\RRR_\tau(\tilde \uu_{k}(\omega),\tilde \uu'_{k}(\omega),\omega^k), &\quad
\tilde \uu'_t(\omega)&=\RRR_\tau'(\tilde \uu_{k}(\omega),\tilde \uu'_{k}(\omega),\omega^k), \\\tilde \vv_t(\om)&=\VV_\tau(\tilde \uu_{k}(\omega),\tilde \uu'_{k}(\omega),\omega^k),
\end{align*} 
where $t=\tau+k, \tau\in [0,1)$.         We shall say that $(\tilde \uu_t,\tilde \uu_t')$ is a {\it coupled trajectory at level~$N$\/}  issued from~$(\z,\z')$.
   \subsection{The result and its proof} \label{S:3.2}

The following theorem   establishes the  uniform Feller property for the  semigroup~$\PPPP_t^V$   for any function  $V\in \UU_\delta$  with    sufficiently small $\delta>0$.   The   property is proved with respect to the   space    $\CC=\UU$ which is a determining family  for~$\ppp(\h)$ and
 contains   the constant functions.
 \begin{theorem} \label{T:3.1} There are positive numbers $\delta$  and  $R_0$ such that,  for any function~$V\in \UU_\delta$, the family $\{\|\PPPP_t^V{\mathbf1}\|_R^{-1}\PPPP_t^V\psi,t\ge1\}$ is uniformly equicontinuous on~$X_R$ for any~$\psi\in \UU$  and~$R\ge R_0$. 
\end{theorem}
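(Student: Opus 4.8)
The plan is to establish uniform equicontinuity of the normalized family $\{\|\PPPP_t^V{\mathbf1}\|_R^{-1}\PPPP_t^V\psi,\ t\ge1\}$ on $X_R$ by exploiting the coupling construction of Section~\ref{S:3.1} together with the Foia\c{s}--Prodi type estimate in Proposition~\ref{P:TVE}. Fix $R\ge R_0$, a test function $\psi=F\circ P_N\in \UU$ (with $F\in C_b^q(\h_N)$), and two initial points $\z,\z'\in X_R$ that are close in $\h$. The quantity to control is
\be
\bigl|\PPPP_t^V\psi(\z)-\PPPP_t^V\psi(\z')\bigr|
=\Bigl|\E_\z\bigl\{\psi(\uu_t)\Xi_V(t)\bigr\}-\E_{\z'}\bigl\{\psi(\uu'_t)\Xi_V(t)\bigr\}\Bigr|,
\ee
where $\Xi_V(t)=\exp(\int_0^tV(\uu_s)\dd s)$ as in~\eqref{S6ogt}. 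Using the coupled trajectory $(\tilde\uu_t,\tilde\uu'_t)$ at level $N$ issued from $(\z,\z')$, realized on the common probability space $(\Omega,\FF,\pP)$, I would rewrite this difference as an expectation over $\Omega$ of $\psi(\tilde\uu_t)\Xi_V^{}(\tilde\uu)(t)-\psi(\tilde\uu'_t)\Xi_V^{}(\tilde\uu')(t)$, and split it according to whether the auxiliary process $\tilde\vv$ has separated from $\tilde\uu'$ by time $t$ or not.

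\textbf{Key steps.} First, on the event that $\tilde\vv_s=\tilde\uu'_s$ for all $s\le t$ (no separation), the two trajectories $\tilde\uu$ and $\tilde\uu'$ are driven by the \emph{same} realization of the auxiliary noise $\psi(s)$, hence the Foia\c{s}--Prodi phenomenon forces the high-mode difference $P_N(\tilde\uu_t-\tilde\uu'_t)$ to decay exponentially, so $|\psi(\tilde\uu_t)-\psi(\tilde\uu'_t)|\le \|F\|_{C_b^q}|P_N(\tilde\uu_t-\tilde\uu'_t)|_\h^q$ is small for $t$ large; simultaneously the multiplicative weights $\Xi_V$ along the two trajectories differ only through $\Osc(V)$ accumulated over the time interval, which is where the smallness of $\delta$ enters to keep the ratio $\Xi_V(\tilde\uu)(t)/\Xi_V(\tilde\uu')(t)$ under control. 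Second, on the complementary event $\{\exists s\le t:\tilde\vv_s\ne\tilde\uu'_s\}$, I bound the contribution crudely by $\|\psi\|_\infty$ times the weighted probability of separation, which by~\eqref{sdsflklk} is estimated by $C_*\es^a+C_*[\exp(C_N\es^{a-2}|\z-\z'|_\h^2 e^{|\EE(\z)|+|\EE(\z')|})-1]^{1/2}$ on each unit step; summing the geometric-type series over the $\lfloor t\rfloor$ steps and using the exponential moment bounds on $\EE$ from~\eqref{e30} (equivalently the Lyapunov estimate underlying Lemma~\ref{L:5.1}), this is made $O(\es^a)$ uniformly in $\uu\in X_R$, then sent to $0$ by first choosing $\z,\z'$ close (so $|\z-\z'|_\h$ beats $\es^{a-2}$) and then $\es$ small. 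Third — and this is the reason the normalization $\|\PPPP_t^V{\mathbf1}\|_R^{-1}$ appears — all the above estimates naturally produce bounds of the form (small)$\times\|\PPPP_t^V{\mathbf1}\|_{R'}\times\wwww(\z)$ for a possibly larger radius $R'$, so I would invoke Proposition~\ref{P:2.4}, which gives $\|\PPPP_t^V\wwww\|_{L_\wwww^\infty}\le C\|\PPPP_t^V{\mathbf1}\|_{R_0}$, to absorb the weight and the loss of radius into $\|\PPPP_t^V{\mathbf1}\|_R$, so that after dividing by it the right-hand side becomes a genuinely small constant depending only on $|\z-\z'|_\h$ and $\es$, uniformly over $t\ge1$ and $\uu\in X_R$.

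\textbf{Main obstacle.} The delicate point is the interplay, in the "no separation" event, between the Foia\c{s}--Prodi decay of $P_N(\tilde\uu_t-\tilde\uu'_t)$ and the multiplicative factor $\Xi_V$. Unlike the additive (ergodicity) setting of~\cite{DM2014}, here one cannot simply discard $\Xi_V$: one must track how its fluctuations along the coupled pair compare, and the Foia\c{s}--Prodi estimate for the NLW equation is itself more intricate than in the parabolic case (the decay is not instantaneous and requires iterating over unit time steps, with constants depending on energy levels that must be dominated by stopping-time arguments). Making the geometric series of separation probabilities converge while keeping $N$ fixed but large, and simultaneously choosing $\delta$ small enough that $t\,\Osc(V)$ never spoils the ratio of Feynman--Kac weights, is the technical heart; I expect this to require a careful two-parameter choice (first $N$ large depending on $R$, then $\delta$ small depending on $N$ and the Foia\c{s}--Prodi decay rate, then $\es$ and the separation of $\z,\z'$) and the systematic use of Proposition~\ref{P:2.4} to renormalize at every stage.
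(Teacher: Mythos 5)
Your overall strategy is the right one and matches the paper's in outline: couple the trajectories at level $N$, use the Foia\c{s}--Prodi decay together with the fact that $\psi$ and $V$ factor through $P_N$ on the event where the coupling never breaks, bound the complementary contribution through the coupling estimate \eqref{sdsflklk}, and renormalise everything by Proposition~\ref{P:2.4}. However, your treatment of the decoupling event has a genuine gap. You split only into ``no separation up to time $t$'' versus ``separation somewhere'', and on the latter you propose to sum the unit-step bound \eqref{sdsflklk} with a fixed $\es$ over the $\lfloor t\rfloor$ steps, sending $\es\to0$ at the end. This cannot work as stated: the Feynman--Kac weight accumulated up to the first decoupling time $r$ contributes a factor $e^{r\|V\|_\infty}$ (after conditioning at time $r$ and using the Markov property plus Proposition~\ref{P:2.4}), so one must sum $\sum_r e^{r\|V\|_\infty}\bigl(C_*\es^a+\cdots\bigr)$, and with $\es$ fixed the term $e^{r\|V\|_\infty}\es^a$ already diverges no matter how small $\es$ is. The paper's proof avoids this by stratifying over the first decoupling time $r$ (events $\bar G_{r-1}\cap G_r^c$) and applying \eqref{sdsflklk} at time $r$ with the $r$-dependent choice $\es=d\,e^{-\al r/2}$, matched to the Foia\c{s}--Prodi contraction \eqref{4.16}, so that the failure probability decays like $d^a e^{-a\al r/2}$ and beats $e^{r\|V\|_\infty}$ precisely because $\Osc(V)<\delta$ with $\delta$ small compared to $a\al$; this balance is the actual reason for the smallness of $\delta$, not the control of the ratio of the weights on the never-decoupled event (there the closeness of the two Feynman--Kac exponents follows from $V\in C^q_b$ composed with the decay \eqref{FPE1}, with no smallness of $\delta$ required).

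A second, related gap is your handling of the random energies. The conditional coupling bound at time $r$ involves $\exp\bigl(C_N\es^{a-2}|\uu_r-\uu_r'|_\h^2\,e^{|\EE(\uu_r)|+|\EE(\uu_r')|}\bigr)-1$, and likewise the full Foia\c{s}--Prodi estimate \eqref{4.16} is conditional on the energy-integral bound \eqref{4.17}; the energies are unbounded random variables appearing \emph{inside} an exponential, so they cannot be disposed of by the polynomial or exponential moment bounds \eqref{e30}, as you suggest. The paper introduces a second stratification parameter $\rho$ (the events $F_{r,\rho}$), on which the Foia\c{s}--Prodi constant and the energies at time $r$ are deterministically controlled, bounds $\pP\{A_{r,\rho}\}$ alternatively by $Ce^{-\beta\rho}$ via \eqref{cc2.14}--\eqref{bb2.14}, takes the minimum of the two bounds as in \eqref{E:7.7}, and finally checks convergence of the double series separately on $\{\rho\le a\al r/8\}$ and its complement, which forces the further restriction $\delta<a\al\beta/32$. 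Without this two-parameter stratification and the minimum of the two bounds, the summation step of your argument does not close.
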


\begin{proof}    To prove this result, we   develop the  arguments of the proof of
Theorem~6.2 in \cite{JNPS-2014}.  For any  $\delta>0$, $V\in \UU_\delta $, and $\psi\in \UU$,    we have 
$$
\PPPP_t^V\psi(\uu)=\E_{\uu}\bigl\{(\Xi_V\psi)(\uu_t, t)\bigr\},
$$
where  
\begin{equation}\label{E:7.2}
(\Xi_V\psi)(\uu_t, t):=\exp\biggl(\int_{0}^tV(\uu_\tau) \dd \tau\biggr)\psi(\uu_{t}).
\end{equation}
We   prove the uniform equicontinuity of the family $\{g_t,t\ge 1\}$ on~$X_R$, where 
$$
g_t(\uu)= \|\PPPP_t^V{\mathbf1}\|_R^{-1}\PPPP_t^V\psi(\uu). 
$$  
Without    loss of generality, we can assume  that  $0\le \psi\le 1$   and $\inf_H V=0$, so that     $\Osc_H(V)=\|V\|_\infty$. We can assume also that  the integer  $N$   entering representation \eqref{repres1} is the same for $\psi$ and $V$ and it is denoted by   $N_0$.

\medskip
  {\it Step~1:~Stratification}. Let us take any   $N\ge N_0$ and $\z,\z' \in X_R$ such that $d:=|\z-\z'|_\h\le1$, and   denote by $(\Omega,\FF,\pP)$  the probability space constructed in the previous subsection. Let us  consider a coupled trajectory~$(\uu_t,\uu_t'):=(\tilde \uu_t,\tilde \uu_t')$ at level~$N$ issued from $(\z,\z')$  and   the associated process~$\vv_t:=\tilde \vv_t$.   For any integers~$r\ge0$ and~$\rho\ge1$, we set\footnote{The event $\bar G_r$ is well defined also for $r=+\ty$.} 
\begin{align*}
\bar G_r &=\bigcap_{j=0}^rG_j, \,\,\,  G_j=\{  \vv_t =\uu_t', \forall t\in (j,j+1]\},  \,\,\,  F_{r,0}=\varnothing,\\
F_{r,\rho}&=\biggl\{ \sup_{\tau \in [0,r]} \left(\int_0^\tau \left(\|\nabla u_\tau \|^2+\|\nabla u'_\tau\|^2\right)\dd \tau-L\tau \right) \le |\EE(\z)|+ |\EE(\z')|  + \rho;  \\&  \q\q\q\q\q\q\q|\EE(\uu_r)|+ |\EE(\uu_r')| \le \rho \biggr\}, \end{align*}
where     $L$ is the constant in \eqref{bb2.14}. We also define the pairwise disjoint  events        
 \begin{align*}
A_0=G_0^c,  \quad A_{r,\rho}=\bigl(\bar G_{r-1}\cap G_r^c\cap F_{r,\rho}\bigr)\setminus F_{r,\rho-1},\,\, r\ge1,\rho\ge1, \q \tilde A = \bar G_{+\ty}.
\end{align*}Then, for any $t\ge1$,   we have 
 \begin{align} 
\PPPP_t^V\psi(\z)-\PPPP_t^V\psi(\z')
&= \E \bigl\{\I_{A_{0}}\bigl[(\Xi_V\psi )(\uu_t,t ) -(\Xi_V\psi )(\uu_t',t)\bigr] \bigr\}\nonumber\\&\quad+ \sum_{r,\rho=1}^{\ty} \E \bigl\{\I_{A_{r,\rho}}\bigl[(\Xi_V\psi )(\uu_t,t ) -(\Xi_V\psi )(\uu_t',t)\bigr] \bigr\}\nonumber\\&\quad+  \E \bigl\{\I_{\tilde A}\bigl[(\Xi_V\psi )(\uu_t,t ) -(\Xi_V\psi )(\uu_t',t)\bigr] \bigr\}\nonumber\\&=I_0^t(\z,\z')+ \sum_{r,\rho=1}^{\ty} I_{r,\rho}^{t}(\z,\z')+ \tilde I^{t}(\z,\z'),
\label{E:7.3}
\end{align}
where  
\begin{align*}
I_{0}^{t}(\z,\z')&:=\E \bigl\{\I_{A_{0}}\bigl[(\Xi_V\psi )(\uu_t,t ) -(\Xi_V\psi )(\uu_t',t)\bigr] \bigr\},\\
I_{r,\rho}^{t}(\z,\z')&:=\E \bigl\{\I_{A_{r,\rho}}\bigl[(\Xi_V\psi )(\uu_t,t ) -(\Xi_V\psi )(\uu_t',t)\bigr] \bigr\},\\
\tilde I^{t}(\z,\z')&:=\E \bigl\{\I_{\tilde A}\bigl[(\Xi_V\psi )(\uu_t,t ) -(\Xi_V\psi )(\uu_t',t)\bigr] \bigr\}.  
\end{align*}
To prove the uniform equicontinuity of~$\{g_t, t\ge1\}$, we first estimate these three quantities.

\medskip
{\it Step~2:~Estimates for  $I_{0}^t$ and $I_{r,\rho}^t$}.  Let $\delta_1>0$ and $R_0\ge1$ be the numbers in Proposition~\ref{P:2.4}. Then, if $\Osc(V)< \delta_1$ and $R\ge R_0$, we have the following estimates
\begin{align}
|I_{0}^t(\z,\z')|&\le C_1(R,V)\|\PPPP_t^V{\mathbf1}\|_R\,\pP\{A_{0}\}^{1/2}, \label{6.5aaa}\\
|I_{r,\rho}^t(\z,\z')|&\le C_2(R,V)e^{r\|V\|_\infty}\|\PPPP_t^V{\mathbf1}\|_R\,\pP\{A_{r,\rho}\}^{1/2}   \label{6.5}
\end{align} for any integers $r,\rho\ge1$. Let us prove \eqref{6.5}, the other estimate is similar. First assume that $r\le t$.
 Using  the inequalities $0\le \psi \le 1$, the positivity of~$\Xi_V\psi $, and the Markov property, we derive
\begin{align*} 
I_{r,\rho}^t(\z,\z')
&\le\E\bigl\{I_{A_{r,\rho}}(\Xi_V\psi )(\uu_t,t)\bigr\}
\le \E\bigl\{I_{A_{r,\rho}}(\Xi_{V}{\mathbf1})(\uu_t,t)\bigr\}\\
&= \E\bigl\{I_{A_{r,\rho}}\E\bigl[(\Xi_{V}{\mathbf1})(\uu_t,t)\,\big|\,\FF_r\bigr]\bigr\}
\le e^{r\|V\|_\infty}\E\bigl\{I_{A_{r,\rho}}(\PPPP_{t-r}^V{\mathbf1})(\uu_r)\bigr\},
\end{align*}
where $\{\FF_t\}$ stands for the filtration generated by~$(\uu_t,\uu_t')$. Then from \eqref{a5.8} it follows that
$$
\PPPP_{t-r}^{V}{\mathbf1}(\z)\le M\|\PPPP_{t-r}^{V}{\mathbf1}\|_{R_0}\wwww (\z),
$$
 so we have
\begin{align*}
I_{r,\rho}^t(\z,\z')
&\le C_3 e^{r\|V\|_\infty}\|\PPPP_{t-r}^V{\mathbf1}\|_{R_0}\E\bigl\{I_{A_{r,\rho}}\wwww (\uu_r)\bigr\}\\
&\le C_3 e^{r\|V\|_\infty}\|\PPPP_{t-r}^V{\mathbf1}\|_{R_0}\bigl\{\pP(A_{r,\rho})\,\E\,\wwww ^2(\uu_r)\bigr\}^{1/2}. 
\end{align*}
 Using this,~\eqref{e30}, and the symmetry, we obtain  \eqref{6.5}. If $r> t$, then
 \begin{align*} 
I_{r,\rho}^t(\z,\z')
& 
\le e^{r\|V\|_\infty}\pP\{A_{r,\rho}\bigr\}\le e^{r\|V\|_\infty} \|\PPPP_t^V{\mathbf1}\|_R\,\pP\{A_{r,\rho}\}^{1/2},
\end{align*}which implies \eqref{6.5} by symmetry.

\medskip
{\it Step~3:~Estimates for $\pP\{A_{0}\}$ and $\pP\{A_{r,\rho}\}$}.  Let us show that,  for sufficiently large  $N\ge1$, we have   \begin{align}
\pP\{A_{0}\} &\le C_4(R,N)d^{a/2},\label{E:7.7ffd}\\
\pP\{A_{r,\rho}\} &\le C_5(R) \left\{ \!  \Bigg(\!    d^ae^{-a\al r/2} \! + \!\left[\exp \left(C_6(R,N)d^a e^{2 \rho-a\al r/2}    \!  \right) -1\right]^{1/2}  \! \Bigg) \!  \wedge \!    e^{-\beta\rho} \!\right\},\label{E:7.7}
\end{align}
where   $a, C_*$, and $\beta$ are the constants in~\eqref{eoejtnvf} and \eqref{bb2.14}. Indeed, taking $\es=d$ in~\eqref{sdsflklk},     using \eqref{aa01}, and recalling that $d\le1$, we get
$$
\IP\bigl\{ A_0 \bigr\}
 \le C_*d^a+C_*\left[\exp\left(C_{N} d^{a} e^{ {C_7R^4}} \right)-1\right]^{1/2}\le C_4(R,N)d^{a/2}, 
$$provided that $N$ is larger that the number $ N_1$ in Proposition~\ref{P:TVE}. This  gives~\eqref{E:7.7ffd}.
 To show      \eqref{E:7.7}, 
  we use the   estimates 
  \begin{gather}
\E_\uu  \exp\left( \beta |\EE(\uu_t )|\right) \le C    \exp(\beta |\EE(\uu)|), \q \uu\in \h, \label{cc2.14}\\
\pp_\uu \left\{\sup_{t\geq 0}\left( \int_0^t \| \nabla u_\tau\|^2 \dd \tau-Lt\right)\geq |\EE(\uu)| +\rho\right\}\leq  Ce^{-\beta \rho},\q \rho>0,  \label{bb2.14}
\end{gather}  
where $L, \beta$, and $C$ are some positive constants depending on~$\gamma, \|h\|$, and $\BBB$;  they follow immediately from 
  Propositions~3.1 and~3.2 in~\cite{DM2014}.   
From the inclusion  $A_{r,\rho}\subset F_{r,\rho-1}^c$ and inequalities~\eqref{cc2.14}, \eqref{bb2.14}, and \eqref{aa01} it follows  that 
\begin{equation} \label{6.7}
\pP\{A_{r,\rho}\}\le  C_8(R) e^{-\beta\rho}.
\end{equation}
  By the Foia\c{s}--Prodi type  estimate (see \eqref{4.16} in Proposition \ref{4.13}),    there is $N_2\ge1$ such that   for any $N\ge N_2$    on the event~$\bar G_{r-1}\cap F_{r,\rho}$ we have
\be
|\uu_{r}-\uu_{r}'|_\h^2
\le \exp(-\alpha r + \rho+|\EE(\z)|+|\EE(\z')| )d^2\le C_9(R)  e^{-\alpha r +\rho}d^2, \label{FP:est}
\ee where we used \eqref{aa01}. Recall that on the same event we have also
 \be \label{FP:est1}
|\EE(\uu_r)| +|\EE(\uu_r') |
\le   \rho . 
\ee  So using the Markov property, \eqref{sdsflklk} with $\es=de^{-\al r/2}$, \eqref{FP:est1} and \eqref{FP:est},
    we obtain
\begin{align*} 
\IP\{A_{r,\rho}\}&\le\IP\bigl\{\bar G_{r-1}) \cap G_r^c \cap F_{r,\rho}\bigr\}
=\E\bigl\{\I_{\bar G_{r-1}\cap F_{r,\rho}}\E\bigl(I_{G_r^c}\,\big|\,\FF_{r}\bigr)\bigr\}
\notag\\
&\le  C_*d^ae^{-a\al r/2} +C_*\E\Big\{\I_{\bar G_{r-1}\cap F_{r,\rho}}  \\&\quad  \times \left[\exp\left(C_{N}    d^{a-2} e^{-(a-2)\al r/2}|\uu_r-\uu_r'|_\h^2 e^{(|\EE(\uu_r)|+|\EE(\uu_r')|)} \right)-1\right]^{1/2}\Big\}\notag\\
&\le  C_*d^ae^{-a\al r/2}+C_*\left[\exp\left(C_6(R,N)d^a e^{2 \rho-a\al r/2}   \right)-1\right]^{1/2}. 
\end{align*}Combining this with~\eqref{6.7} and choosing $N\ge N_1\vee N_2 $,   we get the required inequality~\eqref{E:7.7}.

\medskip
{\it Step~4:~Estimate for   $\tilde I^t$}. Let us show that, for any   $N\ge N_0$, we have 
\begin{equation}\label{E:eerrtt}
|\tilde I_{\rho}^{t}(\z,\z')|\le C_{10}(\psi, V)   \|\PPPP_t^V{\mathbf1}\|_R  d^q.
\end{equation}  Indeed, 
 we write 
 \begin{align} \label{eeedfgfkjk}
\tilde I^t(\z,\z')
&=\E\bigl\{\I_{\tilde A}(\Xi_V{\mathbf1})(\uu_t,t)[ \psi (\uu_t)-\psi (\uu_t')] \bigr\}\nonumber\\&\quad+\E\bigl\{\I_{\tilde A}[(\Xi_V{\mathbf1})(\uu_t,t)-(\Xi_V{\mathbf1})(\uu_t',t)]\psi (\uu_t') \bigr\}.
\end{align} Let us denote by $J_{1,\rho}^t$ and $J_{2,\rho}^t$ the expectations in the right-hand side of this equality. Then  by   estimate \eqref{FPE1}, 
on the event $\tilde A$ we have
\begin{equation}\label{FPestima}
|P_N(\uu_{\tau}-\uu_{\tau}')|_\h^2
\le e^{-\alpha \tau }d^2, \quad \tau\in [0,t].
\end{equation} Since $\psi\in C^q_b(\h) $, we derive from~\eqref{FPestima}
\begin{align*}
|J_{1,\rho}^t|&\le   \E\bigl\{\I_{\tilde A} (\Xi_V{\mathbf1})(\uu_t,t) | \psi (\uu_t)-\psi (\uu_t')| \bigr\} \le  \| \psi\|_{C^q_b}  e^{-\alpha t /2}d^q  \|\PPPP_t^V{\mathbf1}\|_R\\&\le \| \psi\|_{C^q_b}    \|\PPPP_t^V{\mathbf1}\|_Rd^q.
\end{align*}
Similarly, as $V\in C^q_b(\h)$,
\begin{align*}
|J_{2,\rho}^t|&\le  \E\bigl\{\I_{\tilde A}|(\Xi_V{\mathbf1})(\uu_t,t)-(\Xi_V{\mathbf1})(\uu_t',t)| \bigr\}\\&\le   \E\left\{\I_{\tilde A}(\Xi_V{\mathbf1})(\uu_t,t)\left[ \exp\left(\int_{0}^t|V(\uu_\tau)-V(\uu_\tau')| \dd \tau\right)-1 \right] \right\}\\&\le     \left[ \exp\left( \|V\|_{C^q_b}   d^q  (1-e^{-\al q t/2})\right)-1 \right] \|\PPPP_t^V{\mathbf1}\|_R\\&\le     \left[ \exp\left( \|V\|_{C^q_b}    d^q  \right)-1 \right] \|\PPPP_t^V{\mathbf1}\|_R.
\end{align*}
Combining these   estimates for $J_{1,\rho}^t$ and $J_{2,\rho}^t$ with \eqref{eeedfgfkjk}, we get \eqref{E:eerrtt}.

\medskip
{\it Step~5.} From~\eqref{E:7.3}--\eqref{E:7.7} and \eqref{E:eerrtt} it follows  that, for any      $\z,\z'\in X_R$, $t\ge1$,   and $R\ge R_0$,   we have
\begin{align*}
&\bigl|g_t(\z)-g_t(\z')\bigr|
\le C_{11}(R,V,N, \psi)   \Bigg(d^{a/4} + d^q \\&
 + \sum_{r,\rho=1}^\infty \!\! e^{r\|V\|_\infty} \!  \left\{   \!   \left(    d^{a/2}e^{-a\al r/4} + \left[\exp \left(C_6d^a e^{2  \rho-a\al r/2}      \right) -1\right]^{1/4}   \right)   \wedge    e^{-\beta\rho/2}  \right\} \!    
\Bigg),
\end{align*}   provided that   $N \ge N_0\vee  N_1\vee N_2 $.
 When $d=0$,  the   series in the right-hand side  vanishes. So  to prove the uniform equicontinuity of~$\{g_t\}$, it suffices to show that    the series converges uniformly in~$d\in[0,1]$. Since its terms are   positive and monotone, it suffices to show   the converge for~$d=1$:
\begin{align} 
 \sum_{r,\rho=1}^\infty   e^{r\|V\|_\infty}   \left\{     \left(     e^{-a\al r/4} + \left[\exp \left(C_6 e^{2  \rho-a\al r/2}   \right) -1\right]^{1/4}   \right)   \wedge    e^{-\beta\rho/2} \right\} < \infty. \label{E:7.5}
\end{align}
 To prove this,   we  will assume that~$\Osc(V)$ is sufficiently small. Let us consider the sets
$$
S_1=\{(r,\rho)\in\N^2:  \rho\le a\al r/8  \}, \quad S_2=\N^2\setminus S_1.
$$
Then taking    $\delta <\delta_1 \vee(a\alpha/32)$ and $\Osc(V)< \delta$, we see that 
\begin{align*}
\sum_{(r,\rho)\in S_1}&e^{r\|V\|_\infty}         \left(     e^{-a\al r/4} + \left[\exp \left(C_6 e^{2 \rho-a\al r/2}   \right) -1\right]^{1/4}   \right)\\&\le  C_{12}(R,N) \sum_{(r,\rho)\in S_1} e^{r\|V\|_\infty}  e^{-a\al r/16}\le C_{13}(R,N) \sum_{r=1}^\ty e^{-a\al r/32}<\infty.
\end{align*}
Choosing $\delta<a\al \beta/32 $, we get 
$$
\sum_{(r,\rho)\in S_2} e^{r\|V\|_\ty}e^{-\beta\rho/2}\le C_{14} \sum_{\rho=1}^\infty   e^{-\beta\rho/4}<\infty.
$$
These two  inequalities show that~\eqref{E:7.5} holds.

 \end{proof}

\section{Estimates for regular solutions}\label{S:4}
In this section, we establish the exponential tightness property     and obtain some higher order     moment estimates for solutions  in $\h^\sS$.

\subsection{Exponential tightness}\label{S:4.1}
Here we show that the exponential tightness property in Section~\ref{S:1.3} is verified for the function~$\varPhi(\uu)= |\uu|_{\h^\sS}^\vk$,   if we choose~$\varkappa>0$ sufficiently small. Clearly, the level sets of $\varPhi$ are compact in $\h$.  
 \begin{theorem}\label{0.4} For any $\sS<1/2$, there is   $\kp\in (0,1)$ such that, for any $R\ge1$, we have
\be\label{0.5}
\e_\vv\exp \left(\int_0^t |\uu_\tau |^\kp_{\h^\sS}\dd \tau\right)\leq c\,e^{ct}\q\text{ for any }  \vv\in X_R, t\geq 0,
\ee 
where $c$ is a positive constant  depending   on $R$. 
\end{theorem}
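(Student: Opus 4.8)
The claim is an exponential moment bound for the $\h^\sS$-norm of the solution, uniform for initial data in the ball $X_R=B_{\h^\sS}(R)$, with a small exponent $\vk\in(0,1)$. The strategy is to produce a supermartingale-type (Lyapunov) estimate for a suitable functional of $|\uu_t|_{\h^\sS}^2$ and then use the standard exponential-supermartingale argument. First I would derive the basic Itô-type differential inequality for $\|\uu_t\|_{\h^\sS}^2$. Applying $(-\Delta)^{\sS/2}$ to equation~\eqref{0.1}, taking the scalar product with $(-\Delta)^{\sS/2}(\dot u+\al u)$ in the spirit of the energy norm~\eqref{e40}, and using the growth condition~\eqref{1.8} together with the restriction $\sS<1-\rho/2$ to control the nonlinear term $f(u)$ in $H^\sS$ (here one uses Sobolev embeddings in $D\subset\R^3$ and interpolation: $f(u)$ is bounded in $H^\sS$ by a polynomial in $\|u\|_1$ times $\|u\|_{1+\sS}$, with a gap that can be absorbed), one gets schematically
\be\label{aux-ito}
\ddd\,|\uu_t|_{\h^\sS}^2 + \al\,|\uu_t|_{\h^\sS}^2 \,\ddd t \le \bigl(C_1 P(\ees(\uu_t)) + \BBB_1\bigr)\ddd t + \ddd M_t,
\ee
where $M_t$ is a continuous martingale with quadratic variation controlled by $|\uu_t|_{\h^\sS}^2$ (this is where condition~\eqref{a0.4}, $\BBB_1=\sum\lm_jb_j^2<\infty$, enters), and $P$ is a polynomial coming from the nonlinearity. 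The key point is that the ``bad'' term on the right is a polynomial in the energy $\ees$, not in the stronger $\h^\sS$-norm.

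Next I would feed in the control of the energy. The energy functional $\ees$ satisfies its own exponential-type estimate — this is precisely the content of~\eqref{cc2.14}, $\E_\uu\exp(\beta|\EE(\uu_t)|)\le C\exp(\beta|\EE(\uu)|)$, together with the time-integral bound~\eqref{bb2.14} on $\int_0^t\|\nabla u_\tau\|^2\ddd\tau$, both borrowed from Propositions~3.1 and~3.2 in~\cite{DM2014}. Using the Gronwall form of~\eqref{aux-ito},
$$
|\uu_t|_{\h^\sS}^2 \le e^{-\al t}|\uu_0|_{\h^\sS}^2 + C\int_0^t e^{-\al(t-\tau)}\bigl(P(\ees(\uu_\tau))+\BBB_1\bigr)\ddd\tau + \int_0^t e^{-\al(t-\tau)}\ddd M_\tau,
$$
so that $|\uu_t|_{\h^\sS}^\vk$, for $\vk$ small, is dominated by $e^{-\al\vk t/2}|\uu_0|_{\h^\sS}^\vk$ plus a term that, after applying Jensen/Hölder to the time average against the probability density $\al e^{-\al(t-\tau)}\ddd\tau$ and using $\vk<1$, is bounded by a small multiple of $\sup_{\tau\le t}(\ees(\uu_\tau))^{\text{const}\cdot\vk}$ plus a stochastic-integral remainder. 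Then $\exp(\int_0^t|\uu_\tau|_{\h^\sS}^\vk\ddd\tau)$ is estimated by splitting off the martingale part via the exponential supermartingale inequality (the quadratic variation contributes a term of the same polynomial-in-$\ees$ type, again absorbed by taking $\vk$ small) and by choosing $\vk$ small enough that $t\cdot(\text{const}\cdot\sup_{\tau\le t}\ees(\uu_\tau)^{\text{const}\cdot\vk})$ has a finite exponential moment — which follows from~\eqref{cc2.14} after a further application of~\eqref{bb2.14} to absorb the $\int\|\nabla u\|^2$ contribution hidden in $\ees$ along the trajectory. The uniformity over $\vv\in X_R$ comes from the fact that $|\vv|_{\h^\sS}\le R$ and $|\EE(\vv)|\le C(1+R^4)$ by~\eqref{aa01}, so the constant $c$ depends only on $R$.

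**Main obstacle.** The delicate point is the nonlinear term: one must show that $\|f(u)\|_{H^\sS}$ is controlled by a polynomial in the energy $\ees(\uu)$ times a power of $\|u\|_{1+\sS}$ with an exponent strictly less than $2$ on the latter (so it can be absorbed into the dissipation after Young's inequality), using only $\sS<1-\rho/2$ and $\rho<2$ together with the $3$-dimensional Sobolev calculus — this is exactly why the hypothesis $\sS<1/2$ (hence comfortably below $1-\rho/2$) appears. The second subtlety is bookkeeping the powers of $\vk$: one needs $\vk$ simultaneously small enough that (i) the polynomial-in-$\ees$ drift term raised to the relevant power still has a finite exponential moment via~\eqref{cc2.14}, and (ii) the quadratic variation of the rescaled martingale, which also grows polynomially in $\ees$, can be absorbed in the exponential supermartingale step. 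Once the interplay of these exponents is arranged, the conclusion~\eqref{0.5} follows by a routine Gronwall/supermartingale argument; I would defer the precise constants and the Sobolev estimates to a lemma, exactly as the paper defers Lemma~\ref{L:5.1}.
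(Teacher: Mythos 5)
Your plan founders at the final moment-bookkeeping step, and the failure is structural, not cosmetic. After the Young absorption in your It\^o inequality, the drift is a polynomial in the energy of degree larger than one (already $\|f(u)\|_\sS\lesssim \|u\|_1^{\rho+1}+1\lesssim \ees(\uu)^{(\rho+1)/2}+C$, and squaring it upon absorption into the dissipation gives $\ees^{\rho+1}$, with $\rho$ allowed up to $2$). Your two suggested ways of closing the estimate then both fail: (i) the bound by $t\cdot\mathrm{const}\cdot\sup_{\tau\le t}\ees(\uu_\tau)^{c\vk}$ does have finite exponential moments (the small power makes the tails lighter than exponential), but $\E\exp\bigl(t\sup_{\tau\le t}\ees^{c\vk}\bigr)$ grows superexponentially in $t$, not like $ce^{ct}$, no matter how small $\vk$ is --- the small exponent $\vk$ acts on $|\uu|_{\h^\sS}$, not on the energy entering through the drift; (ii) if instead one keeps the time integral, one needs $\E\exp\bigl(\delta\int_0^t\ees(\uu_\tau)^\theta\dd\tau\bigr)\le Ce^{ct}$ with $\theta>1$, which is false for every $\delta>0$, since $\ees$ dominates $\|\dot u\|^2$, a Gaussian-type quadratic quantity, so $\ees^\theta$ has only stretched-exponential tails. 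The inputs \eqref{cc2.14} and \eqref{bb2.14} give fixed-time exponential moments of $\ees$ and exponential moments of quantities \emph{linear} in the time-integrated energy; hence the whole proof must be arranged so that $\int_0^t|\uu_\tau|^\vk_{\h^\sS}\dd\tau$ is bounded \emph{pathwise} by a constant plus a term linear in $\int_0^t\ees(\uu_\tau)\dd\tau$ (and in $\int_0^t|\vv_2(\tau)|^2_{\h^\sS}\dd\tau$ for a Gaussian part). Your outline never produces such a linear bound.

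That linearisation is exactly what the paper's argument is engineered for, and it has no counterpart in your proposal: one splits $\uu=\vv_1+\vv_2+\Zz$ with $\vv_1$ the freely decaying part and $\vv_2$ the stochastic convolution (whose $\h^\sS$-norm has square-exponential moments, \eqref{0.6b}); the remainder $\Zz$ solves a \emph{noiseless} equation, so no stochastic integral in $H^\sS$ ever appears. Differentiating that equation in time and estimating $\aA=[\dot z,\ddot z]$ in $\h^{\sS-1}$, one splits $\dot u=\dot v_1+\dot v_2+a$ inside $f'(u)\dot u$ and, crucially, uses the dissipativity condition \eqref{1.5} to bound $|f'(u)|_{L^{(\rho+2)/\rho}}$ by the power $\rho/(\rho+2)<1$ of the energy; this yields a differential inequality of the sublinear form $\dot x+\al x\le g\,x^{1-\beta}+b$ with $g$ \emph{linear} in $\ees+|\vv_2|^2_{\h^\sS}$, and the pathwise ODE Lemma~\ref{0.20} converts it into a bound for $\int_0^t x^{\beta}\dd\tau$ that is linear in $\int_0^t(\ees+|\vv_2|^2_{\h^\sS})\dd\tau$, after which the exponential moment follows from \eqref{0.6b} and the estimates of \cite{DM2014}. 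Without an analogue of this mechanism (the decomposition, the use of \eqref{1.5} to get a sub-unit power of the energy, and the pathwise lemma replacing the exponential-supermartingale step), the bound \eqref{0.5} cannot be closed, so the proposal has a genuine gap.
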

\bp 
It is sufficient to prove that   there is $\kp\in (0, 1)$ such that, for any $R\ge 1$, we have
\be\label{e42}
\e_\vv\exp \left(\De\int_0^t |\uu_\tau|^\kp_{\h^\sS}\dd \tau\right)\leq \tilde c\,e^{\tilde c t}\q\text{ for any }  \vv\in X_R, t\geq 0,
\ee 
where $\De$ and $\tilde c$ are positive constants depending   on $R$.
Indeed, once this is proved, we can use the inequality 
$$
|\uu|^{\f{\kp}{2}}_{\h^\sS}\leq \De|\uu|^{\kp}_{\h^\sS}+\De^{-1}
$$
to derive \ef{0.5}, where $\kp$ should be replaced by $\kp/2$.
We divide the proof of \ef{e42} into several steps.

\medskip

{\it Step~1: Reduction.} Let us split the flow $\uu(t)$ to the sum $\uu=\vv_1+\vv_2+\Zz$, where $\vv_1(t)=[v_1(t), \dt v_1(t)]$ corresponds to the flow of \eqref{0.1} with $f=h=\vartheta=0$ issued from $\vv$ and $\vv_2(t)=[v_2(t), \dt v_2(t)]$ is the flow of \eqref{0.1} with $f=0$  issued from the origin. Some standard arguments show that   the following a priori estimates hold:
\begin{gather}
|\vv_1(t)|^2_{\h^\sS}\leq |\vv|^2_{\h^\sS}e^{-\al t}, \label{0.6a}\\
\e\exp \left(\delta_1\int_0^t |\vv_2(\tau)|^2_{\h^\sS}\dd \tau\right)\leq c_1\,e^{c_1t}\q\text{ for any }   t\geq 0,\label{0.6b}
\end{gather}
where $\delta_1$ and $c_1$ are positive constants depending only on $\al, \BBB_1$, and $\|h\|_1$. Now using the Cauchy--Schwarz  inequality and \eqref{0.6a}, we get, for any $\delta<\delta_1/2$,
\begin{align*}
\e_\vv\exp\left(\De\int_0^t |\uu(\tau)|^\kp_{\h^\sS}\dd \tau\right)&\leq \exp\left(\!\De\int_0^t |\vv_1(\tau)|^\kp_{\h^\sS}\dd \tau\!\right) \! \e\exp\left(\!2\De\int_0^t |\vv_2(\tau)|^\kp_{\h^\sS}\dd \tau\!\right)\notag\\
&\q\times \e\exp\left(2\De\int_0^t |\Zz(\tau)|^\kp_{\h^\sS}\dd \tau\right)\notag\\
&\leq \! \exp\!\left(2\De R^\kp (\al \kp)^{-1}\right)\! \e\exp\left(\!2\De\int_0^t (|\vv_2(\tau)|^2_{\h^\sS}+1)\dd \tau\!\right) \notag\\
&\q\times \e\exp\left(2\De\int_0^t |\Zz(\tau)|^\kp_{\h^\sS}\dd \tau\right).\label{0.25}
\end{align*}
Combining this with \eqref{0.6b}, we see that inequality \eqref{e42} will be established if we prove that
\be\label{0.27}
\e\exp\left(\De\int_0^t |\Zz(\tau)|^\kp_{\h^\sS}\dd \tau\right)\leq c\,e^{c t}\q\text{ for all } t\geq 0 
\ee for some $\delta>0$ and $c>0$.
The rest of the proof is devoted to the derivation of this inequality.

\medskip

{\it Step~2: Pointwise estimates.} Let us note that, by construction, $\Zz$ is the flow of equation
\be\label{0.7}
\p_t^2 z+\gamma \p_t z-\de z+f(u)=0, \q z|_{\partial D}=0,\q [z(0),\dt z(0)]=0.
\ee
Let us differentiate this equation in time, and set $a=\dt z(t)$. Then $a$ solves
\be\label{0.8} 
\p_t^2 a+\gamma \p_t a-\de a+f'(u)\p_t u=0, \q a|_{\partial D}=0,\q [a(0),\dt a(0)]=[0,-f(u(0))].
\ee
We   write $\aA(t)=[a(t),\dt a(t)]$. Multiplying equation \ef{0.8} by~$2(-\de)^{\sS-1}(\dt a+\al a)$ and integrating over $D$, we obtain
\be\label{0.9}
\f{\ddd}{\ddd t}|\aA|_{\h^{\sS-1}}^2+\f{3\al}{2}|\aA|_{\h^{\sS-1}}^2\leq 2\int_D |f'(u)||\dt u||(-\de)^{s-1}(\dt a+\al a)|\dd x=\elll.
\ee
Let $\kp<1$ be a positive constant that will be fixed later. Then, by the triangle inequality, we have
\begin{align}
\f{\elll}{2}&\leq \int_D |f'(u)||\dt v_1|^{1-\kp}|\dt u|^\kp|(-\de)^{\sS-1}(\dt a+\al a)|\dd x\notag\\
&\q+ \int_D |f'(u)||\dt v_2|^{1-\kp}|\dt u|^\kp|(-\de)^{\sS-1}(\dt a+\al a)|\dd x\notag\\
&\q\q  +\int_D |f'(u)||a|^{1-\kp}|\dt u|^\kp|(-\de)^{\sS-1}(\dt a+\al a)|\dd x=\elll_1+\elll_2+\elll_3.\label{0.18}
\end{align}
Using the H\"older inequality, we derive
\begin{align}
\elll_1&\leq |f'(u)|_{L^{p_1}}|\dt v_1|_{L^{(1-\kp)p_2}}^{1-\kp}|\dt u|_{L^{\kp p_3}}^\kp |(-\de)^{\sS-1}(\dt a+\al a)|_{L^{p_4}},\label{0.10}\\
\elll_2 &\leq |f'(u)|_{L^{q_1}}|\dt v_2|_{L^{(1-\kp)q_2}}^{1-\kp}|\dt u|_{L^{\kp q_3}}^\kp |(-\de)^{\sS-1}(\dt a+\al a)|_{L^{q_4}}\label{0.11},\\
\elll_3 &\leq |f'(u)|_{L^{p_1}}|a|_{L^{(1-\kp)p_2}}^{1-\kp}|\dt u|_{L^{\kp p_3}}^\kp |(-\de)^{\sS-1}(\dt a+\al a)|_{L^{p_4}}\label{0.12},
\end{align}
where the exponents $p_i, q_i$ are H\"older admissible. We now need the following lemma, which is established in the appendix.
\begin{lemma}\label{0.14}
Let us take $p_1=6/\rho, p_3=2/\kp, q_1=(\rho+2)/\rho$ and $q_3=2/\kp$. Then, for $\kp>0$ sufficiently small, the exponents $p_2, p_4, q_2$ and $q_4$ can be chosen in such a way that we have the following embeddings:
\begin{alignat}{2}
H^\sS &\hookrightarrow L^{(1-\kp)p_2} , &\qquad H^{1-\sS} &\hookrightarrow L^{p_4} ,\label{0.31}\\ 
 H^1 &\hookrightarrow L^{(1-\kp)q_2} , &\qquad H^{1-\sS} &\hookrightarrow L^{q_4} .\label{0.32}
\end{alignat}
\end{lemma}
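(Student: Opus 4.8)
The statement is an elementary Sobolev-embedding bookkeeping lemma, so the plan is to reduce all four required embeddings to the standard inequality $H^s(D)\hookrightarrow L^p(D)$ for $p\le 6/(3-2s)$ (valid on the bounded three-dimensional domain $D$, with the convention that $s=1$ gives $p\le 6$ and $s>3/2$ gives $p=\infty$), together with the constraints that the four exponents $p_1,\dots,p_4$ (and separately $q_1,\dots,q_4$) be H\"older-admissible, i.e.\ $\sum_i p_i^{-1}=1$. With $p_1=6/\rho$, $p_3=2/\kp$, $q_1=(\rho+2)/\rho$, $q_3=2/\kp$ fixed, I first record that the H\"older condition forces
$$
\frac1{p_2}+\frac1{p_4}=1-\frac{\rho}{6}-\frac{\kp}{2},\qquad
\frac1{q_2}+\frac1{q_4}=1-\frac{\rho}{\rho+2}-\frac{\kp}{2}=\frac{2}{\rho+2}-\frac{\kp}{2}.
$$
So there is exactly one degree of freedom in each line, and the task is to split each right-hand side between a $p_2$-part and a $p_4$-part so that $H^\sS\hookrightarrow L^{(1-\kp)p_2}$ and $H^{1-\sS}\hookrightarrow L^{p_4}$ hold, and likewise $H^1\hookrightarrow L^{(1-\kp)q_2}$, $H^{1-\sS}\hookrightarrow L^{q_4}$.

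\textbf{Key steps.} First I would translate each embedding into an inequality on the reciprocal exponent. The embedding $H^{1-\sS}\hookrightarrow L^{p_4}$ holds iff $p_4^{-1}\ge \tfrac12-\tfrac{1-\sS}{3}=\tfrac{1+2\sS}{6}$ (this is where $\sS<1/2$, hence $1-\sS>1/2$, keeps the target exponent genuinely subcritical and leaves room); the same bound applies to $q_4^{-1}$. The embedding $H^\sS\hookrightarrow L^{(1-\kp)p_2}$ holds iff $((1-\kp)p_2)^{-1}\ge \tfrac12-\tfrac{\sS}{3}=\tfrac{3-2\sS}{6}$, i.e.\ $p_2^{-1}\ge (1-\kp)\tfrac{3-2\sS}{6}$; and $H^1\hookrightarrow L^{(1-\kp)q_2}$ holds iff $q_2^{-1}\ge (1-\kp)\tfrac16$ (using $H^1\hookrightarrow L^6$). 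Then I would check feasibility at $\kp=0$: for the $p$-line I need
$$
(1-\kp)\frac{3-2\sS}{6}+\frac{1+2\sS}{6}\ \le\ \frac1{p_2}+\frac1{p_4}\ =\ 1-\frac\rho6-\frac\kp2,
$$
which at $\kp=0$ reads $\tfrac{3-2\sS}{6}+\tfrac{1+2\sS}{6}=\tfrac{4}{6}=\tfrac23\le 1-\tfrac\rho6$, i.e.\ $\rho\le 2$ — true by hypothesis, and strict unless $\rho=2$ (the case $\rho=2$ is excluded since $\rho<2$), so there is a strictly positive slack. For the $q$-line, at $\kp=0$ I need $\tfrac16+\tfrac{1+2\sS}{6}\le \tfrac{2}{\rho+2}$, i.e.\ $\tfrac{2+2\sS}{6}=\tfrac{1+\sS}{3}\le\tfrac{2}{\rho+2}$, equivalently $(1+\sS)(\rho+2)\le 6$; since $\rho<2$ and $\sS<1/2$ we get $(1+\sS)(\rho+2)<\tfrac32\cdot 4=6$, again strict slack. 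Because both inequalities are strict at $\kp=0$ and all quantities depend continuously (in fact affinely) on $\kp$, they persist for all $\kp\in(0,\kp_0)$ with $\kp_0$ small; having chosen such a $\kp$, I simply pick $p_4$ (resp.\ $q_4$) saturating or slightly exceeding the lower bound $\tfrac{1+2\sS}{6}$ for its reciprocal, and set $p_2$ (resp.\ $q_2$) by the H\"older identity, verifying the resulting $p_2^{-1}$ (resp.\ $q_2^{-1}$) still meets its own lower bound thanks to the slack; finally I confirm all exponents are $\ge 1$ and finite.

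\textbf{Main obstacle.} There is no deep difficulty here; the only delicate point is the bookkeeping of the four simultaneous constraints and making sure the single free parameter in each line can be placed so that \emph{both} the $L^{(1-\kp)p_2}$ and the $L^{p_4}$ embeddings hold at once — i.e.\ that the sum of the two minimal reciprocals does not exceed the H\"older budget $1-\rho/6-\kp/2$. I expect the bulk of the (routine) work to be exactly the verification sketched above that this budget inequality holds with room to spare precisely because $\rho<2$ and $\sS<1/2$, and then choosing $\kp$ small enough to absorb the $\kp$-dependent corrections. I would present it by exhibiting explicit admissible values (for instance taking $p_4^{-1}=q_4^{-1}=\tfrac{1+2\sS}{6}+\eta$ for a tiny $\eta>0$ and reading off $p_2,q_2$), which makes the lemma immediate.
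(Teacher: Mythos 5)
Your proposal is correct and follows essentially the same route as the paper: fix $p_4=q_4$ at (or just beyond) the critical exponent $6/(1+2\sS)$ for $H^{1-\sS}$, determine $p_2,q_2$ from H\"older admissibility, and check that the remaining embeddings hold for $\kp$ small, which reduces to exactly the conditions $\rho+2\sS\kp\le 2$ and $(1+\sS+\kp)(\rho+2)\le 6$ that the paper verifies explicitly. The only cosmetic difference is that you argue by strict slack at $\kp=0$ plus continuity, whereas the paper writes out the explicit exponents $p_2=6/(5-\rho-2\sS-3\kp)$, $q_2=6(\rho+2)/(12-(\rho+2)(1+2\sS+3\kp))$ and explicit smallness conditions on $\kp$.
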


\medskip

{\it Step~3: Estimation of $\elll_1$ and $\elll_3$}. In view of Lemma~\ref{0.14} and inequalities~\eqref{1.8} and \eqref{0.10}, we have
\begin{align*}
\elll_1&\leq C_0 |f'(u)|_{L^{6/\rho}} \|\dt v_1\|_\sS^{1-\kp} \|\dt u\|^\kp \|(-\de)^{\sS-1}(\dt a+\al a)\|_{1-\sS}\notag\\
&\leq C_1  \|\dt v_1\|_\sS^{1-\kp}(\|u\|_1^\rho+1) \|\dt u\|^\kp \|\dt a+\al a\|_{\sS-1}.
\end{align*}
Now let us suppose that $\kp<2-\rho$. Then using   \eqref{0.6a} together with the Young inequality, we derive
\be\label{0.15}
\elll_1\leq  C_2|\vv|_{\h^\sS}^{1-\kp}(\|u\|_1^2+\|\dt u\|^2+C_\kp)  \|\dt a+\al a\|_{\sS-1}\leq C_3 \,R (\ees(\uu) +C_3) |\aA|_{\h^{\sS-1}}.
\ee
To  estimate $\elll_3$, we   again apply Lemma \ref{0.14} and inequalities \eqref{1.8} and \eqref{0.12} 
$$
\elll_3\leq C_4(\|u\|_1^\rho+1)\|a\|_{\sS}^{1-\kp}\|\dt u\|^\kp   \|\dt a+\al a\|_{\sS-1}\leq C_4(\|u\|_1^\rho+1)\|\dt u\|^\kp |\aA|_{\h^{\sS-1}}^{2-\kp}.
$$
Applying the Young inequality, we get
\be\label{0.17}
\elll_3\leq C_5(\ees(\uu)+C_{5})|\aA|_{\h^{\sS-1}}^{2-\kp}.
\ee

\medskip

{\it Step~4: Estimation of $\elll_2$}. It follows from Lemma \ref{0.14} and inequalities \eqref{1.5} and  \eqref{0.11} that
\begin{align*}
\elll_2 &\leq C_6 |f'(u)|_{L^{(\rho+2)/\rho}} \|\dt v_2\|_1^{1-\kp} \|\dt u\|^\kp \|(-\de)^{\sS-1}(\dt a+\al a)\|_{1-\sS}\label{0.11}\notag\\
&\leq C_7 \|\dt v_2\|_1^{1-\kp} \left(\int_D (F(u)+\nu u^2+C)\dd x\right)^{\rho/{\rho+2}} \|\dt u\|^\kp \|\dt a+\al a\|_{\sS-1}\\
&\leq C_8 \|\dt v_2\|_1^{1-\kp} \left(\ees(\uu)+C_8\right)^{\rho/{\rho+2}} \|\dt u\|^\kp |\aA|_{\h^{\sS-1}}.
\end{align*}
Finally, applying the Young inequality, we obtain
\be\label{0.16}
\elll_2\leq C_9 (\ees(\uu)+|\vv_2|_{\h^s}^2+C_{9})|\aA|_{\h^{\sS-1}}.
\ee

\medskip

{\it Step~5: Estimation of $|\aA|_{\h^{\sS-1}}$}. Combining inequalities \eqref{0.9}, \eqref{0.18} and~\eqref{0.15}-\eqref{0.16}, we see that
\be\label{0.19}
\f{\ddd}{\ddd t}|\aA(t)|_{\h^{\sS-1}}^2+\alpha|\aA(t)|_{\h^{\sS-1}}^2\leq C_{10}\, R \left(\ees(\uu(t))+|\vv_2(t)|_{\h^s}^2+C_{10}\right)\left(|\aA(t)|_{\h^{\sS-1}}^{2-\kp}+1\right).
\ee
We now need an auxiliary result, whose proof is presented in the appendix.
\begin{lemma}\label{0.20}
Let $x(t)$ be an absolutely continuous nonnegative function   satisfying the differential inequality
\be\label{0.21}
\dt x(t)+\al x(t)\leq g(t)x^{1-\beta}(t)+b(t)\q\text{ for all } t\in [0, T],
\ee
where   $\al, T$, and $\beta<1$ are positive constants and $g(t)$ and $b(t)$ are nonnegative functions integrable on $[0, T]$. Then we have
\be\label{0.28}
\f{\al}{2}\int_0^t x^\beta(\tau)\dd \tau\leq \beta^{-1}(1+x(0))^{\beta}+\int_0^t (\al+g(\tau)+b(\tau))\dd \tau\q\text{ for } t\in [0, T].
\ee
\end{lemma}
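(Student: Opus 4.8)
The plan is to treat the differential inequality \eqref{0.21} as a scalar ODE comparison and convert it into an integral bound on $\int_0^t x^\beta$ by an integrating-factor argument combined with a Young-type splitting of the nonlinear term $g(t)x^{1-\beta}(t)$. First I would multiply \eqref{0.21} by the integrating factor $e^{\alpha t}$, obtaining
\be
\frac{\ddd}{\ddd t}\bigl(e^{\alpha t}x(t)\bigr)\le e^{\alpha t}\bigl(g(t)x^{1-\beta}(t)+b(t)\bigr),\q t\in[0,T],
\ee
and then integrate from $0$ to $t$. This already yields a bound of the form $x(t)\le x(0)e^{-\alpha t}+\int_0^t e^{-\alpha(t-\tau)}(g(\tau)x^{1-\beta}(\tau)+b(\tau))\dd\tau$, but this still contains $x^{1-\beta}$ on the right and is not yet in the form \eqref{0.28}.

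The key idea is instead to apply Young's inequality to the product $g x^{1-\beta}$. Writing $1-\beta$ and $\beta$ as conjugate-type exponents (more precisely, using $g\,x^{1-\beta}=g\cdot x^{1-\beta}\le \beta' g^{1/\beta}\cdot\text{const}+\text{const}\cdot x$ — one balances the powers so that the $x$-power produced is exactly $x$, which is absorbable by the $\alpha x$ term on the left, while the $g$-term becomes a function of $t$ alone that is integrable). Actually the cleaner route, and the one I would carry out, is: since $\beta<1$, write $x^{1-\beta}=x^{1-\beta}\cdot 1$ and apply Young with exponents $\tfrac{1}{1-\beta}$ and $\tfrac1\beta$ to get, for any $\eta>0$,
\be
g(t)x^{1-\beta}(t)\le \eta\, x(t)+C_\eta\, g(t)^{1/\beta}.
\ee
Hmm — but $g^{1/\beta}$ need not be integrable under the stated hypotheses (only $g\in L^1$ is assumed). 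So instead I would split based on comparing $x$ to $1$: on the set where $x(t)\le 1$ we have $x^{1-\beta}\le 1$, and on the set where $x(t)>1$ we have $x^{1-\beta}(t)\le x(t)$, hence in all cases $x^{1-\beta}(t)\le 1+x(t)$... but that again re-inserts a full power of $x$ with coefficient $g(t)$, which is not small. The resolution is to pass to the variable $y=x^\beta$ (or equivalently $y=(1+x)^\beta$ to avoid differentiability issues at $x=0$), since $\tfrac{\ddd}{\ddd t}y = \beta x^{\beta-1}\dt x$, and dividing \eqref{0.21} by $x^{1-\beta}$ gives
\be
x^{\beta-1}(t)\dt x(t)+\alpha x^{\beta}(t)\le g(t)+b(t)x^{\beta-1}(t)\le g(t)+b(t),
\ee
using $x^{\beta-1}\le 1$ when $x\ge 1$ and handling $x<1$ by working with $(1+x)^\beta$ throughout. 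Thus $\tfrac1\beta\,\tfrac{\ddd}{\ddd t}(1+x)^\beta+\tfrac\alpha2(1+x)^\beta\le \alpha+g(t)+b(t)$ after elementary manipulations absorbing the $+1$, and integrating this linear differential inequality in $(1+x)^\beta$ over $[0,t]$ — dropping the (nonnegative) $(1+x(t))^\beta$ endpoint term and the exponential factors, which only help — gives exactly
\be
\frac{\alpha}{2}\int_0^t (1+x(\tau))^\beta\,\dd\tau\le \beta^{-1}(1+x(0))^\beta+\int_0^t(\alpha+g(\tau)+b(\tau))\,\dd\tau,
\ee
and \eqref{0.28} follows since $x^\beta\le(1+x)^\beta$.

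The main obstacle I anticipate is the bookkeeping around the division by $x^{1-\beta}$ when $x$ can vanish or be small: one must either justify the formal computation by an approximation argument (regularising $x$ to $x+\varepsilon$ and letting $\varepsilon\to0$, using absolute continuity and dominated convergence) or, more slickly, carry the whole argument with $\tilde x:=1+x$ in place of $x$ from the start, noting $\dt{\tilde x}=\dt x$ and $\tilde x\ge 1$ so that $\tilde x^{\,\beta-1}\le 1$ and $\tilde x^{\,1-\beta}\le\tilde x$ hold unconditionally, and that \eqref{0.21} implies $\dt{\tilde x}+\alpha\tilde x\le g\,\tilde x^{1-\beta}+(b+\alpha)$ since $\alpha x\le\alpha\tilde x$ and $x^{1-\beta}\le\tilde x^{1-\beta}$. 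Everything else is a one-line integration of a scalar linear inequality, and the constants $\beta^{-1}$ and $\alpha/2$ in \eqref{0.28} come out precisely from this normalisation.
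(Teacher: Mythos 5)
Your argument is correct and is essentially the paper's own proof: both pass to the absolutely continuous quantity $(1+x)^\beta$, use $(1+x)^{\beta-1}\le 1$ and $x^{1-\beta}\le(1+x)^{1-\beta}$ to absorb the $g\,x^{1-\beta}$ and $b$ terms, bound $-\al x(1+x)^{\beta-1}$ by $-\tfrac\al2(\cdot)^\beta+\al$, and integrate directly over $[0,t]$, dropping the nonnegative endpoint term. The only cosmetic remark is that no integrating factor or exponential factors are needed (or appear) in the final integration, so that aside can simply be deleted.
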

Applying this lemma to inequality \eqref{0.19}, we obtain
\begin{align}
\f{\al}{2} \int_0^t |\aA(\tau)|_{\h^{\sS-1}}^\kp\dd\tau&\leq 2\kp^{-1}(1+|\aA(0)|_{\h^{\sS-1}}^2)^{\kp/2}+\al t\notag\\
&\q+2\,C_{10}\, R \int_0^t \left(\ees(\uu(\tau))+|\vv_2(\tau)|_{\h^s}^2+C_{10}\right)\ddd\tau.\label{0.22}
\end{align}

\medskip

{\it Step~6: Completion of the proof}. Note that
$$
|\Zz |_{\h^\sS}^2=\|z \|_{\sS+1}^2+\|\dt z +\alpha z \|_\sS^2=\|\de z \|_{\sS-1}^2+\|a+\alpha z \|_\sS^2.
$$
On the other, in view of \eqref{0.7}, we have
$$
\|\de z \|_{\sS-1}^2=\|\dt a +\gamma a +f(u )\|_{\sS-1}^2\leq C_{11}(|  \aA   |_{\h^{\sS-1}}^2+\|f(u )\|^2),
$$
whence we get
\be\label{e35}
|\Zz |_{\h^\sS}^2\leq C_{12}\left(|\aA |_{\h^{\sS-1}}^2+\ees^3(\uu )+C_{12}\right).
\ee
It follows that
$$
|\Zz |_{\h^\sS}^\kp\leq C_{13}\left(|\aA |_{\h^{\sS-1}}^{\kp}+ \ees (\uu )+C_{13}\right),
$$
provided $\kp<2/3$. Multiplying this inequality by $\al/2$, integrating over $[0, t]$ and using \eqref{0.22} together with the fact that 
\be\label{e38}
|\aA(0)|_{\h^{\sS-1}}^2=\|f(u(0))\|_{\sS-1}^2\leq \|f(u(0))\|^2\leq C_{14}(\|\vv\|_1^6+1),
\ee
we derive
$$
\f{\al}{2} \int_0^t |\Zz(\tau)|_{\h^\sS}^{\kp}\dd\tau \leq C_{15} \left(1+   \int_0^t \left[\ees(\uu(\tau))+|\vv_2(\tau)|_{\h^\sS}^2+C_{15}\right]\ddd\tau \right),
$$where $C_{15}$ depends on  $R$.
 Multiplying this  inequality   by a small constant $\De(R)>0$, taking the exponent  and then the expectation, and using  
 \eqref{0.6b} together with Proposition 3.2 in~\cite{DM2014}, we   
 derive \eqref{0.27}.
\ep

\subsection{Higher moments of regular solutions}\label{S:moments}

 For any $m\geq 1$, let   $\we_m$ and $\tilde \we_m$ be the functions given by \ef{e27} and \eqref{e32}. The following result shows that  they are both   Lyapunov functions   for the   trajectories of problem   \eqref{0.1},~\eqref{0.3}.
\begin{proposition}
For any $\vv\in\h^\sS$, $m\ge1$, and $t\ge0$, we have
\begin{align}
\e_\vv\we_m(\uu_t)&\leq 2e^{-\al m t}\we_m(\vv)+C_m \label{e30},\\
\e_\vv\tilde\we_m(\uu_t )&\leq 2e^{-\al m t}\tilde\we_m(\vv)+C_m.\label{e34}
\end{align}
\end{proposition}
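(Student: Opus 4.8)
\textit{Proof plan.}
The plan is to prove the two estimates together by reducing \eqref{e34} to \eqref{e30} plus the single bound $\e_\vv\exp(\kp\ees(\uu_t))\le C_m e^{-\al m t}\exp(\kp\ees(\vv))+C_m$ — since $\tilde\we_m=\we_m+\exp(\kp\ees)$, adding the two gives \eqref{e34} (with the leading constant reduced to $2$ by a routine bookkeeping) — and to establish \eqref{e30} by bounding separately the three summands $1$, $\ees^{4m}(\uu_t)$ and $|\uu_t|_{\h^\sS}^{2m}$ of $\we_m(\uu_t)$; the first is trivial once $C_m\ge1$. The guiding principle throughout is that every intermediate estimate will involve $\ees(\vv)$ only through powers at most $3m$, multiplied by a factor decaying at rate at least $\tfrac12\al m$; since such a term is dominated, after Young's inequality, by an arbitrarily small multiple of $e^{-\al m t}\ees(\vv)^{4m}$ plus a constant, this is exactly why $\we_m$ is taken with the energy to the power $4m$ (strictly above the $3m$ that actually occurs), and why one introduces both weights $\we_m$ and $\tilde\we_m$.

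For the two energy summands I would apply It\^o's formula (justified by a Galerkin truncation) to $t\mapsto\bar\ees^{\,p}(\uu_t)$, where $\bar\ees:=\ees+C_0\ge0$ and $p\in\{3m,4m\}$, and use the dissipativity conditions \eqref{1.5}--\eqref{1.6} together with \eqref{a0.4} to get $\frac{\ddd}{\ddd t}\e_\vv\bar\ees^{\,p}(\uu_t)\le-\frac{p\al}{2}\e_\vv\bar\ees^{\,p}(\uu_t)+C_p$, hence $\e_\vv\bar\ees^{\,p}(\uu_t)\le e^{-p\al t/2}\bar\ees^{\,p}(\vv)+C_p$; as $p\al/2\ge m\al$ and $\bar\ees^{\,p}$ differs from $\ees^{\,p}$ only by lower order, this yields the bound for $\ees^{4m}(\uu_t)$ and, with $p=3m$, the moment bound for $\e_\vv\ees^{3m}(\uu_t)$ needed below. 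For $\exp(\kp\ees(\uu_t))$ I would invoke the exponential energy estimate of \cite{DM2014} (Proposition~3.1); the key feature is that the \emph{energy} dissipates linearly, $\ees(\uu_t)\le e^{-\al t}\ees(\vv)+C$ up to lower order, so that $\exp(\kp\ees(\uu_t))$ decays \emph{super}-exponentially when $\ees(\vv)\gg1$ (its logarithm is roughly $e^{-\al t}$ times that of $\exp(\kp\ees(\vv))$), and an elementary two-case computation converts this into the stated bound for every $m\ge1$. This step uses no regularity beyond $\h$.

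The substantial part is the estimate of $\e_\vv|\uu_t|_{\h^\sS}^{2m}$, for which I would use the decomposition $\uu_t=\vv_1(t)+\vv_2(t)+\Zz(t)$ of Section~\ref{S:4.1}. By the triangle inequality in $\h^\sS$ and Young's inequality one writes $|\uu_t|_{\h^\sS}^{2m}\le(1+\varepsilon)|\vv_1(t)|_{\h^\sS}^{2m}+C_{m,\varepsilon}\big(|\vv_2(t)|_{\h^\sS}^{2m}+|\Zz(t)|_{\h^\sS}^{2m}\big)$: the first term produces the leading $e^{-\al m t}|\vv|_{\h^\sS}^{2m}$ via \eqref{0.6a}, and $\e|\vv_2(t)|_{\h^\sS}^{2m}\le C_m$ holds uniformly in $t$ and $\vv$ because $\vv_2$ is the Gaussian solution of the \emph{linear} damped wave equation issued from the origin with spatially regular noise (this is where \eqref{a0.4} enters). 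For $\Zz$ I would use the pointwise bound \eqref{e35}, $|\Zz|_{\h^\sS}^2\le C(|\aA|_{\h^{\sS-1}}^2+\ees^3(\uu)+C)$, so that after raising to the $m$-th power and taking expectations there remain $\e_\vv\ees^{3m}(\uu_t)$ — estimated above — and $\e_\vv|\aA(t)|_{\h^{\sS-1}}^{2m}$. This last bound is the delicate point: differentiating \eqref{0.1} in time, $\aA=[\p_t z,\p_t^2 z]$ solves a \emph{linear} damped wave equation with source $f'(u)\p_t u$, whose $H^{\sS-1}$-norm is bounded by a power of $\ees(\uu)$ with \emph{no} factor of $|\uu|_{\h^\sS}$ (because $H^{\sS-1}$ is a negative Sobolev space and $\sS<1-\rho/2$); from the energy estimate for $|\aA(t)|_{\h^{\sS-1}}$ (see \eqref{0.19}) one obtains $|\aA(t)|_{\h^{\sS-1}}$ as the convolution of this source with the damped-wave kernel, and taking $2m$-th moments, using generalized H\"older in the convolution, the energy moment bounds of the previous paragraph, and the initial bound \eqref{e38}, $|\aA(0)|_{\h^{\sS-1}}^2\le C(\ees^3(\vv)+1)$, leads to a bound of the form $C_m e^{-\al m t}(\ees^{q}(\vv)+1)+C_m$ with $q\le 4m$, which the Young device again absorbs into $e^{-\al m t}\ees^{4m}(\vv)$. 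Collecting the three summands yields \eqref{e30}, and adding the exponential-energy estimate yields \eqref{e34}. The genuinely hard point is the $2m$-th-moment estimate for $\aA$, where one has to integrate a linear inhomogeneous wave equation with a random, energy-dependent source against the (slowly decaying) wave kernel while keeping every power of $\ees(\vv)$ below $4m$; everything else is standard energy analysis and bookkeeping of constants.
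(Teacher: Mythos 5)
Your skeleton is the paper's: the same splitting of the flow into a linear part plus the correction $\Zz$, the reduction of $\Zz$ to $\aA=[\dot z,\ddot z]$ with the key $H^{\sS-1}$ bound of the source $f'(u)\p_t u$ by a power of $\ees(\uu)$ (cf.\ \eqref{e35}, \eqref{e38}), the energy moment bounds \eqref{e31}, and the reduction of \eqref{e34} to \eqref{e30} plus an exponential-moment estimate from \cite{DM2014}. However, two steps as written would fail. First, your guiding principle is quantitatively wrong: a term $e^{-\lambda t}\ees^{3m}(\vv)$ can be absorbed into $\varepsilon e^{-\al m t}\ees^{4m}(\vv)+C_m$ only if $\lambda\ge\tfrac34\al m$ (optimize $e^{-\lambda t}E^{3m}-\varepsilon e^{-\al m t}E^{4m}$ over $E$: the supremum is of order $e^{(3\al m-4\lambda)t}$), so a rate of $\tfrac12\al m$ does not suffice, and a rate must grow linearly in $m$. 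This matters exactly in your convolution treatment of $\e_\vv|\aA(t)|_{\h^{\sS-1}}^{2m}$: if you apply H\"older in the Duhamel integral with the damped-wave kernel itself as weight, the source contribution decays only at the kernel rate, of order $\al$ and independent of $m$, which is below the threshold for large $m$ and destroys the constant $2$ (indeed any constant). You must arrange the decay acting on the $2m$-th power of the source to be proportional to $m$ — e.g.\ split the kernel before H\"older so that almost all of its decay multiplies the $2m$-th power, or, as the paper does, raise the differential inequality \eqref{0.9} (with the source already bounded by $\tfrac{\al}{4}|\aA|_{\h^{\sS-1}}^2+C(\ees^3(\uu)+C)$) to the $m$-th power \emph{before} integrating, which yields $\tfrac{\ddd}{\ddd t}|\aA|_{\h^{\sS-1}}^{2m}\le-\al m|\aA|_{\h^{\sS-1}}^{2m}+C(\ees^{3m}(\uu)+C)$ and hence, via \eqref{e31}, the prefactor $e^{-\al m t}$ in front of $\ees^{3m}(\vv)$. (Also, \eqref{0.19} is the $R$-dependent inequality with the exponent $2-\kp$ used for exponential tightness; the clean inequality you need is derived separately in the paper's proof of this proposition.)

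Second, the exponential part of $\tilde\we_m$. Your claim that a bound $\e_\vv\exp(\kp\ees(\uu_t))\le C_m e^{-\al m t}\exp(\kp\ees(\vv))+C_m$ can have its leading constant ``reduced to $2$ by routine bookkeeping'' is not available: there is no Young-type absorption of $(C_m-2)\,e^{-\al m t}\exp(\kp\ees(\vv))$ into the polynomial part of $\tilde\we_m(\vv)$ plus a constant (take $t$ small and $\ees(\vv)$ large). You must prove the bound with constant at most $2$ from the start; the paper gets constant $1$ by combining the integral inequality of Section~3.2 of \cite{DM2014} with the pointwise inequality $e^r(-\al r+C)\le-\al m e^r+C'$ and Gronwall. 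Relatedly, the ``its logarithm is roughly $e^{-\al t}$ times'' heuristic cannot be deduced from the decay of $\e_\vv\ees(\uu_t)$ — Jensen goes the wrong way — so your two-case computation needs as input an exponential-moment estimate with the decay inside the exponential and with leading constant $1$; with a multiplicative constant $C>1$ the small-$t$, large-energy regime breaks the argument. With these two repairs your plan coincides with the paper's proof.
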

\bp
{\it Step~1: Proof of \eqref{e30}}.   
We split the flow $\uu(t;\vv)$ to the sum $\uu(t;\vv)=\tilde \uu(t)+\Zz(t)$, where $\tilde\uu$ is the flow issued from $\vv$ corresponding to the solution of~\ef{0.1} with $f=0$. Let us note that here $\Zz=[z, \dt z]$ is the same as in   Section \ref{S:4.1}. A standard argument shows that
\be\label{e18}
\e|\tilde\uu(t)|_{\h^\sS}^{2m}\leq e^{-\al m t}|\vv|_{\h^\sS}^{2m}+C(m, \|h\|_1, \BBB_1).
\ee
As in Section \ref{S:4.1}, we set $a =\dt z $ and write $\aA =[a , \dt a ]$. Notice that thanks to the H\"older inequality, the Sobolev   embeddings $H^{1}\hookrightarrow L^{6}$ and~$H^{1-\sS}\hookrightarrow L^{6/(3-\rho)}$ for $\sS<1-\rho/2$, and inequality $|\uu|_\h^2\le 2|\ees(\uu)|+3C$, we can  estimate the right-hand side of inequality \ef{0.9} by 
\begin{align*}
\elll&\leq C_1(|u|^\rho_{L^6}+1)\|\dt u\| |(-\de)^{\sS-1}(\dt a+\al a)|_{L^{6/(3-\rho)}}\\
&\leq C_2(\|u\|^2_{1}+1)\|\dt u\| \|(-\de)^{\sS-1}(\dt a+\al a)\|_{ {1-\sS}}\leq C_3(|\uu|_\h^3+1)\|\dt a+\al a\|_{\sS-1}\\
&\leq \f{\al}{4}|\aA |_{\h^{\sS-1}}^2+C_4\left(\ees^3(\uu )+C_4\right).
\end{align*}
   Combining this with \ef{0.9}, we infer
$$
\f{\ddd}{\ddd t}|\aA |_{\h^{\sS-1}}^2\leq -\f{5\al}{4}|\aA |_{\h^{\sS-1}}^2+C_4\left(\ees^3(\uu )+C_4\right).
$$
It follows that\footnote{All the constants $C_i, i\ge 5$ depend on $m$. }
$$
\f{\ddd}{\ddd t}|\aA |_{\h^{\sS-1}}^{2m}=m|\aA |_{\h^{\sS-1}}^{2m-2}\f{\ddd}{\ddd t}|\aA |_{\h^{\sS-1}}^2
\leq-\al m|\aA |_{\h^{\sS-1}}^{2m}+C_5 \left(\ees^{3m}(\uu )+C_5\right),
$$
where we used the Young inequality. Taking the mean value in this inequality and applying the comparison principle, we derive
$$
\e|\aA(t)|_{\h^{\sS-1}}^{2m}\leq e^{-\al m t}|\aA(0)|_{\h^{\sS-1}}^{2m}+ C_6  \int_0^te^{\al m (\tau-t)}  \left(\e\ees^{3m}(\uu(\tau))+ C_6\right) \dd \tau .
$$
Combining this with \ef{e35} and \ef{e38}, we get
\begin{align*}
\e|\Zz(t)|_{\h^\sS}^{2m}&\leq C_7 \left(e^{-\al m t} \ees^{3m}(\vv)  +  \int_0^te^{\al m(\tau-t)}\e \ees^{3m}(\uu(\tau))\dd \tau+C_7\right) .
\end{align*}
Using the It\^o formula, it is not difficult to show (cf. Proposition 3.1 in \cite{DM2014}) that 
\be\label{e31}
\e \ees^{k}(\uu(t))\leq \exp(-\al k t)\ees^{k}(\vv)+C(k, \|h\|,\BBB)\q\text{ for any }k\geq 1.
\ee
It follows from the last two inequalities that
$$
\e|\Zz(t)|_{\h^\sS}^{2m}\leq C_8 (e^{-\al m t} \ees^{3m}(\vv)+C_8).
$$
Combining this with the inequality 
$$
(A+B)^{2m}\leq 2 A^{2m}+  C_9 B^{2m}\q\text{ for any } A, B\ge 0.
$$ and  \ef{e18}, we infer
\begin{align*}
\e |\uu(t)|_{\h^\sS}^{2m}&\leq\e (|\tilde\uu(t)|_{\h^\sS}+|\Zz(t)|_{\h^\sS})^{2m}\leq 2 \e|\tilde\uu(t)|_{\h^\sS}^{2m}+  C_{9} \e|\Zz(t)|_{\h^\sS}^{2m}\notag\\
&\le 2e^{-\al m t}|\vv|_{\h^\sS}^{2m}+C_{10}  (e^{-\al m t}\ees^{3m}(\vv)+C_{10}).
\end{align*}
So that we have
\begin{align*}
\e \we_m(\uu(t))&\leq 2e^{-\al m t}|\vv|_{\h^\sS}^{2m}+C_{10} (e^{-\al m t} \ees^{3m}(\vv)+C_{10})  +\e \ees^{4m}(\uu(t))\notag\\
&\leq 2e^{-\al m t}\left(|\vv|_{\h^\sS}^{2m}+\ees^{4m}(\vv)\right)+C_{11} =2e^{-\al m t}\we_m(\vv)+C_{11} ,
\end{align*}
where we used the Young inequality together with \ef{e31}. 

\medskip
{\it Step~2: Proof of~\eqref{e34}}.
   It was shown in Section~3.2 of~\cite{DM2014}, that for any~$\kp\leq (2\al)^{-1}\BBB$, we have
\begin{align*}
\e_\vv\exp[\kp\ees(\uu(t))]&\leq\exp(\kp\ees(\vv))\\
&\q+\kp\int_0^t \e_\vv\exp[\kp\ees(\uu(\tau))](-\al\ees(\uu(\tau))+C(\BBB, \|h\|))\dd \tau.
\end{align*}
Using this with inequality
$$
 e^r(-\al r+C)\leq -\al m e^{r}+C_{12} \q\text{ for any }r\ge-C
$$
and applying the Gronwall lemma, we see that
$$
\e_\vv\exp[\kp\ees(\uu(t))]\leq e^{-\al m t}\exp(\kp\ees(\vv))+C_{13} .
$$
Finally, combining this inequality with \ef{e30}, we arrive at \ef{e34}.
\ep

\section{Proof of  Theorem \ref{T:1.1}}\label{S:5}

The results of Sections~\ref{S:2}-\ref{S:4} imply that    the growth conditions, the uniform irreducibility and uniform Feller properties  in     Theorem~\ref{T:5.3} are satisfied if we take  
\begin{gather*}
X=\h, \,\,\, X_R= B_{\h^\sS}(R) ,  \,\,\, P_t^V(\uu,\Gamma)=(\PPPP_t^{V*} \delta_\uu ) (\Gamma),\,\,\, \\    \wwww(\uu)=1+|\uu|_{\h^\sS}^2+ \EE^4(\uu) 
,\,\,\, \CC= \UU, \,\,\, V\in \UU_\delta
\end{gather*}
for sufficiently large integer  $R_0\ge1$, small $\delta>0$, and     any $\sS\in(0,1-\rho/2)$. Let us  show that the time-continuity property is also verified.

\medskip
{\it Step~1:    Time-continuity property}.
 We need to show that the function $t\mapsto\PPPP^V_tg(\uu) $ is continuous from~$\R_+$ to $\R$ for any 
  $g\in C_\wwww(\h^\sS)$ and $\uu\in\h^\sS$ (recall that $X_\infty=\h^\sS$). For any $T,t\ge0$ and $\uu\in \h^\sS$, we have
  \begin{align}\label{S6:1}
  \PPPP^V_Tg(\uu)-\PPPP^V_tg(\uu)&=\e_\uu \left\{\left[\Xi_V(T)-\Xi_V(t)  \right] g(\uu_t)\right\}+ \e_\uu\left\{\left[ g(\uu_T)- g(\uu_t)  \right] \Xi_V(T)\right\}\nonumber\\&=: S_1+S_2,
  \end{align}
where $\Xi_V$ is defined by \eqref{S6ogt}. As $V$ is bounded and $g\in C_\wwww(\h^\sS) $, we see that 
\begin{align*} 
|S_1|&\le  \e_\uu \left\{\left|\exp\left(\int_t^TV(\uu_\tau)\dd \tau\right)- 1 \right| \Xi_V(t)|g(\uu_t)|\right\}\nonumber\\&\le C_1 \left(e^{|T-t|\|V\|_\infty}-1\right)e^{T\|V\|_\infty} \e_\uu \wwww(\uu_t).
\end{align*}
Combining this with \eqref{e30}, we get   $S_1\to0$ as $t\to T$. To  estimate $S_2$, let us take any $R>0$ and write
\begin{align*} 
e^{-T\|V\|_\infty} | S_2|&\le \e_\uu \left| g(\uu_T)- g(\uu_t)  \right| \\&= \e_\uu \left\{\I_{G_R^c} \left| g(\uu_T)- g(\uu_t)  \right| \right\}+ \e_\uu \left\{\I_{G_R} \left| g(\uu_T)- g(\uu_t)  \right| \right\}\\&=: S_3+S_4, 
\end{align*}where $G_R:=\{u_t, u_T\in X_R\}$. From the Chebyshev inequality, the fact that $g\in C_\wwww(\h^\sS) $, and inequality \eqref{e30}  we derive 
\begin{align*} 
S_3 &\le C_1 \e_\uu \left\{\I_{G_R^c}  (\wwww(\uu_T)+ \wwww(\uu_t)  ) \right\} \\&\le C_1 R^{-2} \e_\uu \left\{ \wwww^2(\uu_T)+ \wwww^2(\uu_t)   \right\}  \le C_2 R^{-2}   \wwww^2(\uu).   
\end{align*} 
On the other hand,  by the Lebesgue theorem on dominated convergence,  for any $R>0$,  we have
$S_4 \to 0$ as $t\to T$. Choosing $R>0$ sufficiently large and $t$ sufficiently close to $T$, we see that $S_3+S_4$ can be made arbitrarily small. This shows that $S_2\to0$ as $t\to T$ and proves the time-continuity property.

\medskip
{\it Step~2: Application of Theorem~\ref{T:5.3}}.
     We   conclude from Theorem~\ref{T:5.3} that there is an eigenvector $\mu_V\in \ppp(\h)$ for the semigroup~$\PPPP_t^{V*}$ corresponding to some positive eigenvalue $\la_V$, i.e.,  $\PPPP_t^{V*}\mu_V=\la_V^t\mu_V$ for any~$t>0$. Moreover, the semigroup~$\PPPP_t^{V}$
      has an eigenvector $h_V\in C_\wwww(\h^\sS)\cap C_+(\h^\sS)$ corresponding to~$\lambda_V$ such that $\lag h_V, \mu_V\rag=1$. 
     The uniqueness of $\mu_V$ and $h_V$ follows immediately from~\eqref{a1.5} and~\eqref{a1.6}.  The uniqueness of $\mu_V$  implies that it does not depend on~$m$ and  \eqref{momentestimate} holds for any $m\ge1$.
It remains to prove limits~\eqref{a1.5} and~\eqref{a1.6}.

\medskip
{\it Step~3: Proof of~\eqref{a1.5}}.
By~\eqref{5.12}, we have \eqref{a1.5} for any $\psi\in \UU$.  To establish the   limit   for any $\psi \in C_\wwww(\h^\sS)$, we apply an approximation argument similar to the one used in Step 4 of the proof of Theorem~5.5 in~\cite{JNPS-2014}. Let us   take a sequence~$\psi _n  \in \UU$ such that $\|\psi _n\|_\ty\le \|\psi \|_\ty$ and~$\psi _n \to \psi $ as $n\to\ty$, uniformly on bounded
subsets of $\h^\sS$.  If we define
$$
\Delta_t(g)=\sup_{\uu\in X_R}\bigl|\lambda_V^{-t}\PPPP_t^Vg(\uu)-\langle g,\mu_V\rangle h_V(\uu)\bigr|,
\quad \|g\|_{_R}=\sup_{\uu\in X_R}|g(\uu)|,
$$then 
$$
\Delta_t(\psi )\le \Delta_t(\psi _n)+\|h_V\|_{R}\,|\langle \psi -\psi _n,\mu_V\rangle|+\lambda_V^{-t}\|\PPPP_t^V(\psi -\psi _n)\|_{R}
$$for any $t\ge 0$ and $n\ge1$.
 In view of~\eqref{a1.5} for $\psi _n$ and the Lebesgue theorem on dominated convergence,  
\begin{gather*}
\Delta_t(\psi _n) \to0\quad\mbox{as $t\to\infty$ for any fixed $n\ge1$},\\
|\langle \psi -\psi _n,\mu_V\rangle|\to0\quad\mbox{as $n\to\infty$}. 
\end{gather*}
Thus, it suffices to show that
\begin{equation} \label{6.57}
\sup_{t\ge 0}\lambda_V^{-t}\|\PPPP_t^V(\psi -\psi _n)\|_{R}\to0\quad\mbox{as $n\to\infty$}.
\end{equation}
 To   this end, for any $\rho>0$, we write
$$
\|\PPPP_t^V(\psi -\psi _n)\|_{R}\le J_1(t,n,\rho)+J_2(t,n,\rho),
$$
where 
$$
J_1(t,n,\rho)=\|\PPPP_t^V\bigl((\psi -\psi _n)\I_{X_\rho}\bigr)\bigr\|_{R}, \quad 
J_2(t,n,\rho)=\|\PPPP_t^V\bigl((\psi -\psi _n)\I_{X_\rho^c}\bigr)\|_{R}. 
$$
Since $\psi _n\to \psi $ uniformly on $X_\rho$, we have 
$$
J_1(t,n,\rho)\le \es(n,\rho)\,\|\PPPP_t^V\mathbf1\|_{R},
$$
where $\es(n,\rho)\to0$ as $n\to\infty$. 
Using convergence \eqref{a1.5} for $\psi=\mathbf1$, we see that
\be\label{a234}
\lambda_V^{-t}\|\PPPP_t^V{\mathbf1}\|_R \le C_3(R)\quad\mbox{for all $t\ge0$}.
\ee
Hence,  
$$
\sup_{t\ge 0}\lambda_V^{-t}J_1(t,n,\rho)\le C_3(R)\,\es(n,\rho)\to0\quad\mbox{as $n\to\infty$}. 
$$We use~\eqref{a5.8} and \eqref{a234},  to estimate $J_2$:
\begin{align*}
\la_V^{-t}J_2(t,n,\rho)
&\le 2\|\psi \|_\infty \rho^{-2} \la_V^{-t} \|\PPPP_t^V \wwww \|_{R}\le C_{4}(R) \|\psi \|_\infty \rho^{-2} \la_V^{-t} \|\PPPP_t^V \mathbf1 \|_{R_0} \\
&\le  C_{4}(R) \|\psi \|_\infty \rho^{-2} C_3({R_0}).  
\end{align*}
Taking  first $\rho$ and then~$n$ sufficiently large, we see that $\sup_{t\ge 0}\lambda_V^{-t}\|\PPPP_t^V(\psi -\psi_n)\|_{R}$
   can be made arbitrarily small.  This proves~\eqref{6.57} 
and  completes the proof of~\eqref{a1.5}.

\medskip
{\it Step~4: Proof of~\eqref{a1.6}}. Let us   show that
$$
\lambda_V^{-t}\lag\PPPP_t^V \psi, \nu\rag\to\lag \psi,\mu_V\rag \lag h_V, \nu\rag\q\text{as $t\to\infty$}
$$ for any $\psi\in C_b(\h)$.
 In view of \eqref{a1.5}, it suffices to show that
\begin{equation} \label{6.63}
 \sup_{t\ge0}\left\{\int_H\I_{X_R^c}\bigl|\lambda_V^{-t}\PPPP_t^V \psi (\uu)-  \langle \psi,\mu_V\rangle h_V(\uu)\bigr|\, \nu(\ddd\uu) \right\} \to0\,\,\,
 \mbox{as $R\to\infty$}.
\end{equation}
From~\eqref{6.0015} and~\eqref{a234}  we derive that
$$
\|\PPPP_t^V \psi\|_{L_\wwww^\infty}\le \| \psi \|_\infty \|\PPPP_t^V \mathbf1\|_{L_\wwww^\infty}\le C_5\|\PPPP_t^V{\mathbf1}\|_{R_0}\le C_6(R_0)\lambda_V^t,\quad\mbox{  $t\ge0$},
$$
hence
$$
\bigl|\lambda_V^{-k}\PPPP_t^V \psi(\uu)\bigr|\le C_6(R_0)\wwww(\uu), \quad \uu\in \h^\sS, \quad t\ge0. 
$$
Since $h_V\in C_\wwww(\h^\sS)$ and 
$$
 \int_\h\I_{X_R^c}(\uu)\,\wwww(\uu)\,\nu(\ddd \uu) \to0\quad\mbox{as $R\to\infty$}, 
$$
we obtain~\eqref{6.63}.
This completes the proof of Theorem~\ref{T:1.1}.

\section{Appendix}
\label{S:6}

 \subsection{Local version of Kifer's theorem}
\label{s4}

In \cite{kifer-1990},  Kifer  established a sufficient condition for the validity of the LDP for a family  of  random probability measures on a   compact metric space. This result was  extended by     Jak$\check{\rm s}$i\'c et al.~\cite{JNPS-2014} to the case of a general Polish   space.  
 In this section, we  obtain  a local version of these results. Roughly speaking,  we assume  the existence of a pressure function (i.e., limit~\eqref{2.1a}) and the uniqueness of the equilibrium state for functions $V$ in a set $\VV$, which is not necessarily dense in the space of bounded continuous functions.  We prove the LDP with a     lower bound in which the     infimum of the rate function is   taken  over  a subset of the equilibrium states.
    To give the exact formulation of  the result, we first introduce some notation and  definitions. Assume that~$X$ is a Polish space,  and~$\zeta_\theta$  is a  random probability measure on~$X$   defined on some probability space~$(\Omega_\theta, \FF_\theta, \pP_\theta)$, where the index $\theta$ belongs to some       directed set\footnote{i.e.,  a partially ordered set  whose every finite subset  has an upper bound.}~$\Theta$. Let $r:\Theta\to \R$ be a   positive function such that $\lim_{\theta\in\Theta} r_\theta=+\ty$. For any~$V\in C_b(X)$, let us set
 \begin{equation}\label{2.1}
Q(V):=\limsup_{\theta\in\Theta} \frac{1}{r_\theta}\log\E_\theta 
\exp\bigl(r_\theta\lag V, \zeta_\theta\rag\bigr),
\end{equation} 
where~$\E_\theta$ is the expectation with respect to $\pP_\theta$. 
The function~$Q:C_b(X)\to \R$ is  convex,~$Q(V)\ge0$ for any~$V\in C_+(X)$, and~$Q(C)=C$ for any~$C\in\R$. Moreover,~$Q$ is 1-Lipschitz. Indeed, for any $V_1,V_2\in C_b(X)$ and $\theta\in \Theta$, we have
$$
\frac{1}{r_\theta}\log\E_\theta 
\exp\bigl(r_\theta\lag V_1, \zeta_\theta\rag\bigr) \le \|V_1-V_2\|_\infty+\frac{1}{r_\theta}\log\E_\theta 
\exp\bigl(r_\theta\lag V_2, \zeta_\theta\rag\bigr),
$$which implies that 
$$
Q(V_1)\le \|V_1-V_2\|_\infty+Q(V_2).
$$By symmetry    we get
$$
|Q(V_1)-Q(V_2)|\le \|V_1-V_2\|_\infty.
$$

The {\it Legendre transform\/} of~$Q$ is given  by 
\begin{equation} \label{2.2}
I(\sigma)=\begin{cases} \sup_{V\in C_b(X)}\bigl(\lag V, \sigma\rag-Q(V)\bigr)  & \text{for $\sigma \in {\cal P}(X)$},  \\ +\infty  & \text{for   $\sigma \in \MM(X)\setminus {\cal P}(X)$}  \end{cases}
\ee
  (see Lemma 2.2 in \cite{BD99}). Then~$I$ is  convex  and  lower semicontinuous function, and 
$$
Q(V)= \sup_{\sigma\in\ppp(X)} \bigl(\lag V, \sigma\rag-I(\sigma)\bigr).
$$A
  measure~$\sigma_V\in\ppp(X)$ is said to be an {\it equilibrium state\/} for~$V$ if
$$
Q(V)= \lag V,\sigma_V\rag-I(\sigma_V).
$$
 We shall denote by~$\VV$ the set of functions~$V\in C_b(X)$  admitting a unique equilibrium state  $\sigma_V$ and for which  the following limit exists
 \begin{equation}\label{2.1a}
Q(V)=\lim_{\theta\in\Theta} \frac{1}{r_\theta}\log\E_\theta 
\exp\bigl(r_\theta\lag V, \zeta_\theta\rag\bigr).
\end{equation}   We have the following version of
Theorem~2.1 in~\cite{kifer-1990} and    Theorem~3.3 in~\cite{JNPS-2014}. 
\begin{theorem}\label{T:2.3}
Suppose that  there is a function $\varPhi : X  \to [0,+\ty]$   whose level sets $ \{u \in X : \varPhi(u)\le a\}$ are   compact for all $a\ge0$  and 
\begin{equation}\label{2.4}
\E_\theta\exp\bigl(r_\theta \lag \varPhi,\zeta_\theta\rag \bigr)\le 
Ce^{c r_\theta}\quad \text{for $\theta\in \Theta$},
\end{equation} for some positive constants  $C$ and $c$.
 Then $I$ defined by~\eqref{2.2} is  a good rate function,   for any closed set $F\subset\ppp(X) $, 
\be 
\limsup_{\theta\in\Theta} \frac{1}{r_\theta}\log\pP_\theta\{\zeta_\theta\in   F\}\le - I (  F),\label{2.6U}
\ee
 and for any open set   $G\subset\ppp(X) $,
 \be
\liminf_{  \theta\in\Theta} \frac1{r_\theta}\log\pP_\theta\{\zeta_\theta\in  G \}\ge-I (\W \cap   G ),   \label{2.6L}
\ee where $\W := \{\sigma_V: V\in \VV\}$ and $I(\Gamma):=\inf_{\sigma\in \Gamma} I (\sigma)$, $\Gamma\subset \ppp(X)$.
\end{theorem}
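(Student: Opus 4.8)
The plan is to follow the two classical halves of Kifer's argument separately, using the exponential tightness hypothesis \eqref{2.4} to convert the convexity/Legendre-transform bookkeeping into genuine large-deviations bounds. First I would record the standard consequences of \eqref{2.4}: since $\varPhi$ has compact level sets, for each $a\ge0$ the set $K_a:=\{u:\varPhi(u)\le a\}$ is compact, and the sets $\mathcal K_b:=\{\sigma\in\ppp(X):\langle\varPhi,\sigma\rangle\le b\}$ are compact in $\ppp(X)$ by Prokhorov's theorem (tightness follows from Chebyshev applied to $\langle\varPhi,\sigma\rangle$; closedness from lower semicontinuity of $\sigma\mapsto\langle\varPhi,\sigma\rangle$). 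A Chebyshev estimate together with \eqref{2.4} gives $\pP_\theta\{\zeta_\theta\notin\mathcal K_b\}\le Ce^{(c-b)r_\theta}$, i.e. an exponential tightness bound at the level of occupation measures, which is exactly what promotes weak-topology upper bounds on compacts to upper bounds on all closed sets.

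\textbf{Upper bound \eqref{2.6U}.} This half does not use the set $\VV$ at all and is essentially the Kifer/Jak\v{s}i\'c--Nersesyan--Pillet--Shirikyan argument verbatim. I would first prove the bound for compact $F$: for $\sigma\in F$ and $\eps>0$ one picks $V\in C_b(X)$ with $\langle V,\sigma\rangle-Q(V)>I(\sigma)-\eps$ (possible by the very definition \eqref{2.2} of $I$ as a Legendre transform); by weak continuity of $\tau\mapsto\langle V,\tau\rangle$ there is a weak neighbourhood $G_\sigma$ of $\sigma$ on which $\langle V,\tau\rangle>\langle V,\sigma\rangle-\eps$, and then an exponential Chebyshev inequality
$$
\pP_\theta\{\zeta_\theta\in G_\sigma\}\le e^{-r_\theta(\langle V,\sigma\rangle-\eps)}\,\E_\theta\exp(r_\theta\langle V,\zeta_\theta\rangle)
$$
combined with $\limsup_\theta \frac1{r_\theta}\log\E_\theta\exp(r_\theta\langle V,\zeta_\theta\rangle)=Q(V)$ yields $\limsup_\theta\frac1{r_\theta}\log\pP_\theta\{\zeta_\theta\in G_\sigma\}\le -I(\sigma)+2\eps$. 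A finite subcover of $F$ and the standard "$\log$ of a finite sum" estimate give $\limsup_\theta\frac1{r_\theta}\log\pP_\theta\{\zeta_\theta\in F\}\le -I(F)+2\eps$, hence (letting $\eps\to0$) the compact-set bound. To pass to an arbitrary closed $F$, write $F\subset(F\cap\mathcal K_b)\cup\mathcal K_b^c$, apply the compact bound to the compact set $F\cap\mathcal K_b$ and the exponential tightness estimate to $\mathcal K_b^c$, and let $b\to\infty$, using that $I$ is a good rate function so $I(F\cap\mathcal K_b)\downarrow I(F)$ — this last monotonicity is where goodness of $I$ (itself a by-product of the same tightness estimate) is invoked.

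\textbf{Lower bound \eqref{2.6L}.} Here is where the local flavour enters and where I expect the only real subtlety. Fix an open $G$ and a measure $\sigma\in\W\cap G$, so $\sigma=\sigma_V$ for some $V\in\VV$; I must show $\liminf_\theta\frac1{r_\theta}\log\pP_\theta\{\zeta_\theta\in G\}\ge -I(\sigma_V)$. The mechanism is a tilting/change-of-measure argument: introduce the tilted family with density proportional to $\exp(r_\theta\langle V,\zeta_\theta\rangle)$, whose normalisation has exponential rate $Q(V)$ by \eqref{2.1a} (this is exactly where membership in $\VV$ — the \emph{existence of the limit}, not just the limsup — is used). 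One shows, using \eqref{2.4} to get exponential tightness for the tilted family as well, that the tilted occupation measures converge (in probability, along the directed set) to the unique equilibrium state $\sigma_V$; uniqueness of the equilibrium state is precisely what forces the limit and rules out the tilted law spreading its mass over several minimisers. Consequently the tilted probability of the open set $G$ tends to $1$, and "untilting" via Jensen's inequality gives
$$
\liminf_\theta\frac1{r_\theta}\log\pP_\theta\{\zeta_\theta\in G\}\ge -Q(V)+\langle V,\sigma_V\rangle = -I(\sigma_V),
$$
the last equality being the definition of $\sigma_V$ as an equilibrium state. Taking the infimum over $\sigma_V\in\W\cap G$ yields \eqref{2.6L}. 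The hard part is the convergence of the tilted measures to $\sigma_V$: one argues by contradiction, extracting along the directed set a tilted-limit measure $\sigma_*\neq\sigma_V$ lying (by exponential tightness) in some compact set, showing via the upper-bound machinery applied under the tilt that any such weak limit must be an equilibrium state for $V$, and then invoking uniqueness. Care is needed because $\Theta$ is only a directed set, so "subsequences" must be replaced by subnets and the weak-convergence arguments phrased accordingly; this is routine but must be done cleanly. I would then remark that the whole argument goes through for directed sets exactly as in \cite{JNPS-2014}, and refer there for the parts that are unchanged, presenting in full only the modifications caused by restricting to $\VV$.
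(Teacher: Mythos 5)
Your proposal is correct in substance, and the upper-bound half (compact sets via the exponential Chebyshev inequality and a finite cover, then closed sets via the exponential tightness coming from \eqref{2.4}) is exactly what the paper does, except that the paper simply cites Steps 1--2 of the proof of Theorem~3.3 in \cite{JNPS-2014} together with the remark, based on \cite{Ac85}, that defining $Q$ by the limsup in \eqref{2.1} suffices. For the lower bound, however, you take a genuinely different route. The paper follows Kifer: it projects $\zeta_\theta$ onto the finite-dimensional vectors $\zeta_\theta^n=(\langle V_1,\zeta_\theta\rangle,\dots,\langle V_n,\zeta_\theta\rangle)$, proves a local finite-dimensional lower bound (Proposition~\ref{L:2.4}) whose proof -- resting on the uniqueness of the equilibrium state and the existence of limit \eqref{2.1a} for $V$ in the span of $V_1,\dots,V_n$ -- is delegated to Proposition~3.4 of \cite{JNPS-2014}, and then lifts the bound back to $\ppp(X)$ using exponential tightness and a metric for the weak topology on a compact set containing the near-minimiser $\sigma_{V}$. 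You instead tilt directly in $\ppp(X)$ by $\exp(r_\theta\langle V,\zeta_\theta\rangle)$, use \eqref{2.1a} to identify the exponential rate of the normalisation (correctly locating where membership in $\VV$ enters), transfer exponential tightness to the tilted family via the boundedness of $V$, deduce from the tilted upper bound that the tilted occupation measures concentrate at the unique zero $\sigma_V$ of the good rate function $I-\langle V,\cdot\rangle+Q(V)$, and then untilt on a small weak ball inside $G$. This is a legitimate alternative: it relies on exactly the same two features of $V\in\VV$ (existence of the limit and uniqueness of the equilibrium state), and it avoids the finite-dimensional detour and the metrisation-of-$G$ bookkeeping; the price is that you must write out in $\ppp(X)$ the tilted upper-bound machinery (tilted exponential tightness, positivity of the infimum of the tilted rate function over $G^c$ via goodness and lower semicontinuity, and subnet arguments over the directed set $\Theta$), all of which the paper inherits for free from \cite{JNPS-2014} and \cite{kifer-1990}.

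One slip to correct: in your displayed untilting inequality the exponent should be $Q(V)-\langle V,\sigma_V\rangle$, not $-Q(V)+\langle V,\sigma_V\rangle$; by the definition of the equilibrium state one has $Q(V)-\langle V,\sigma_V\rangle=-I(\sigma_V)$, whereas the expression you wrote equals $+I(\sigma_V)$. The surrounding argument makes clear this is only a sign typo, not a gap.
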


 \begin{proof}  The fact that $I$ is a good rate function is shown  in Step 1  of the proof of Theorem~3.3 in~\cite{JNPS-2014}.   In Step 2 of the same proof,  the upper bound~\eqref{2.6U} is established,  under the condition that  the limit $Q(V)$ in~\eqref{2.1a} exists for any $V\in C_b(X)$. The latter condition can be removed, using literally the same proof, if one defines    $Q(V)$   by \eqref{2.1} for any $V\in C_b(X)$ (see Theorem~2.1 in~\cite{Ac85}). 

\smallskip
    To prove the lower bound,  
following  the ideas of~\cite{kifer-1990}, for   any integer~$n\ge1$ and any functions $V_1, \ldots, V_n\in  C_b(X)$, we 
  define an  auxiliary family    of finite-dimensional   random variables    $\zeta_\theta^n:= f_n(\zeta_\theta)$, where $f_n: \ppp(X)\to \R^n$ is given by      
$$
f_n(\mu):=\bigl(\lag V_1,\mu\rag, \ldots, \lag V_n,\mu\rag\bigr).
$$ Let us set 
$$
\W_n:=\{\sigma_V: V\in \VV\cap \textup{span}\{V_1, \ldots, V_n\}\}.
$$
 The following result is a local version   of Lemma~2.1 in~\cite{kifer-1990} and Proposition~3.4 in~\cite{JNPS-2014}; its proof is   sketched  at the end of this section.

\begin{proposition} \label{L:2.4} 
Assume that the hypotheses of Theorem~\ref{T:2.3} are satisfied and set $J_n(\Gamma)=\inf_{\sigma\in f_n^{-1}(\Gamma)} I(\sigma), \Gamma \subset \R^n$. Then 
  for any closed set $M\subset\R^n $ and open set $U\subset \R^n $, we have 
\begin{align} 
   \limsup_{\theta\in\Theta}\frac1{r_\theta}\log\pP\{ \zeta_\theta^n \in   M\}&\le -J_n( M  ),\label{9.01}\\
 \liminf_{\theta\in\Theta} \frac1{r_\theta}\log\pP\{  \zeta_\theta^n \in   U\} &\ge -J_n(f_n(\W_n)\cap   U ).\label{9.32}
\end{align}
\end{proposition}To derive \eqref{2.6L} from Proposition~\ref{L:2.4}, we follow the arguments of Step~4 of the proof of Theorem~3.3 in~\cite{JNPS-2014}.
The case $I(\W\cap G)=+\ty$ is trivial, so we  assume that $I(\W\cap G)<+\ty$. Then for  any $\es>0$,   there is $\nu_\es\in \W\cap G$ such that 
\begin{equation}\label{2.20}
I(\nu_\es)\le I(\W\cap G)+\es,
\end{equation} and there is a function      $V_1\in \VV$ such that    $\nu_\es=\sigma_{V_1}$. By Lemma~3.2 in~\cite{JNPS-2014}, the family $\{\zeta_\theta\}$ is exponentially tight, hence there is a compact set
 $\KK \subset\ppp(X)$ such that $\nu_\es\in \KK $ and 
\begin{equation}\label{zzerfs}
\limsup_{\theta\in \Theta}\frac{1}{r_\theta}\log \pP\{\zeta_\theta \in \KK ^c\}\le -(I(\W\cap G)+1+\es).
\end{equation}
We  choose functions $V_k\in C_b(X), k\ge2$,   $\|V_k\|_\infty=1$ such that  
$$
d(\mu,\nu):=\sum_{k=1}^\ty 2^{-k}|\lag V_k,\mu\rag-\lag V_k,\nu\rag|
$$ defines a metric   on~$\KK $ compatible with the weak topology. 
 As~$G$ is open, there are~$\delta>0$ and~$n\ge1$ such that if 
$$
\sum_{k=1}^n2^{-k}|\lag V_k,\nu\rag-\lag V_k,\nu_\es\rag|<\delta
$$ for some    $\nu\in\KK $, then  
  $\nu\in G$. Let  $x_\es:=f_n(\nu_\es)$, and denote by $\mathring{B}_{\R^n}(x_\es,\delta)$  the open ball in $\R^n$ of radius $\delta > 0$ centered at $x_\es$, with respect to the norm 
  $$
\|x\|_n:=\sum_{k=1}^n 2^{-k}|x_k|, \quad x=(x_1, \ldots, x_n).
$$
  Then we have   $f_n^{-1}\bigl(\mathring{B}_{\R^n}(x_\es,\delta)\bigl)\cap\KK \subset G$, hence \begin{align*} 
\pP\{\zeta_\theta\in  G\}&\ge\pP\{\zeta_\theta\in  G\cap \KK \}
\ge \pP\bigl\{\zeta_\theta\in  f_n^{-1}\bigl(\mathring{B}_{\R^n}(x_\es,\delta)\bigl)\cap\KK \bigr\}\\ 
&=\pP\{\zeta_\theta^n\in  \mathring{B}_{\R^n}(x_\es,\delta)\}
-\IP\{\zeta_\theta\in \KK ^c\}.
\end{align*}
Using the inequality  
$$\log(u-v)\ge \log u-\log 2, \q 0<v\le u/2
$$ and inequalities~\eqref{9.32}-\eqref{zzerfs},   we obtain   
\begin{align*}
\liminf_{\theta\in\Theta}  \frac1{r_\theta}\log\pP\{\zeta_\theta\in  G\}
&\ge \liminf_{\theta\in\Theta} \frac1{r_\theta}
\bigl(\log\pP\{\zeta_\theta^n\in  \mathring{B}_{\R^n}(x_\es,\delta)\}-\log 2\bigr)\\
&\ge -J_n(f_n(\W_n) \cap\mathring{B}_{\R^n}(x_\es,\delta))\ge -I_n(x_\es)\\
&\ge -I(\nu_\es) \ge -I(\W\cap G)-\es,
\end{align*} 
which proves~\eqref{2.6L}.  
\end{proof}

\begin{proof}[Sketch of the proof of Proposition~\ref{L:2.4}]
Inequality~\eqref{9.01} follows  from~\eqref{2.6U}.  To show \eqref{9.32}, for any $\beta=(\beta_1, \ldots, \beta_n)\in \R^n$ and $\alpha=(\alpha_1, \ldots, \alpha_n)\in \R^n$,  we set $V_\beta:=\sum_{j=1}^n\beta_jV_j$, $Q_n(\beta):=Q(V_\beta)$, and~$I_n(\alpha):=
\inf_{\sigma \in f_n^{-1}(\alpha)} I(\sigma)$.      One can verify that 
\begin{align*}
Q_n(\beta)&=\sup_{\alpha\in\R^n} \Bigl(\sum_{j=1}^n\beta_j\alpha_j-I_n(\alpha)\Bigr),\\
J_n(U)&=\inf_{\alpha\in  U}I_n(\alpha).
\end{align*}  
  Assume that   $J_n(f_n(\W_n)\cap U)<+\ty $, and  for any~$\es>0$,  choose~$\alpha_\es\in f_n(\W_n)\cap U$ such that 
$$I_n(\alpha_\es)<J_n(f_n(\W_n)\cap U)+\es.$$  Then
$\alpha_\es=f_n(\sigma_{V_{\beta_\es}})$ for some~$\beta_\es\in \R^n$   such that $V_{\beta_\es}\in \VV$. 
 It is easy to verify that  the following equality holds
$$
Q_n(\beta_\es)=\sum_{j=1}^n\beta_{\es j}\alpha_{\es j}-I_n(\alpha_\es).
$$
Literally repeating the proof of Proposition~3.4 in~\cite{JNPS-2014} (starting from equality~(3.16)) and using   the   
      uniqueness of the equilibrium state for $V=V_{\beta_\es}$ and the existence of limit~\eqref{2.1a},  one obtains 
$$
-J_n(f_n(\W_n)\cap U )-\es\le-I_n(\alpha_\es) \le\liminf_{\theta\in\Theta} \frac1{r_\theta}\log\pP\{  \zeta_\theta^n \in U  \}
$$for any $\es>0$. This implies \eqref{9.32}.
\end{proof}

\subsection{Large-time asymptotics for generalised Markov semigroups}
\label{S:5.2}

In this section, we give   a  continuous-time version of   Theorem~4.1  in~\cite{JNPS-2014}  with some modifications,  due to the fact that    the  generalised Markov family associated with the  stochastic NLW equation does not have a regularising property.   See also~\cite{KS-mpag2001,LS-2006, JNPS-2012} for some related results.  

 We start by recalling some terminology from \cite{JNPS-2014}.
 \begin{definition}\label{D:5.2} Let~$X$ be a Polish space.
 We shall say that $\{P_t(\uu,\cdot),\uu\in X, t\ge0\} $ is a {\it generalised Markov family of     transition kernels} if the following two  properties are satisfied. 
\begin{description}
\item[Feller property.] 
For any $t\ge0$,  the function $\uu\mapsto P_t(\uu,\cdot)$ is continuous from~$X$ to~$\MM_+(X)$   and does not vanish.
\item[Kolmogorov--Chapman relation.]  For any $t,s\ge0, \uu\in X$, and Borel set~$\Gamma\subset X$,  the following  relation  holds
$$
P_{t+s}(\uu,\Gamma)=\int_X P_s(\vv,\Gamma) P_t(\uu,\ddd \vv).
$$
\end{description} 
\end{definition}
To any such family we associate two semigroups by the following relations:
\begin{align*}
&\PPPP_t:C_b(X)\to C_b(X),  \quad\,\,\,\quad \quad \PPPP_t \psi(\uu)=\int_X \psi(\vv) P_t(\uu,\ddd \vv),\\
&\PPPP_t^*:\MM_+(X)\to \MM_+(X), \quad \quad \PPPP_t^* \mu(\Gamma)=\int_X  P_t(\vv,\Gamma) \mu(\ddd \vv),\quad t\ge0.
\end{align*}  
  For a   measurable function ${\wwww}:X\to[1,+\infty]$ and a family $\CC\subset C_b(X)$, we denote by~$\CC^\wwww$ the set of functions $\psi\in L_\wwww^\infty(X)$ that can be approximated  with respect to~$\|\cdot\|_{L_\wwww^\infty}$   by finite linear combinations of functions from~$\CC$.   We shall say that a family $\CC\subset C_b(X)$ is   {\it determining\/} if for any~$\mu,\nu\in\MM_+(X)$ satisfying   $\lag \psi,\mu\rag=\lag \psi,\nu\rag$ for all~$\psi\in\CC$, we have $\mu=\nu$. Finally, a family of functions~$\psi_t: X\to\R$ is   {\it uniformly equicontinuous\/} on a subset~$K\subset X$ if for any~$\es>0$ there is~$\delta>0$ such that $|\psi_t(\uu)-\psi_t(\vv)|<\es$ for any~$\uu\in K$, $\vv\in B_X(\uu,\delta)\cap K$, and~$t\ge1$. We have the following version   of  Theorem~4.1 in~\cite{JNPS-2014}.
\begin{theorem} \label{T:5.3}
Let~$\{P_t(\uu,\cdot),\uu\in X, t\ge0\} $ be  a generalised Markov family of     transition kernels  satisfying  the following four properties. 
\begin{description}
\item[Growth conditions.] There is   an increasing  sequence~$\{X_R\}_{R=1}^\infty$ of compact subsets of~$X$ such that~$X_\infty:=\cup_{R=1}^\ty X_R$ is dense in~$X$.  The measures $P_{t}(\uu,\cdot)$ are concentrated on~$X_\infty$ for any $\uu\in X_\ty$ and  $t>0$, and    there is a measurable function $\wwww:X\to[1,+\infty]$ and an integer~$R_0\ge1$ such that\,\footnote{The expression $(\PPPP_t\wwww)(\uu)$ is understood as an integral of a positive function~$\wwww$ against a positive measure $P_t(\uu,\cdot)$.}
\begin{align}
&\sup_{t\ge0}
\frac{\|\PPPP_t\wwww\|_{L_\wwww^\infty}}{\|\PPPP_t{\mathbf1}\|_{R_0}}<\infty,\label{5.8}\\
&\sup_{t\in [0,1]} \|\PPPP_t{\mathbf1}\|_{\infty}<\ty,\label{5.9}
\end{align}
where   $\|\cdot\|_R$ and $\|\cdot\|_\ty$ denote  the~$L^\infty$ norm on~$X_R$ and $X$, respectively, and we set $\infty/\infty=0$.

\item[Time-continuity.] 
For any  function $g\in L^\ty_\wwww(X_\ty)$ whose    restriction to~$X_R$ belongs to $C(X_R)$ and any $\uu\in X_\ty$,
 the function~$t\mapsto\PPPP_tg(\uu) $ is continuous from~$\R_+$ to $\R$.

\item[Uniform irreducibility.] 
For sufficiently large $\rho\ge1$, any   $R\ge 1$ and   $r>0$, there are positive numbers  $l=l(\rho,r,R)$ and~$p=p(\rho,r)$  such that
$$
P_l(\uu,B_{X}(\hat \uu,r))\ge p\quad\mbox{for all~$\uu\in X_R ,\,\hat \uu\in X_\rho$}. 
$$

\item[Uniform Feller property.]
There is a number~$R_0\ge1$ and a
 determining family~$\CC\subset  C_b(X)$      such that~${\mathbf1}\in\CC$ and the family $\{\|\PPPP_t{\mathbf1}\|_R^{-1}\PPPP_t\psi,t\ge1\}$ is uniformly equicontinuous on~$X_R$ for any~$\psi\in \CC$ and~$R\ge R_0$. 
\end{description}
Then~ for any $t>0$, there is at most one measure $\mu_t\in\ppp_\wwww(X)$ such that  $\mu_t(X_\ty)=1$ and
\begin{equation}\label{5.10}
\PPPP^*_t \mu_t=\la(t)\mu_t \quad \text{for some $\la(t)\in \R$} 
\end{equation}satisfying the following condition: 
\begin{equation} \label{5.11}
\|\PPPP_t\wwww\|_R\int_{X\setminus X_R}\wwww\dd\mu_t\to0
\quad\mbox{as $R\to\infty$}.
\end{equation}
Moreover, if such a measure~$\mu_t$ exists for all $t>0$, then it is independent of~$t$ (we set $\mu:=\mu_t$),  the corresponding eigenvalue is of the form~$\lambda(t)=\la^t$,   $\la>0$,~$\supp \mu=X$, 
  and     there is a  non-negative function~$h\in L^\ty_\wwww(X_\ty)$  such that  $\lag h,\mu\rag=1$,
  \be\label{eigenf}
(\PPPP_t h)(\uu)=\lambda^t h(\uu)\quad\mbox{for $\uu\in X_\ty$,  $t>0$},
\ee  the restriction of~$h$ to~$X_R$ belongs to $C_+(X_R)$,  
  and for any $\psi\in \CC^\wwww$ and $R\ge1$, we have 
\begin{equation}
\lambda^{-t}\PPPP_t \psi\to\lag \psi,\mu\rag h
\quad\mbox{in~$C(X_R)\cap L^1(X,\mu)$ as~$t\to\infty$}. \label{5.12}
\end{equation}
Finally, if a Borel set $B\subset X$ is such that
\begin{align} 
 \sup_{\uu\in B}\biggl(\int_{X\setminus X_R}\wwww(\vv)\,P_s(\uu,\ddd \vv)\biggr) &\to0
\quad\mbox{as $R\to\infty$} \label{5.13}
\end{align} for some   $s>0$, 
then for any $\psi\in \CC^\wwww$, we have
\begin{equation}
\lambda^{-t}\PPPP_t \psi\to\lag \psi,\mu\rag h
\quad\mbox{in~$L^\ty(B) $ as~$t\to\infty$}. \label{5.14}
\end{equation}
\end{theorem}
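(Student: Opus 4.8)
The plan is to reduce the statement to its already established discrete-time counterpart, Theorem~4.1 in \cite{JNPS-2014}, by sampling the continuous-time family at multiples of a fixed time step, and then to lift the discrete conclusions back to continuous time using the Kolmogorov--Chapman relation, the time-continuity property, and the commutativity of the semigroups $\{\PPPP_t\}$. First I would fix $\tau>0$ and pass to the discrete-time generalised Markov family $\{P_{n\tau}(\uu,\cdot)\}_{n\ge0}$, checking that it inherits the four hypotheses of Theorem~4.1 in \cite{JNPS-2014}: the growth conditions \eqref{5.8}--\eqref{5.9} and the uniform Feller property are obtained by restricting $t$ to $\tau\N$ (resp.\ $\tau\N\cap[1,\infty)$), and the discrete uniform irreducibility follows from its continuous-time version together with the Feller property and Kolmogorov--Chapman (reach a ball at some time $l$, then round $l$ up to a multiple of $\tau$ at the cost of enlarging the radius, using $P_s(\vv,\cdot)\to\delta_\vv$ as $s\downarrow0$, which is guaranteed by time-continuity). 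Applying Theorem~4.1 of \cite{JNPS-2014} then yields, for each $\tau$: the uniqueness of an eigenvector $\mu_\tau\in\ppp_\wwww(X)$ concentrated on $X_\infty$ with $\PPPP_\tau^*\mu_\tau=\Lambda_\tau\mu_\tau$ satisfying \eqref{5.11}; and, when such a $\mu_\tau$ exists, $\Lambda_\tau>0$, $\supp\mu_\tau=X$, a non-negative eigenfunction $h_\tau\in L^\infty_\wwww(X_\infty)$ with $h_\tau|_{X_R}\in C_+(X_R)$ and $\langle h_\tau,\mu_\tau\rangle=1$, and the primal and dual convergences $\Lambda_\tau^{-n}\PPPP_{n\tau}\psi\to\langle\psi,\mu_\tau\rangle h_\tau$ in $C(X_R)\cap L^1(X,\mu_\tau)$ and $\Lambda_\tau^{-n}\PPPP_{n\tau}^*\nu\to\langle h_\tau,\nu\rangle\mu_\tau$. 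Taking $\tau=t$ already gives the uniqueness assertion of the theorem.

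Next I would show that, under the existence hypothesis, $\mu_t$ does not depend on $t$ and $\lambda(t)=\lambda^t$. The key observation is that $\PPPP_s^*$ and $\PPPP_\tau^*$ commute, so $\PPPP_s^*\mu_\tau$ is again an eigenvector of $\PPPP_\tau^*$ with eigenvalue $\Lambda_\tau$; it is concentrated on $X_\infty$, and a direct estimate using \eqref{5.8} and \eqref{5.11} for $\mu_\tau$ shows that its normalisation satisfies \eqref{5.10}--\eqref{5.11} at time $\tau$. By the uniqueness just proved, $\PPPP_s^*\mu_\tau=c(s,\tau)\,\mu_\tau$ with $c(s,\tau)=\langle\PPPP_s\mathbf1,\mu_\tau\rangle>0$; taking $s=\tau'$ shows $\mu_\tau$ is the eigenvector at time $\tau'$ as well, hence $\mu:=\mu_t$ is independent of $t$. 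Writing $\lambda(t):=c(t,t)$, the identity $\PPPP_{t+s}^*=\PPPP_t^*\PPPP_s^*$ gives $\lambda(t+s)=\lambda(t)\lambda(s)$, the map $t\mapsto\lambda(t)=\langle\PPPP_t\mathbf1,\mu\rangle$ is continuous by the time-continuity property and dominated convergence (dominating function $C\wwww$ by \eqref{5.8}, $\mu\in\ppp_\wwww$), and $\lambda(t)>0$; therefore $\lambda(t)=\lambda^t$ with $\lambda:=\lambda(1)>0$. Finally, uniform irreducibility gives $\mu(B_X(\hat\uu,r))\ge\lambda^{-l}p\,\mu(X_R)>0$ for every ball centred at a point of $X_\infty$, and density of $X_\infty$ forces $\supp\mu=X$.

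For the eigenfunction and the convergence \eqref{5.12}, I would set $h:=h_1$. For fixed $t$, commutativity and $\PPPP_1 h=\lambda h$ make $\lambda^{-t}\PPPP_t h$ again a $\PPPP_1$-eigenfunction with eigenvalue $\lambda$, lying in $L^\infty_\wwww(X_\infty)$ with continuous restrictions to the $X_R$ and with $\langle\lambda^{-t}\PPPP_t h,\mu\rangle=\lambda^{-t}\langle h,\PPPP_t^*\mu\rangle=1$; pairing it against $\lambda^{-n}\PPPP_n^*\nu$ and letting $n\to\infty$ (dual convergence) identifies it with $h$ on $X_\infty$, so $\PPPP_t h=\lambda^t h$, which is \eqref{eigenf}. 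To prove \eqref{5.12}, fix $\psi\in\CC^\wwww$, set $\phi:=\psi-\langle\psi,\mu\rangle h$, and use $\PPPP_t h=\lambda^t h$ to write $\lambda^{-t}\PPPP_t\psi-\langle\psi,\mu\rangle h=\lambda^{-t}\PPPP_t\phi$; with $t=n+s$, $n=\lfloor t\rfloor$, $s\in[0,1)$, this equals $\lambda^{-s}\PPPP_s g_n$ where $g_n:=\lambda^{-n}\PPPP_n\phi\to0$ in $C(X_R)\cap L^1(X,\mu)$ and $\sup_n\|g_n\|_{L^\infty_\wwww}<\infty$. The $L^1(\mu)$ part is immediate since $\PPPP_s^*\mu=\lambda^s\mu$ gives $\|\lambda^{-s}\PPPP_s g_n\|_{L^1(\mu)}\le\|g_n\|_{L^1(\mu)}$; for $C(X_R)$, split $\PPPP_s g_n(\uu)$ over $X_{R'}$ and $X\setminus X_{R'}$, bounding the first piece by $\|g_n\|_{C(X_{R'})}\PPPP_s\mathbf1(\uu)$ (with $\PPPP_s\mathbf1\le C\wwww$ bounded on $X_R$ uniformly in $s\in[0,1]$, by \eqref{5.8}) and the second by $\|g_n\|_{L^\infty_\wwww}\sup_{s\in[0,1]}\int_{X\setminus X_{R'}}\wwww\,dP_s(\uu,\cdot)$, which tends to $0$ as $R'\to\infty$. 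Then \eqref{5.14} follows from the same reduction with $s$ chosen as the time appearing in \eqref{5.13}: for $\uu\in B$ the tail term is controlled directly by \eqref{5.13} (which also bounds $\PPPP_s\mathbf1$ on $B$), while $\|\lambda^{-(t-s)}\PPPP_{t-s}\phi\|_{X_{R'}}\to0$ by the already proved \eqref{5.12}.

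The hard part will be the discrete-to-continuous passage in the last step, namely the uniformity over the fractional times: one needs $\sup_{\uu\in X_R,\,s\in[0,1]}\int_{X\setminus X_{R'}}\wwww\,dP_s(\uu,\cdot)\to0$ as $R'\to\infty$, which is not one of the listed hypotheses and has to be extracted from the growth condition \eqref{5.8}, the time-continuity property, and a Dini-type compactness argument on $X_R$. Everything else — verifying the discrete hypotheses, the commutation arguments, and the identification $\lambda(t)=\lambda^t$ — is a routine adaptation of \cite{JNPS-2014}.
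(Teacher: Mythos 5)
Your overall strategy (sample the semigroup at a fixed step, invoke Theorem~4.1 of \cite{JNPS-2014} for the discrete family, then lift back to continuous time via Kolmogorov--Chapman and time-continuity) is the same as the paper's, but both places where you glue continuous time to the grid contain genuine gaps. First, your verification of the discrete uniform irreducibility by ``rounding $l$ up to a multiple of $\tau$'' using $P_s(\vv,\cdot)\to\delta_\vv$ as $s\downarrow0$ is not available from the hypotheses: time-continuity only gives, for each \emph{fixed} $\uu\in X_\infty$ and each fixed admissible test function, continuity of $t\mapsto\PPPP_t g(\uu)$; it yields no uniformity in the starting point, and after time $l$ the trajectory lies in $B_X(\hat\uu,r)\cap X_\infty$, which is not contained in any compact $X_R$, so no uniform lower bound on $P_s(\vv,B_X(\hat\uu,r'))$ over such $\vv$ and $s\in[0,\tau)$ can be extracted (the identity $P_0=\delta_\uu$ is not even among the assumptions). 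The paper does not verify grid-time irreducibility at all: it keeps the continuous-time irreducibility as stated and observes (footnote in its proof) that the proof of the discrete-time theorem can be repeated with this modified hypothesis.

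Second, the step you yourself flag as ``the hard part'' is indeed a gap that cannot be closed as proposed: the condition $\sup_{\uu\in X_R,\,s\in[0,1]}\int_{X\setminus X_{R'}}\wwww\,P_s(\uu,\ddd\vv)\to0$ as $R'\to\infty$ does not follow from \eqref{5.8}, \eqref{5.9} and time-continuity. These only give $\PPPP_s\wwww\le C\wwww$ for $s\in[0,1]$, which bounds the integral but gives no tail decay, and time-continuity cannot be applied to $\wwww\I_{X_{R'}}$ or $\wwww\I_{X\setminus X_{R'}}$, since $\wwww$ is merely measurable and indicators of $X_{R'}$ are not continuous on $X_R$; there is no monotone continuous family for a Dini argument. (The same missing uniform integrability is hidden in your ``direct estimate'' that the normalisation of $\PPPP_s^*\mu_\tau$ satisfies \eqref{5.11}.) The paper's treatment of non-grid times in \eqref{5.12} avoids any such condition: after normalising so that $\la=1$, the grid convergence with $\psi=\mathbf1$ together with \eqref{5.9} gives $\sup_{t\ge0}\|\PPPP_t\mathbf1\|_R<\infty$, so the uniform Feller property makes the family $\{\PPPP_t g,\ t\ge1\}$, $g=\psi-\lag\psi,\mu\rag h$, uniformly equicontinuous on $X_R$; moreover $t\mapsto\lag|\PPPP_t g|,\mu\rag$ is non-increasing because $\PPPP_t^*\mu=\mu$, and it tends to $0$ along the grid by \eqref{5.16}, hence for all $t\to\infty$, which combined with the equicontinuity yields convergence in $C(X_R)$ (this is \eqref{5.17}). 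You would need to replace your interpolation step by this monotonicity--equicontinuity argument (and, for the $t$-independence of $\mu_t$, the paper's route through rational times, Kolmogorov--Chapman and time-continuity) for the proof to close.
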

\begin{proof}[Sketch of the proof] {\it Step~1: Existence of eigenvectors $\mu$ and $h$.}
 For any~$t>0$,
the conditions of  Theorem~4.1 in~\cite{JNPS-2014} are  satisfied\footnote{Let us note that in Theorem~4.1 in~\cite{JNPS-2014} it is assumed that the measures~$P_{t}(\uu,\cdot)$ are concentrated on~$X_\infty$ for any $\uu\in X$. Here this  is replaced  by the 
   condition that the measures~$P_{t}(\uu,\cdot)$ and $\mu_t$ are concentrated on~$X_\infty$ for any $u\in X_\ty$.  The
   uniform irreducibility property is slightly different from the one assumed in~\cite{JNPS-2014}. Both  modifications  are due to the  lack of a   regularising property   for  the   stochastic NLW equation.  
          These changes   do not affect the proof given  in~\cite{JNPS-2014}, one only needs   to 
 replace inequality~(4.16) in the proof by    the inequality 
\be\label{modif}
\sup_{k\ge0}\|\PPPP_k\psi\|_{L_\wwww^\infty(X)}\le M_1\,\|\psi\|_{L_\wwww^\infty(X)}
\quad\mbox{for any $\psi\in L_\wwww^\infty(X)$},
\ee  and literally repeat all the arguments.  The  proof of \eqref{modif} is similar to the one of~(4.16). Under these modified  conditions, the 
concept of eigenfunction for $\PPPP_t$ is understood in a weaker 
sense; namely, relation \eqref{eigenf}   needs to hold only for $\uu\in X_\infty$.}  for the      discrete-time semigroup~$\{\tilde\PPPP_k=\PPPP_{tk}, k\ge1\}$ generated by~$\tilde P=P_t$.        So that theorem implies the existence of    at most one measure~$\mu_t\in\ppp_\wwww(X)$ satisfying~$\mu_t(X_\ty)=1$,~\eqref{5.10}, and~\eqref{5.11}.
 Moreover, if such a measure $\mu_t$ exists for any $t>0$,   it   follows from the Kolmogorov--Chapman relation that           $\mu_t=\mu_1=:\mu$ and $\la(t)=(\la(1))^t=:\la^t$ for any  $t$ in the set $\Q_+^*$ of  positive rational numbers, i.e.,    
\begin{equation}\label{5.15}
\PPPP^*_t \mu=\la^t\mu \quad \text{for }t\in \Q_+^*. 
\end{equation}      
 Using the time-continuity property and density,   we get  that   \eqref{5.15} holds   for any~$t>0$. So we have $\mu_t=\mu$ and $\la(t)=\la^t$ for any $t>0$, by uniqueness of the eigenvector.  

Theorem 4.1 in~\cite{JNPS-2014} also implies  that $\supp \mu= X, \la>0$, and          there is a  non-negative function $h_t\in L^\ty_\wwww(X_\ty)$  such that $\lag h_t,\mu\rag=1$,  the restriction of~$h_t$ to~$X_R$ belongs to $C_+(X_R)$,  and
\begin{align}
& (\PPPP_{t} h_t)(\uu)=\lambda^{t} h_t(\uu)\quad\mbox{for $\uu\in X_\ty$}, \label{sahmana1}\\
&\lambda^{-tk}\PPPP_{tk} \psi\to\lag \psi,\mu\rag h_t
\quad\mbox{in~$C(X_R)\cap L^1(X,\mu)$ as~$k\to\infty$} \label{sahmana2}
\end{align}
     for any $\psi\in \CC^\wwww, R\ge1$, and $ t>0$. Taking $\psi={\mathbf1}$ in \eqref{sahmana2}, we see that
      $h_t=h_1=:h$ for any~$t\in \Q_+^*$. The continuity of   the function~$t\mapsto\PPPP_th(\uu) $ and~\eqref{sahmana1} imply that   $h_t=h$ for any $t>0$ and 
      \begin{equation}
      \lambda^{-tk}\PPPP_{tk} \psi \to\lag \psi ,\mu\rag h
\quad\mbox{in~$C(X_R)\cap L^1(X,\mu)$ as~$k\to\infty$}. \label{5.16}
      \end{equation}

\medskip
  {\it Step~2: Proof of  \eqref{5.12}.}
 First let us prove \eqref{5.12} for any $\psi \in\CC$. 
Replacing~$P_t(\uu, \Gamma)$ by $\la^{-t}P_t(\uu, \Gamma)$, we may
assume that~$\la = 1$.  Taking~$\psi =\mathbf1$ and $t=1$ in \eqref{5.16}, we obtain    $ \sup_{k\ge0} \|\PPPP_{k} \mathbf1    \|_R<\ty$. So using~\eqref{5.9}, we  get $ \sup_{t\ge0} \|\PPPP_{t} \mathbf1    \|_R<\ty$.
 This implies that 
  $\{  \PPPP_t \psi, t\ge1\}$ is uniformly equicontinuous on $X_R$ for any  $R\ge R_0$. 
   Setting $g=\psi-\lag \psi,\mu\rag h$, we need to prove that $ \PPPP_tg\to0$ in~$C(X_R)$ for any~$R\ge1$. Since~$\{ \PPPP_tg,t\ge1\}$ is uniformly equicontinuous on~$X_R$, the required assertion will be established if we prove that 
\begin{equation} \label{5.17}
\bbar  \PPPP_tg \bbar_\mu:=\lag \bbar\PPPP_tg\bbar, \mu\rag \to0\quad\mbox{as~$t\to\infty$}.
\end{equation}
For any~$\varphi \in L^\ty_\wwww(X)$, we have
$$
\bbar \PPPP_t \varphi\bbar_\mu \le\langle \PPPP_t |\varphi|,\mu\rangle
=\langle|\varphi|,\mu\rangle=\bbar\varphi\bbar_\mu,
$$
thus~$\bbar  \PPPP_tg\bbar_\mu$ is a non-increasing function in $t$. By~\eqref{5.16}, we have   $\bbar  \PPPP_{tk}g\bbar_\mu\to 0$  as~$k\to\ty$. This  proves  \eqref{5.17}, hence also  \eqref{5.12} for any $\psi\in\CC$. 

 An easy approximation argument shows that   \eqref{5.12} holds for any $\psi\in\CC^\wwww$ (see Step 4 of the proof of Theorem 4.1 in~\cite{JNPS-2014}). Finally,
the proof of~\eqref{5.14} under condition \eqref{5.13} is exactly the same as in Step 7 of the proof of the discrete-time case.
 \end{proof}

 \subsection{Proofs of some auxiliary assertions}\label{S:6.2}

 \subsubsection*{The Foia\c{s}-Prodi estimate} 
Here we briefly recall an a priori   estimate established in Proposition~4.1 in~\cite{DM2014}.
Let $\uu_t=[u,\dot u]$ and~$\vv_t=[v,\dot v]$ be some flows of the equations
\begin{align}
\p_t^2 u+\gamma \p_t u-\de u+f(u)&=h(x)+\p_t\ph(t,x),\label{FP"IN"1}\\
\p_t^2 v+\gamma \p_t v-\de v+f(v)+{\mathsf P}_N[f(u)-f(v)]&=h(x)+\p_t\ph(t,x)\label{FP"IN"2},
\end{align}
       where $\varphi$ is a function belonging  to $L^2_{loc}(\rr_+,L^2(D))$. We recall that~${\mathsf P}_N$ stands for the orthogonal projection in $L^2(D)$ onto the vector span $H_N$ of the functions~$e_1,e_2,\ldots,e_N$ and $P_N $ is the projection in $\h$ onto~$\h_N:=H_N\times H_N$.
\begin{proposition}\label{4.13}
Assume that, for some non-negative numbers $s$  and $T$, we have $\uu,\vv\in C(s,s+T;\h)$. Then  
\be\label{FPE1}
|P_N(\vv_t-\uu_t)|_\h^2\le e^{-\alpha (t-s)}|\vv_s-\uu_s |_\h^2 \q \text{ for } s\leq t\leq s+T,
\ee
where $\al>0$ is the constant entering \ef{e40}.
If we suppose  that 
  the inequality holds
\be\label{4.17}
\int_s^t\|\g z\|^2\,d\tau\leq l+K(t-s) \q \text{ for } s\leq t\leq s+T
\ee
 for $z=u$ and $z=v$ and some positive numbers $K$  and $l$, then, for any $\es>0$, there is an integer $N_*=N_*(\es, K)\geq 1$   such that
\be \label{4.16}
|\vv_t-\uu_t |^2_\h\leq e^{-\al(t-s)+\es l}|\vv_s-\uu_s|^2_\h \q \text{ for } s\leq t\leq s+T
\ee   for all~$N\geq N_*$ and $ s\leq t\leq s+T$.
\end{proposition}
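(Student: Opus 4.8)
The plan is to prove the Foia\c{s}--Prodi estimate \eqref{FPE1}--\eqref{4.16} for the pair of equations \eqref{FP"IN"1}--\eqref{FP"IN"2} by deriving an energy inequality for the difference $\wwww_t := \vv_t - \uu_t$, where $\wwww_t = [w, \dt w]$ with $w = v - u$. Subtracting \eqref{FP"IN"1} from \eqref{FP"IN"2}, the noise and the external force $h$ cancel, and $w$ solves
\[
\p_t^2 w + \gamma\, \p_t w - \de w + \bigl(f(v)-f(u)\bigr) - {\mathsf P}_N\bigl(f(v)-f(u)\bigr) = 0, \qquad w|_{\partial D}=0.
\]
Write $g := f(v)-f(u) = (I-{\mathsf P}_N)\bigl(f(v)-f(u)\bigr)$ on the left after moving the projected part over, so effectively the forcing term acting on $w$ is $-(I-{\mathsf P}_N)g$, which lives in the high modes only. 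First I would establish \eqref{FPE1}: apply $P_N$ to the equation for $w$; since $P_N$ commutes with $\de$ and the zero-order term $(I-{\mathsf P}_N)g$ is annihilated by $P_N$ (its spatial part lies in $H_N^\perp$), the component $P_N \wwww_t$ satisfies the \emph{free} damped wave equation in the finite-dimensional space $\h_N$, for which the energy $|P_N\wwww_t|_\h^2$ (with the equivalent norm from \eqref{e40}) decays like $e^{-\alpha(t-s)}$ by the standard computation underlying the choice of $\al$ in \eqref{e40}. This gives \eqref{FPE1}.

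For \eqref{4.16} the plan is the usual Foia\c{s}--Prodi argument: multiply the equation for $w$ by $2(\dt w + \al w)$ (in $L^2$) and integrate over $D$. The quadratic terms produce, as in the derivation of the norm \eqref{e40}, a bound of the form
\[
\frac{\ddd}{\ddd t}|\wwww_t|_\h^2 + \al |\wwww_t|_\h^2 \le 2\Bigl|\bigl\langle (I-{\mathsf P}_N)\bigl(f(v)-f(u)\bigr),\, \dt w + \al w\bigr\rangle\Bigr|.
\]
On the high modes one has $\|(I-{\mathsf P}_N)\phi\| \le \lambda_{N+1}^{-1/2}\|\phi\|_1$ for any $\phi$, and more to the point $\|(I-{\mathsf P}_N)(-\de)^{-1/2}\psi\|\le \lambda_{N+1}^{-1/2}\|\psi\|$, so the right-hand side is controlled by $C\lambda_{N+1}^{-1/2}\|\nabla(f(v)-f(u))\|\,|\wwww_t|_\h$ or, more carefully, by exploiting that $f(v)-f(u) = \bigl(\int_0^1 f'(u+\theta w)\,\ddd\theta\bigr)w$ and the growth condition \eqref{1.8}, by
\[
C\lambda_{N+1}^{-\kappa}\bigl(1 + \|\nabla u\|^{2}+\|\nabla v\|^{2}\bigr)\,|\wwww_t|_\h^2
\]
for some $\kappa>0$ (here one uses Sobolev embeddings in $D\subset\R^3$ and $\rho<2$ exactly as in the moment estimates of Section~\ref{S:4}). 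Plugging this in yields
\[
\frac{\ddd}{\ddd t}|\wwww_t|_\h^2 \le \Bigl(-\al + C\lambda_{N+1}^{-\kappa}\bigl(1+\|\nabla u\|^2+\|\nabla v\|^2\bigr)\Bigr)|\wwww_t|_\h^2,
\]
and Gronwall's lemma gives
\[
|\wwww_t|_\h^2 \le |\wwww_s|_\h^2 \exp\!\Bigl(-\al(t-s) + C\lambda_{N+1}^{-\kappa}\!\int_s^t\!\bigl(1+\|\nabla u_\tau\|^2+\|\nabla v_\tau\|^2\bigr)\ddd\tau\Bigr).
\]
Now invoke hypothesis \eqref{4.17} for $z=u$ and $z=v$: the integral is bounded by $2l + (2K + (t-s))\cdot$const, so the exponent is at most $-\al(t-s) + C\lambda_{N+1}^{-\kappa}(2l + C'K(t-s))$. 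Given $\es>0$ (and $K$), choose $N_*$ so large that $C\lambda_{N+1}^{-\kappa}\le \es$ and simultaneously $C\lambda_{N+1}^{-\kappa}C'K \le \al/2$ (possible since $\lambda_N\to\infty$); absorbing the remaining $(t-s)$ term one obtains $|\wwww_t|_\h^2\le e^{-\al'(t-s)+\es l}|\wwww_s|_\h^2$. A harmless adjustment of constants (or redefining $\al$ as the constant of \eqref{e40} and tracking a slightly worse exponential rate, then noting the statement only claims decay at rate $\al$ up to the factor $e^{\es l}$) yields \eqref{4.16} precisely as stated.

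The main obstacle I expect is the careful estimate of the nonlinear term $(I-{\mathsf P}_N)(f(v)-f(u))$ paired against $\dt w+\al w$ in the $\h$-inner product: one must gain a negative power of $\lambda_{N+1}$ while keeping the dependence on $u,v$ only through $\|\nabla u\|,\|\nabla v\|$ (so that \eqref{4.17} can be applied) and not through higher norms. This requires choosing the right Sobolev exponents — using $\|(-\de)^{-1/2}(I-{\mathsf P}_N)\phi\|\le \lambda_{N+1}^{-1/2}\|\phi\|$ so that only the $L^2$-norm of $f(v)-f(u)$ enters, then bounding $\|f(v)-f(u)\|$ via $f(v)-f(u)=\int_0^1 f'(u+\theta w)\,\ddd\theta\cdot w$, $|f'(\cdot)|\lesssim |\cdot|^\rho+1$, Hölder, and the embedding $H^1\hookrightarrow L^6$ valid in dimension three — so that the whole term is $\lesssim \lambda_{N+1}^{-1/2}(1+\|u\|_1^\rho+\|v\|_1^\rho)\|w\|_1\,|\wwww_t|_\h$, and then Young's inequality with $\rho<2$ converts $\|w\|_1\,|\wwww_t|_\h$ into $|\wwww_t|_\h^2$ at the cost of the stated polynomial factor in $\|\nabla u\|,\|\nabla v\|$. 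Everything else is the standard damped-wave energy computation already used to introduce the norm \eqref{e40}, together with Gronwall's inequality.
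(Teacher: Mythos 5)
Your proof of \eqref{FPE1} coincides with the paper's: $P_N(\vv_t-\uu_t)$ solves the free linear damped wave equation, and the norm \eqref{e40} of its flow decays at rate $\al$. For \eqref{4.16}, however, the paper does not give a proof at all — it simply cites Proposition~4.1 of \cite{DM2014} — and your attempt to supply one breaks down exactly at the step you yourself flag as the main obstacle: the bound on $\langle (I-{\mathsf P}_N)(f(v)-f(u)),\,\dt w+\al w\rangle$, $w:=v-u$. Neither of the two mechanisms you propose for extracting a negative power of $\lambda_{N+1}$ is valid. First, the variant $C\lambda_{N+1}^{-1/2}\|\nabla(f(v)-f(u))\|\,|\vv_t-\uu_t|_\h$ is unusable because $\|\nabla(f(v)-f(u))\|_{L^2}$ is not controlled by $H^1$-data: the term $f'(v)\nabla w$ has $f'(v)\in L^{6/\rho}$ only (from \eqref{1.8} and $H^1\hookrightarrow L^6$) and $\nabla w\in L^2$, so the product lies merely in $L^r$ for some $r<2$. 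Second, the pairing through $\|(-\de)^{-1/2}(I-{\mathsf P}_N)\phi\|\le\lambda_{N+1}^{-1/2}\|\phi\|$ forces the dual factor to be $(-\de)^{1/2}(\dt w+\al w)$, i.e.\ it costs $\|\dt w+\al w\|_{1}$ (hence $\|\nabla\dt w\|$), which is not dominated by $|\vv_t-\uu_t|_\h$; and if you instead keep the pairing in $L^2$, the projection yields no gain from $\|f(v)-f(u)\|_{L^2}$ alone. Consequently the differential inequality with coefficient $-\al+C\lambda_{N+1}^{-\kappa}(1+\|\nabla u\|^2+\|\nabla v\|^2)$, on which your Gronwall argument rests, is not established.

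What is actually required — and this is the real content of the cited Proposition~4.1 in \cite{DM2014} — is a finer high-frequency estimate: for instance $\|(I-{\mathsf P}_N)g\|\le\lambda_{N+1}^{-s/2}\|g\|_{H^s}$ with some $s\in(0,1)$, combined with fractional product/composition estimates for $\|f(v)-f(u)\|_{H^s}$ that use the bound on $f''$ in \eqref{1.8} and keep the dependence on the solutions only through $\|\nabla u\|^2+\|\nabla v\|^2$ (so that hypothesis \eqref{4.17} applies), or an alternative splitting/integration-by-parts-in-time argument. One also needs a margin in the dissipation rate of the energy identity (decay strictly faster than $\al$, cf.\ the $3\al/2$ appearing in \eqref{0.9}) in order to absorb the $K(t-s)$ contribution and still conclude with the exact rate $\al$ in \eqref{4.16}; your "harmless adjustment of constants" glosses over this. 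The Gronwall endgame and the choice $N_*=N_*(\es,K)$ are fine once the nonlinear estimate is in hand, but as written that key estimate is missing, so the proposal does not yet prove \eqref{4.16}.
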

\bp  Estimate \eqref{4.16} is proved in Proposition~4.1 in~\cite{DM2014}. To prove \eqref{FPE1}, let us note that $\Zz=[z,\dot z]=P_N(\vv-\uu)$ is a solution of the linear equation
$$
\p_t^2 z+\gamma \p_t z-\de z=0.
$$ So we have 
$$|P_N(\vv_t-\uu_t)|_\h^2=|  \Zz_t|_\h^2\le e^{-\al(t-s)}|\Zz_s|_\h^2 \le e^{-\alpha (t-s)}|\vv_s-\uu_s |_\h^2.
$$
\ep

 \subsubsection*{Proof of Proposition \ref{P:JNPS}}\label{S:6.2.1}

{\it Step~1: Preliminaries}.
 We denote by  $ \SSSS_t^{V,F}$ the semigroup   defined by \eqref{6.67}, and  write~$ \SSSS_t^{V}$ instead of   $ \SSSS_t^{V,0}$ (i.e.,   $F=\mathbf0$). Let
$\D({\mathbb L_V})$   be the space of functions~$\psi \in C_b(\h^\sS)$ such that  
\be\label{repQ1}
\SSSS_t^{V} \psi(\uu)=\psi(\uu)+\int_0^t \SSSS_\tau^{V} g(\uu) \dd\tau, \q t\ge0, \, \uu\in \h^\sS
\ee for some $g\in C_b(\h^\sS)$. Then the continuity of the mapping   $t\mapsto \SSSS_t^{V}g(\uu)$ from~$\R_+$ to $\R$ implies  
the following limit   
$$
g (\uu)=\lim_{t\to 0} \frac{ \SSSS_t^{V}\psi (\uu)- \psi (\uu)}{t},
$$ and proves the uniqueness of $g$ in representation \eqref{repQ1}. 
   We set   ${\mathbb L_V} \psi :=g$.   The proof is based on the following two lemmas.
   \begin{lemma}\label{Lem1} For any $F\in C_b(\h^\sS)$, the following properties hold 
 \begin{enumerate}
 \item[i)] For any $\psi \in \D({\mathbb L_V})$,  we have  $ \varphi_t:=\SSSS_t^{V,F}\psi \in \D({\mathbb L_V})$ and  
 $$
  \partial_t\varphi_t  =  ({\mathbb L_V}+ F)\varphi_t, \q   t>0. 
 $$
  \item[ii)] The set $\D_+:=\left\{\psi\in \D(\mathbb L_V): \inf_{\uu\in\h^\sS}\psi(\uu)>0\right\}$ is  determining for $\ppp(\h^\sS)$, i.e., if $\lag \psi, \sigma_1\rag =\lag \psi, \sigma_2\rag $ for some $\sigma_1,\sigma_2\in \ppp(\h^\sS)$ and any $\psi\in \D_+$, then~$\sigma_1=\sigma_2$.
 \end{enumerate}
    \end{lemma}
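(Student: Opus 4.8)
\textbf{Proof strategy for Lemma~\ref{Lem1}.}
The plan rests on the remark that, after the ground-state transform, $\SSSS_t^V\psi=\la_V^{-t}h_V^{-1}\PPPP_t^V(h_V\psi)$ is a genuine (conservative) Markov semigroup on $C_b(\h^\sS)$: since $\PPPP_t^Vh_V=\la_V^th_V$ one has $\SSSS_t^V\mathbf1=\mathbf1$, and $\SSSS_t^V$ is positivity preserving, hence a contraction for $\|\cdot\|_\infty$; it maps $C_b(\h^\sS)$ into itself (because $\PPPP_t^V$ maps $C_\wwww(\h^\sS)$ into itself and $h_V\in C_+(\h^\sS)$) and inherits the Kolmogorov--Chapman relation and the time-continuity property. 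The second ingredient is the Duhamel identity
\[
\SSSS_t^{V,F}\psi=\SSSS_t^V\psi+\int_0^t\SSSS_{s}^V\bigl(F\cdot\SSSS_{t-s}^{V,F}\psi\bigr)\dd s,\qquad \psi\in C_b(\h^\sS),
\]
which I would obtain from the Feynman--Kac representation $\PPPP_t^{V+F}(h_V\psi)(\uu)=\E_\uu\bigl[(h_V\psi)(\uu_t)\exp(\int_0^t(V+F)(\uu_r)\dd r)\bigr]$ by inserting the elementary identity $\exp(\int_0^tF(\uu_r)\dd r)=1+\int_0^tF(\uu_s)\exp(\int_s^tF(\uu_r)\dd r)\dd s$, applying the Markov property at time $s$, and renormalising by $\la_V^th_V$.

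\textbf{Part (i).} Let $\psi\in\D({\mathbb L_V})$ and put $\varphi_t=\SSSS_t^{V,F}\psi$ and $w_t:=\int_0^t\SSSS_s^V(F\varphi_{t-s})\dd s$, so that $\varphi_t=\SSSS_t^V\psi+w_t$. From the integral definition of ${\mathbb L_V}$ and the semigroup property one checks at once that $\SSSS_t^V\psi\in\D({\mathbb L_V})$ with ${\mathbb L_V}\SSSS_t^V\psi=\SSSS_t^V{\mathbb L_V}\psi$, and that $t\mapsto\SSSS_t^V\psi$ is differentiable with time-continuous derivative $\SSSS_t^V{\mathbb L_V}\psi$. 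Combining the Duhamel identity with the semigroup property of $\SSSS^{V,F}$ yields the time-shift relation
\[
\varphi_{t+\sigma}=\SSSS_\sigma^V\varphi_t+\int_0^\sigma\SSSS_v^V\bigl(F\varphi_{t+\sigma-v}\bigr)\dd v,\qquad t,\sigma\ge0,
\]
and, since $t\mapsto\varphi_t$ is continuous and uniformly bounded on bounded time intervals, the last integral equals $\sigma F\varphi_t+o(\sigma)$. The plan is then to prove that $w_t\in\D({\mathbb L_V})$ — by massaging $\SSSS_\sigma^Vw_t-w_t=\int_0^t\bigl(\SSSS_{\sigma+s}^V-\SSSS_s^V\bigr)(F\varphi_{t-s})\dd s$ into the form $\int_0^\sigma\SSSS_\tau^V(g_t)\dd\tau$ with the help of the time-shift relation and a short Gronwall-type closing argument — whence $\varphi_t=\SSSS_t^V\psi+w_t\in\D({\mathbb L_V})$ and $\partial_t\varphi_t={\mathbb L_V}\varphi_t+F\varphi_t$. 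This last step, reconciling the convolution term with the purely pointwise/integrated notion of ${\mathbb L_V}$ (there is no strong continuity on $C_b(\h^\sS)$ to lean on), is the main technical point; everything else is routine.

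\textbf{Part (ii).} For $\psi\in C_b(\h^\sS)$ with $c:=\inf_{\h^\sS}\psi>0$ and a parameter $\lambda>0$, I would consider the resolvent $R_\lambda\psi:=\int_0^\infty e^{-\lambda t}\SSSS_t^V\psi\,\dd t$. Since $\|\SSSS_t^V\psi\|_\infty\le\|\psi\|_\infty$ and $t\mapsto\SSSS_t^V\psi(\uu)$ is continuous for every $\uu\in\h^\sS$, dominated convergence gives $R_\lambda\psi\in C_b(\h^\sS)$; the resolvent identity $\SSSS_\sigma^VR_\lambda\psi=e^{\lambda\sigma}\bigl(R_\lambda\psi-\int_0^\sigma e^{-\lambda s}\SSSS_s^V\psi\,\dd s\bigr)$ shows, in integrated form, that $R_\lambda\psi\in\D({\mathbb L_V})$ with ${\mathbb L_V}R_\lambda\psi=\lambda R_\lambda\psi-\psi$; and $\SSSS_t^V\mathbf1=\mathbf1$ together with positivity give $R_\lambda\psi\ge c/\lambda>0$. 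Hence $R_\lambda\psi\in\D_+$. Now let $\sigma_1,\sigma_2\in\ppp(\h^\sS)$ satisfy $\lag\phi,\sigma_1\rag=\lag\phi,\sigma_2\rag$ for all $\phi\in\D_+$. Applying this to $\phi=R_\lambda\psi$ and using Fubini, the Laplace transforms in $\lambda$ of the bounded continuous functions $t\mapsto\lag\SSSS_t^V\psi,\sigma_i\rag$ coincide; by injectivity of the Laplace transform, $\lag\SSSS_t^V\psi,\sigma_1\rag=\lag\SSSS_t^V\psi,\sigma_2\rag$ for all $t\ge0$, and taking $t=0$ gives $\lag\psi,\sigma_1\rag=\lag\psi,\sigma_2\rag$. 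Since every $\psi\in C_b(\h^\sS)$ is a difference of two functions with strictly positive infimum (namely $\psi+\|\psi\|_\infty+1$ and the constant $\|\psi\|_\infty+1$) and $\sigma_1,\sigma_2$ are probability measures, this equality extends to all $\psi\in C_b(\h^\sS)$, so $\sigma_1=\sigma_2$.
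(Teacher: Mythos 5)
Your part (ii) is correct, and it takes a somewhat different route from the paper. The paper manufactures elements of $\D_+$ by Ces\`aro averaging: for $\psi\in C_b(\h^\sS)$ with $\inf_{\h^\sS}\psi>0$ it checks that $\tilde\varphi_r:=\frac1r\int_0^r\SSSS_\tau^V\psi\,\dd\tau$ lies in $\D_+$ (via the identity $\SSSS_t^V\tilde\varphi_r-\tilde\varphi_r=\int_0^t\SSSS_\tau^V\bigl(r^{-1}(\SSSS_r^V\psi-\psi)\bigr)\dd\tau$, the exact analogue of your resolvent identity), and then lets $r\to0$, using pointwise time-continuity and dominated convergence to pass from $\lag\tilde\varphi_r,\sigma_1\rag=\lag\tilde\varphi_r,\sigma_2\rag$ to $\lag\psi,\sigma_1\rag=\lag\psi,\sigma_2\rag$. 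You instead use the resolvent $R_\lambda\psi$ together with injectivity of the Laplace transform; your verification that $R_\lambda\psi\in\D(\mathbb L_V)$ with $\mathbb L_V R_\lambda\psi=\lambda R_\lambda\psi-\psi$ is sound, as is the final reduction from arbitrary $\psi\in C_b(\h^\sS)$ to functions with positive infimum. The paper's variant is marginally more elementary (no inversion theorem is needed, the limit $r\to0$ does the job directly), but both arguments work.

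Part (i) is where there is a genuine gap. Your Duhamel identity and its Feynman--Kac derivation are exactly the paper's, but the Duhamel formula is not the assertion of i): the content of i) is precisely that $\varphi_t\in\D(\mathbb L_V)$ and $\partial_t\varphi_t=(\mathbb L_V+F)\varphi_t$, and this is the step you leave as a ``plan'' (``massaging'' $\SSSS_\sigma^Vw_t-w_t$ into the form $\int_0^\sigma\SSSS_\tau^Vg_t\,\dd\tau$ plus ``a short Gronwall-type closing argument''), which you yourself flag as the main technical point; as written, nothing beyond the Duhamel formula is actually proved. Moreover, even the preparatory claim $\int_0^\sigma\SSSS_v^V(F\varphi_{t+\sigma-v})\dd v=\sigma F\varphi_t+o(\sigma)$ needs an argument: it requires joint continuity of $(v,s)\mapsto\SSSS_v^V(F\varphi_s)(\uu)$, and since the semigroup is not strongly continuous on $C_b(\h^\sS)$ (only pointwise time-continuity is available), separate continuity in $v$ and in $s$ does not suffice without an extra justification. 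For comparison, the paper closes i) without any differentiation or Gronwall step: it stays in the integrated (mild) formulation, observing that for $\psi\in\D(\mathbb L_V)$ with $g=\mathbb L_V\psi$ the definition of $\D(\mathbb L_V)$ and the semigroup property give
\[
\SSSS_t^V(\SSSS_r^V\psi)(\uu)=\SSSS_r^V\psi(\uu)+\int_0^t \SSSS_\tau^V(\SSSS_r^Vg)(\uu)\,\dd\tau ,
\]
so that $\D(\mathbb L_V)$ is stable under the semigroup with $\mathbb L_V\SSSS_r^V\psi=\SSSS_r^Vg$, and this integrated identity is combined with the Duhamel relation to handle $\SSSS_t^{V,F}\psi$. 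You should either carry out your Gronwall scheme in full or, more simply, close the argument at the level of such integrated identities, as the paper does.
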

    This lemma is proved at the end of this subsection.  The next result is established exactly in the same way as Lemma~5.9 in~\cite{JNPS-2014}, by using limit~\eqref{a1.5};    we omit its proof.
    \begin{lemma}\label{Lem2} 
    The Markov semigroup  $\SSSS_t^{V}$
has a unique stationary measure, which is
given by $\nu_V=h_V\mu_V$.
    \end{lemma}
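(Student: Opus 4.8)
The plan is to recognise $\SSSS_t^V=\SSSS_t^{V,0}$ as the Doob ground-state transform of the Feynman--Kac semigroup $\PPPP_t^V$ by its positive eigenvector $h_V$, and to read off existence and uniqueness of its stationary measure from the Convergence part of Theorem~\ref{T:1.1}. First I would check that $\SSSS_t^V$ is a genuine Markov semigroup acting on $C_b(\h^\sS)$: it is positivity-preserving because $\PPPP_t^V$ is and $h_V>0$ on $\h^\sS$; it preserves constants, $\SSSS_t^V\mathbf1=\lambda_V^{-t}h_V^{-1}\PPPP_t^Vh_V=\mathbf1$, by the eigenrelation $\PPPP_t^Vh_V=\lambda_V^th_V$; and it is well defined on $C_b(\h^\sS)$ because $h_V\in C_\wwww(\h^\sS)$ forces $h_V\psi\in C_\wwww(\h^\sS)$ for every bounded $\psi$, while $\PPPP_t^V$ maps $C_\wwww(\h^\sS)$ into itself and $h_V^{-1}\in C_+(\h^\sS)$.

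Second, I would verify that $\nu_V:=h_V\mu_V$ is a stationary probability measure. It is a probability measure carried by $\h^\sS$ because $\mu_V$ charges $\h^\sS$ (by \eqref{momentestimate}) and $\lag h_V,\mu_V\rag=1$. Stationarity is the one-line computation
\[
\lag\SSSS_t^V\psi,\nu_V\rag=\lambda_V^{-t}\lag\PPPP_t^V(h_V\psi),\mu_V\rag=\lambda_V^{-t}\lag h_V\psi,\PPPP_t^{V*}\mu_V\rag=\lag h_V\psi,\mu_V\rag=\lag\psi,\nu_V\rag,
\]
using $\PPPP_t^{V*}\mu_V=\lambda_V^t\mu_V$ and the cancellation of the factor $h_V$ in $\nu_V$ against the $h_V^{-1}$ in $\SSSS_t^V$.

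Third, for uniqueness let $\nu\in\ppp(\h^\sS)$ be an arbitrary stationary measure of $\SSSS_t^V$ (any such $\nu$ automatically lives on $\h^\sS$, since $\SSSS_t^V$ is built from the kernels $P_t^V(\uu,\cdot)$, which are concentrated on $\h^\sS$ for $\uu\in\h^\sS$). Apply \eqref{a1.5} to $h_V\psi\in C_\wwww(\h^\sS)$: it gives $\lambda_V^{-t}\PPPP_t^V(h_V\psi)\to\lag h_V\psi,\mu_V\rag h_V$ in $C_b(X_R)$ for every $R\ge1$, hence, since $\h^\sS=\bigcup_R X_R$,
\[
\SSSS_t^V\psi(\uu)=\lambda_V^{-t}h_V(\uu)^{-1}\PPPP_t^V(h_V\psi)(\uu)\longrightarrow\lag h_V\psi,\mu_V\rag=\lag\psi,\nu_V\rag,\qquad\uu\in\h^\sS .
\]
Since $\|\SSSS_t^V\psi\|_\infty\le\|\psi\|_\infty$ by the Markov property and $\nu$ is a probability measure, the dominated convergence theorem and stationarity give $\lag\psi,\nu\rag=\lag\SSSS_t^V\psi,\nu\rag\to\lag\psi,\nu_V\rag$, so $\lag\psi,\nu\rag=\lag\psi,\nu_V\rag$ for all $\psi\in C_b(\h^\sS)$ and therefore $\nu=\nu_V$.

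The argument is short once Theorem~\ref{T:1.1} is in hand, and the only genuinely delicate step is this last passage to the limit: \eqref{a1.5} is stated as uniform convergence on the exhausting compacts $X_R$ rather than globally on $\h^\sS$, so one first has to extract pointwise convergence of $\SSSS_t^V\psi$ on all of $\h^\sS$, and then the constant bound coming from the Markov property (rather than a tightness estimate) is what licenses integration against the probability measure $\nu$. Everything else --- the Markov property of $\SSSS_t^V$, stationarity of $h_V\mu_V$, and the fact that $C_b(\h^\sS)$ is measure-determining --- is routine bookkeeping.
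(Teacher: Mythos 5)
Your proof is correct and follows essentially the route the paper intends: the paper omits the argument, stating that it goes exactly as Lemma~5.9 of \cite{JNPS-2014} by using limit~\eqref{a1.5}, and your verification (Markov property of $\SSSS_t^V$ via the eigenrelation $\PPPP_t^Vh_V=\lambda_V^t h_V$, stationarity of $h_V\mu_V$ from $\PPPP_t^{V*}\mu_V=\lambda_V^t\mu_V$, and uniqueness by applying \eqref{a1.5} to $h_V\psi$ together with dominated convergence against any stationary $\nu$) is precisely that argument in continuous time.
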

   \medskip

{\it Step~2}.  
  Let us show that, for any $\psi \in \D_+$, we have
    \be\label{Qhav1}
Q^V_R(F_\psi)=0,
\ee  where $F_\psi:= -\mathbb L_V\psi/ \psi \in C_b(\h^s)$ and $Q^V_R(F_\psi)$ is defined by \eqref{Qhav0}. Indeed, by property i) in Lemma~\ref{Lem1}, the function
   $\varphi_t= \SSSS_t^{V,F_\psi}\psi$ satisfies 
   $$
     \partial_t\varphi_t  = \left(\mathbb L_V  - \frac{\mathbb L_V \psi}{\psi}\right)\varphi_t, \q \varphi_0=\psi.
   $$
From the uniqueness of the solution we derive that $\psi=\varphi_t$ for any $t\ge0$, hence
\be\label{Qhav3}
\lim_{t\to+\ty} \frac{1}{t}\log\sup_{\uu \in X_R}\log(\SSSS_t^{V,F_\psi}{\psi })(\uu)=0.
\ee As   $c\le\psi(\uu)\le C$ for any $\uu\in \h^\sS$ and some  constants $C,c>0$, we have
$$
Q^V_R(F_\psi)\le \limsup_{t\to+\ty} \frac{1}{t}\log\sup_{\uu \in X_R}\log(\SSSS_t^{V,F_\psi}{\psi })(\uu)\le Q^V_R(F_\psi).
$$Combining this with \eqref{Qhav3}, we obtain  \eqref{Qhav1}.

\medskip
{\it Step~3}. 
Let us assume\,\footnote{As $I_R$ defined by \eqref{I_R} is a good rate function, the set of equilibrium measures for  $V$ is non-empty. So 
  the set of zeros of $ I^V_R$ is  also non-empty, by the remark made at the end of Step 2 of the proof of Theorem~\ref{T:MT}.} that $ I^V_R(\sigma)=0$. Then $\sigma\in \ppp(\h^\sS)$ and 
$$
 0=I^V_R(\sigma)=\sup_{F\in C_b(\h^\sS)}\bigl(\lag F, \sigma\rag-Q^V_R(F)\bigr).
$$So taking here $F=F_\psi$ for any $\psi\in \D_+$ and using the result of Step 2, we get 
$$
0\le  \inf_{\psi\in \D_+} \int_{\h^\sS}  \frac{\mathbb L_V \psi}{\psi}\sigma(\ddd \uu).
$$ Since  $\SSSS_t^{V}$ is a   Markov semigroup, we have  
 $\mathbb L_V{\mathbf1}={\mathbf0} $.  We see that $\theta=0$ is a local minimum of the function
$$
f(\theta):= \int_{\h^\sS}  \frac{\mathbb L_V (1+\theta\psi)}{1+\theta \psi}\sigma(\ddd \uu)
$$for any $\psi\in \D_+$, so
$$
0=f'(0)= \int_{\h^\sS}    \mathbb L_V  \psi  \,  \sigma(\ddd \uu).
$$Combining this with property i) in Lemma~\ref{Lem1}, we obtain 
$$
  \int_{\h^\sS}    \SSSS_t^{V}  \psi  \,  \sigma(\ddd \uu)= \int_{\h^\sS}      \psi  \,  \sigma(\ddd \uu), \q t>0.
$$From  ii) in Lemma~\ref{Lem1}, we derive that  $\sigma$ is a stationary measure for    $ \SSSS_t^{V}$, and 
Lemma~\ref{Lem2} implies that $\sigma=h_V\mu_V$. This completes the proof of Proposition~\ref{P:JNPS}.
\begin{proof}[Proof of Lemma~\ref{Lem1}]
{\it Step~1: Property  i)}. 
Let us  show that, for any~$\psi\in C_b({\h^\sS})$,   the function $ \varphi_t=\SSSS_t^{V,F}\psi $ satisfies the equation in the Duhamel form 
\be\label{Duhamel}
\varphi_t =  \SSSS_t^{V}\psi+\int_0^t \SSSS_{t-s}^{V}(F \varphi_s) \dd s.
\ee Indeed, we have
\begin{align*} 
\varphi_t  &- \SSSS_t^{V}\psi= \la_V^{-t}h_V^{-1}\nonumber\\
&\times\E_\uu \left\{ \exp\left(\int_0^t V(\uu_\tau) \dd \tau\right) \!\left[ \exp\left(\int_0^t  F(\uu_\tau)\dd \tau\right) -1\right]  \! h_V(\uu_t)\psi(\uu_t)\right\}. 
\end{align*} Integrating by parts and using the
  the Markov property, we get   
\begin{align*} 
&\varphi_t   - \SSSS_t^{V}\psi= \la_V^{-t}h_V^{-1}\nonumber\\
&\q\times\int_0^t\E_\uu \left\{ \exp\left(\int_0^t V(\uu_\tau) \dd \tau\right) \!\left[F(\uu_s) \exp\left(\int_s^tF(\uu_\tau)\dd \tau\right)\right]  \! h_V(\uu_t)\psi(\uu_t)\right\}\ddd s\\
&= \int_0^t  \la_V^{-s}h_V^{-1} \E_\uu \left\{ \exp\left(\int_0^s V(\uu_\tau) \dd \tau\right) h_V(\uu_s) F(\uu_s)  \varphi_{t-s} (\uu_s) \right\} \ddd s \\&
=    \int_0^t   \SSSS_s^{V}(F \varphi_{t-s}) \dd s=\int_0^t \SSSS^{V}_{t-s}(F \varphi_s) \dd s.
\end{align*} This proves \eqref{Duhamel}. The identity
$$
\SSSS_t^{V} (\varphi_r)(\uu)=\varphi_{r+t}(\uu)=\varphi_r(\uu)+\int_0^t \SSSS_{\tau}^{V}(  \SSSS_{r}^{V}g)(\uu) \dd\tau, \q t\ge0, \, \uu\in \h^\sS
$$
 shows that $\varphi_r\in \D({\mathbb L_V})$ for $\psi\in \D({\mathbb L_V})$ and $r>0$.     
  \medskip

{\it Step~2: Property   ii)}. Assume  that,   for some       $\sigma_1,\sigma_2\in \ppp(\h^\sS)$, we have
   \be\label{Qhav6}
\lag \psi, \sigma_1\rag =\lag \psi, \sigma_2\rag, \q \psi\in \D_+.
\ee    
Let us take any  $\psi\in C_b(\h^\sS)$ such that $c\le \psi(\uu)\le C$ for  any $\uu\in \h^\sS$ and some constants  $c,C>0$. Then $\tilde \varphi_r:=\frac1r\int_0^r \SSSS_\tau^{V} \psi \dd \tau$ belongs to~$\D_+$ for any $r>0$. Indeed, the inequality
   $c\le \tilde \varphi_r(\uu)\le C $ follows immediately from  the definition of~$\SSSS_r^{V}$, and the fact that $\tilde \varphi_r\in \D({\mathbb L_V})$ follows from the identity   
   \begin{align*}
 \SSSS^V_t\tilde \varphi_r-\tilde \varphi_r&= \f1r\int_0^r (\SSSS^V_{\tau+t}\psi-\SSSS^V_\tau\psi)\dd\tau= \f1r\int_r^{r+t}\SSSS^V_\tau\psi\dd\tau- \f1r\int_0^{t}\SSSS^V_\tau\psi\dd\tau\\
&= \int_0^t \SSSS^V_\tau \left( \frac{\SSSS^V_r\psi-\psi}{r} \right)\dd \tau .
\end{align*}
   Then, by \eqref{Qhav6},   we have 
   \be\label{Qhav5}
   \lag \tilde\varphi_r, \sigma_1\rag =\lag \tilde\varphi_r, \sigma_2\rag, \q r>0.
   \ee Using the continuity of the mapping $r\mapsto \SSSS_r^{V}\psi(\uu)$ from $\R_+$ to $\R$, we see that~$\tilde\varphi_r(\uu)\to \psi(\uu)$ as $r\to0$. Passing to the limit in \eqref{Qhav5} and using the Lebesgue theorem on dominated convergence, we obtain 
   $
   \lag \psi, \sigma_1\rag =\lag \psi, \sigma_2\rag.  
   $ It is easy to verify that the set $\{\psi\in C_b(\h^\sS): \inf_{\uu\in \h^\sS} \psi(\uu)>0 \}$ is   determining, so we get $\sigma_1=\sigma_2$.

\end{proof}

 \subsubsection*{Proof of Lemma \ref{e51}} 
 The function $f:J\to \R$ is convex, so the derivatives $D^\pm f(x)$ exist for any~$x\in J$.
We confine ourselves to the derivation of the first inequality in the lemma. Assume the opposite, and let $x_0\in J$, $(n_k)\subset\nn$, and $\eta>0$  be such that
\be\label{e50}
  D^+f_{n_k}(x_0)\ge D^+f(x_0)+\eta\q\text{ for }k\ge 1.
\ee
 Let us fix $x_1\in J$, $x_1> x_0$ such that
 $$
  D^+f(x_0)\ge \f{f(x_1)-f(x_0)}{x_1-x_0}-\eta/4.
 $$
 Since $f_{n_k}$ is a convex function, we have
 $$
  D^+f_{n_k}(x_0)\le\f{f_{n_k}(x_1)-f_{n_k}(x_0)}{x_1-x_0}.
 $$
Assume that $k\ge 1$ is so large that we have
$$
|f_{n_k}(x_1)-f(x_1)|+|f_{n_k}(x_0)-f(x_0)|\le \eta(x_1-x_0)/4.
$$
Then, combining last three inequalities, we derive
$$
  D^+f_{n_k}(x_0)\le   D^+f(x_0)+\eta/2,
  $$
  which contradicts \ef{e50} and proves the lemma.

 \subsubsection*{Proof of Lemma \ref{0.14}} 
 
Let us first prove \eqref{0.31}. We take   $p_4=6/{(1+2\sS)}$ the maximal   exponent for which the    Sobolev embedding  $H^{1-\sS} \hookrightarrow L^{p_4} $ holds.
  We choose $p_2$ in such a way that exponents $(p_i)$ are H\"older admissible. It follows that $p_2=6/(5-\rho-2\sS-3\kp)$. Now let $\kp>0$ be so small that $\rho+2 \sS \kp\leq 2$. Then a simple calculation shows that $(1-\kp)p_2\leq 6/(3-2\sS)$, so the  Sobolev embedding implies the first inclusion in     \eqref{0.31}. 

\smallskip
\nt
We now prove \eqref{0.32}. Proceeding   as above, we take $q_4= 6/(1+2\sS)$ and choose $q_2$ such that the exponents $(q_i)$ are H\"older admissible,  i.e., $q_2=6(\rho+2)/(12-(\rho+2)(1+2\sS+3\kp))$. It is easy to check that for~$\kp<1/2-\sS$, we have $(1-\kp)q_2\leq 6$. The Sobolev embedding allows to conclude.

\subsubsection*{Proof of Lemma \ref{0.20}}
In view of inequality \eqref{0.21}, we have
\begin{align*}
\beta^{-1}\f{\ddd}{\ddd t}(1+x)^{\beta}&=(1+x)^{\beta-1}\dt x\leq (1+x)^{\beta-1} (-\al x+gx^{1-\beta}+b)\\
&\leq -\al\,x(1+x)^{\beta-1}+g+b\leq -\f{\al}{2} x^{\beta}+\al+g+b.
\end{align*}
Fixing $t\in [0,T]$ and integrating this inequality over $[0, t]$, we obtain
$$
\beta^{-1}(1+x(t))^{\beta}+\f{\al}{2}\int_0^t x^\beta(\tau)\dd \tau\leq  \beta^{-1}(1+x(0))^{\beta}+\int_0^t (\al+g(\tau)+b(\tau))\dd s,
$$
which implies \eqref{0.28}.

\addcontentsline{toc}{section}{Bibliography}
\def\cprime{$'$} \def\cprime{$'$}
  \def\polhk#1{\setbox0=\hbox{#1}{\ooalign{\hidewidth
  \lower1.5ex\hbox{`}\hidewidth\crcr\unhbox0}}}
  \def\polhk#1{\setbox0=\hbox{#1}{\ooalign{\hidewidth
  \lower1.5ex\hbox{`}\hidewidth\crcr\unhbox0}}}
  \def\polhk#1{\setbox0=\hbox{#1}{\ooalign{\hidewidth
  \lower1.5ex\hbox{`}\hidewidth\crcr\unhbox0}}} \def\cprime{$'$}
  \def\polhk#1{\setbox0=\hbox{#1}{\ooalign{\hidewidth
  \lower1.5ex\hbox{`}\hidewidth\crcr\unhbox0}}} \def\cprime{$'$}
  \def\cprime{$'$} \def\cprime{$'$} \def\cprime{$'$}
\providecommand{\bysame}{\leavevmode\hbox to3em{\hrulefill}\thinspace}
\providecommand{\MR}{\relax\ifhmode\unskip\space\fi MR }
\providecommand{\MRhref}[2]{%
  \href{http://www.ams.org/mathscinet-getitem?mr=#1}{#2}
}
\providecommand{\href}[2]{#2}

\end{document}